\newtheorem{theorem}{Theorem}
\newtheorem{lemma}[theorem]{Lemma}
\newtheorem{proposition}[theorem]{Proposition}
\newtheorem{corollary}[theorem]{Corollary}
\newtheorem{example}[theorem]{Example}
\theoremstyle{exercise}
\newtheorem{exercise}[theorem]{Exercise}
\theoremstyle{definition}
\newtheorem{definition}[theorem]{Definition}
\theoremstyle{remark}
\newtheorem{remark}[theorem]{Remark}
\numberwithin{equation}{section}
\newcommand{\intav}[1]{\mathchoice {\mathop{\vrule width 6pt height 3 pt depth  -2.5pt
\kern -8pt \intop}\nolimits_{\kern -6pt#1}} {\mathop{\vrule width
5pt height 3  pt depth -2.6pt \kern -6pt \intop}\nolimits_{#1}}
{\mathop{\vrule width 5pt height 3 pt depth -2.6pt \kern -6pt
\intop}\nolimits_{#1}} {\mathop{\vrule width 5pt height 3 pt depth
-2.6pt \kern -6pt \intop}\nolimits_{#1}}}
\newcommand{\intavl}[1]{\mathchoice {\mathop{\vrule width 6pt height 3 pt depth  -2.5pt
\kern -8pt \intop}\limits_{\kern -6pt#1}} {\mathop{\vrule width 5pt
height 3  pt depth -2.6pt \kern -6pt \intop}\nolimits_{#1}}
{\mathop{\vrule width 5pt height 3 pt depth -2.6pt \kern -6pt
\intop}\nolimits_{#1}} {\mathop{\vrule width 5pt height 3 pt depth
-2.6pt \kern -6pt \intop}\nolimits_{#1}}}
\newcommand{\R}{\mathbb{R}}
\newcommand{\N}{\mathbb{N}}
\newcommand{\Z}{\mathbb{Z}}
\begin{document}

\title[Lagrange and Markov spectra from the dynamical viewpoint]{The Lagrange and Markov spectra from the dynamical point of view}

\author[Carlos Matheus]{Carlos Matheus}
\address{Carlos Matheus: Universit\'e Paris 13, Sorbonne Paris Cit\'e, LAGA, CNRS (UMR 7539), F-93439, Villetaneuse, France.}
\email{matheus@impa.br}



\date{\today}


\begin{abstract}
This text grew out of my lecture notes for a 4-hours minicourse delivered on October 17 \& 19, 2016 during the research school ``Applications of Ergodic Theory in Number Theory'' -- an activity related to the Jean-Molet Chair project of Mariusz Lema\'nczyk and S\'ebastien Ferenczi -- realized at CIRM, Marseille, France. The subject of this text is the same of my minicourse, namely, the structure of the so-called Lagrange and Markov spectra (with an special emphasis on a recent theorem of C. G. Moreira). 
\end{abstract}

\maketitle

\tableofcontents


\section{Diophantine approximations \& Lagrange and Markov spectra}

\subsection{Rational approximations of real numbers}  

Given a real number $\alpha\in\mathbb{R}$, it is natural to compare the quality $|\alpha-p/q|$ of a rational approximation $p/q\in\mathbb{Q}$ and the size $q$ of its denominator. 

Since any real number lies between two consecutive integers, for every $\alpha\in\mathbb{R}$ and $q\in\mathbb{N}$, there exists $p\in\mathbb{Z}$ such that $|q\alpha-p|\leq 1/2$, i.e. 
\begin{equation}\label{e.pre-Dirichlet}
\left|\alpha-\frac{p}{q}\right|\leq \frac{1}{2q}
\end{equation}

In 1842, Dirichlet \cite{Di} used his famous \emph{pigeonhole principle} to improve \eqref{e.pre-Dirichlet}.

\begin{theorem}[Dirichlet] For any $\alpha\in\mathbb{R}-\mathbb{Q}$, the inequality 
$$\left|\alpha-\frac{p}{q}\right|\leq\frac{1}{q^2}$$ 
has infinitely many rational solutions $p/q\in\mathbb{Q}$.
\end{theorem}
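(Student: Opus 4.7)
The plan is to apply the pigeonhole principle to the fractional parts of multiples of $\alpha$, which gives a single rational solution with any prescribed denominator bound, and then use irrationality of $\alpha$ to bootstrap to infinitely many solutions.

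First I would fix $Q\in\mathbb{N}$ and consider the $Q+1$ fractional parts $\{q\alpha\}\in[0,1)$ for $q=0,1,\dots,Q$. Partitioning $[0,1)$ into the $Q$ half-open intervals $[k/Q,(k+1)/Q)$, $k=0,\dots,Q-1$, the pigeonhole principle produces indices $0\leq q_1<q_2\leq Q$ whose fractional parts lie in the same subinterval. Setting $q=q_2-q_1$ and $p=\lfloor q_2\alpha\rfloor-\lfloor q_1\alpha\rfloor$, this gives $|q\alpha-p|<1/Q$, hence
\[
\left|\alpha-\frac{p}{q}\right|<\frac{1}{qQ}\leq\frac{1}{q^2},
\]
since $1\leq q\leq Q$. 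This furnishes one rational solution for each $Q$.

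Next I would upgrade finitely many solutions to infinitely many. Assume for contradiction that only a finite list $p_1/q_1,\dots,p_N/q_N$ of rationals satisfies the desired inequality. Since $\alpha\notin\mathbb{Q}$, each quantity $|\alpha-p_i/q_i|$ is strictly positive, so one can choose $Q\in\mathbb{N}$ so large that $1/Q<|\alpha-p_i/q_i|$ for every $i=1,\dots,N$. Applying the first step with this value of $Q$ yields a rational $p/q$ with $|\alpha-p/q|<1/(qQ)\leq 1/Q$, which is therefore distinct from each $p_i/q_i$ and still satisfies $|\alpha-p/q|\leq 1/q^2$ — a contradiction.

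There is no real obstacle here: the only subtle point is that irrationality of $\alpha$ is genuinely needed in the second step, since it ensures that the finitely many hypothetical solutions cannot accumulate to $\alpha$ and thus cannot obstruct the pigeonhole output from being new. The pigeonhole step itself is completely elementary once one fixes the correct partition of $[0,1)$ into $Q$ pieces for $Q+1$ points.
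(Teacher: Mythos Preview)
Your proof is correct and follows the same pigeonhole approach as the paper. The only difference is that you explicitly carry out the bootstrap to infinitely many solutions, whereas the paper stops after producing one solution per $Q$ and leaves that step implicit; relatedly, the paper invokes irrationality in the pigeonhole step (to note the $Q+1$ fractional parts are distinct), while you invoke it, more to the point, in the contradiction step to ensure $|\alpha-p_i/q_i|>0$.
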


\begin{proof} Given $Q\in\mathbb{N}$, we decompose the interval $[0,1)$ into $Q$ disjoint subintervals as follows:
$$[0,1)=\bigcup\limits_{j=0}^{Q-1} \left[\frac{j}{Q}, \frac{j+1}{Q}\right)$$
Next, we consider the $Q+1$ distinct\footnote{$\alpha\notin\mathbb{Q}$ is used here} numbers $\{i\alpha\}$, $i=0, \dots, Q$, where $\{x\}$ denotes the \emph{fractional part}\footnote{$\{x\}:=x-\lfloor x\rfloor$ and $\lfloor x\rfloor:=\max\{n\in\mathbb{Z}: n\leq x\}$ is the integer part of $x$.} of $x$. By the \emph{pigeonhole principle}, some interval $\left[\frac{j}{Q}, \frac{j+1}{Q}\right)$ must contain two such numbers, say $\{n\alpha\}$ and $\{m\alpha\}$, $0\leq n < m\leq Q$. It follows that 
$$|\{m\alpha\}-\{n\alpha\}|<\frac{1}{Q},$$
i.e., $|q\alpha-p|<1/Q$ where $0<q:=m-n\leq Q$ and $p:=\lfloor m\alpha\rfloor - \lfloor n\alpha\rfloor$. Therefore, 
$$\left|\alpha-\frac{p}{q}\right|<\frac{1}{qQ}\leq\frac{1}{q^2}$$ 
This completes the proof of the theorem. 
\end{proof}

In 1891, Hurwitz \cite{Hu} showed that Dirichlet's theorem is essentially optimal: 

\begin{theorem}[Hurwitz] For any $\alpha\in\mathbb{R}-\mathbb{Q}$, the inequality 
$$\left|\alpha-\frac{p}{q}\right|\leq\frac{1}{\sqrt{5}q^2}$$ 
has infinitely many rational solutions $p/q\in\mathbb{Q}$. 

Moreover, for all $\varepsilon>0$, the inequality 
$$\left|\frac{1+\sqrt{5}}{2}-\frac{p}{q}\right|\leq\frac{1}{(\sqrt{5}+\varepsilon)q^2}$$ 
has only finitely many rational solutions $p/q\in\mathbb{Q}$. 
\end{theorem}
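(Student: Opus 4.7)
I would prove the two claims by rather different techniques: the approximation bound via continued fractions, and the sharpness via an algebraic-conjugate argument.

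For the first inequality, I would let $p_n/q_n$ denote the continued fraction convergents of $\alpha$, writing $\alpha=[a_0;a_1,a_2,\dots]$ and recalling the classical identity
$$\left|\alpha-\frac{p_n}{q_n}\right|=\frac{1}{q_n^2(\beta_n+\gamma_n)},\qquad \beta_n:=[a_{n+1};a_{n+2},\dots],\quad \gamma_n:=\frac{q_{n-1}}{q_n}=[0;a_n,a_{n-1},\dots,a_1].$$
The goal then reduces to showing that among any three consecutive indices $n-1,n,n+1$, at least one has $\beta_n+\gamma_n>\sqrt{5}$. I would argue by contradiction: if $\beta_i+\gamma_i\le\sqrt{5}$ for $i=n-1,n,n+1$, then from $\beta_{i-1}=a_i+1/\beta_i$ and $1/\gamma_i=a_i+\gamma_{i-1}$ I obtain $1/\beta_{i-1}+\gamma_i=1/(\gamma_i^{-1}-\gamma_{i-1})+\gamma_i$ type relations that ultimately force inequalities of the form $1/\beta_i+1/\gamma_i\le \sqrt{5}$, hence $\beta_i\gamma_i\ge 1/(\sqrt{5}-\beta_i-\gamma_i+\beta_i\gamma_i)$, and a short calculation with the quadratic $x^2-\sqrt{5}\,x+1=0$ pins each $\beta_i,\gamma_i$ to the golden ratio $\phi=(1+\sqrt{5})/2$ or its reciprocal. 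Since $\gamma_{n}$ and $\beta_n$ are irrational (as $\alpha\in\R\setminus\Q$), this is impossible, giving the contradiction and hence infinitely many $n$ with $|\alpha-p_n/q_n|<1/(\sqrt{5}q_n^2)$.

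For the sharpness statement about $\phi=(1+\sqrt{5})/2$, I would exploit the fact that $\phi$ is a root of $X^2-X-1$, whose other root is $\bar\phi=(1-\sqrt{5})/2$, and note the factorization
$$p^2-pq-q^2=(p-q\phi)(p-q\bar\phi).$$
Since $\phi\notin\Q$, the left-hand side is a nonzero integer for every $(p,q)\in\Z\times\N$, so $|p-q\phi|\cdot|p-q\bar\phi|\ge 1$. Now, if $|\phi-p/q|\le 1/((\sqrt{5}+\varepsilon)q^2)$, then $|p-q\phi|\le 1/((\sqrt{5}+\varepsilon)q)$, while
$$|p-q\bar\phi|\le |p-q\phi|+q|\phi-\bar\phi|\le \frac{1}{(\sqrt{5}+\varepsilon)q}+q\sqrt{5}.$$
Multiplying these two estimates yields
$$1\le |p-q\phi|\cdot|p-q\bar\phi|\le \frac{\sqrt{5}}{\sqrt{5}+\varepsilon}+\frac{1}{(\sqrt{5}+\varepsilon)^2 q^2},$$
and since $\sqrt{5}/(\sqrt{5}+\varepsilon)<1$, this inequality fails for all sufficiently large $q$. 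Hence only finitely many $p/q$ can satisfy the bound.

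\textbf{Main obstacle.} The technically delicate part is the three-convergent lemma underlying the first assertion; the constant $\sqrt{5}$ is extremal precisely because equality in the lemma forces the partial quotients to be eventually $1$, i.e. forces $\alpha$ to be equivalent (modulo $GL_2(\Z)$) to $\phi$. The second part, by contrast, is essentially a one-line consequence of the norm form $N(p-q\phi)=p^2-pq-q^2\in\Z\setminus\{0\}$, so I expect no real difficulty there once the algebraic identity is in hand.
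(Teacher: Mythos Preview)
Your overall strategy matches the paper's: for the first part, the paper also proves that among any three consecutive convergents at least one satisfies the $\sqrt{5}$-bound, via a contradiction argument using Perron's identity; for the second part, the paper leaves it as an exercise with exactly the hint you used, namely the factorization $p^2-pq-q^2=(p-q\phi)(p-q\bar\phi)$, and your computation is correct.

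There is, however, a slip in your first-part sketch that inverts the punchline. You write that the contradiction arises because ``$\gamma_n$ and $\beta_n$ are irrational.'' But $\gamma_n=q_{n-1}/q_n$ is \emph{rational}; the contradiction is precisely that the inequalities force $\gamma_n$ to equal $(\sqrt{5}-1)/2$, which is irrational. (By contrast $\beta_n=[a_{n+1};a_{n+2},\dots]$ is indeed irrational, but that is not what drives the contradiction.) Your middle paragraph is also too vague to be a proof: the paper's argument proceeds by first observing that $\alpha_m+\beta_m\le\sqrt{5}<3$ forces $a_k,a_{k+1},a_{k+2}\in\{1,2\}$, then rules out any $a_m=2$ (since then $\alpha_m+\beta_m\ge 2+[0;2,1]=7/3>\sqrt{5}$), concluding $a_k=a_{k+1}=a_{k+2}=1$. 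With the variables $x=1/\alpha_{k+2}$ and $y=\beta_{k+1}=q_{k-1}/q_k\in\mathbb{Q}$, the three inequalities become explicit and each pair squeezes $y$ into $[\tfrac{\sqrt{5}-1}{2},\tfrac{\sqrt{5}+1}{2}]$ from both sides, forcing $y=(\sqrt{5}-1)/2\notin\mathbb{Q}$. You should make these steps explicit rather than gesturing at ``relations that ultimately force'' and the quadratic $x^2-\sqrt{5}x+1=0$.
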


The first part of Hurwitz theorem is proved in Appendix \ref{a.Hurwitz}, while the second part of Hurwitz theorem is left as an exercise to the reader:  

\begin{exercise} Show the second part of Hurwitz theorem. (Hint: use the identity $p^2-pq-q^2 = \left(q\frac{1+\sqrt{5}}{2} - p\right) \left(q\frac{1-\sqrt{5}}{2}-p\right)$ relating $\frac{1+\sqrt{5}}{2}$ and its Galois conjugate $\frac{1-\sqrt{5}}{2}$). 

Moreover, use your argument to give a bound on 
$$\#\left\{\frac{p}{q}\in\mathbb{Q}: \left|\frac{1+\sqrt{5}}{2} - \frac{p}{q} \right|\leq \frac{1}{(\sqrt{5}+\varepsilon)q^2}\right\}$$ in terms of $\varepsilon>0$. 
\end{exercise}

Note that Hurwitz theorem does \emph{not} forbid an improvement of ``$\left|\alpha-\frac{p}{q}\right|\leq \frac{1}{\sqrt{5} q^2}$ has infinitely many rational solutions $p/q\in\mathbb{Q}$'' for \emph{certain} $\alpha\in\mathbb{R}-\mathbb{Q}$. This motivates the following definition:

\begin{definition} The constant 
$$\ell(\alpha) := \limsup\limits_{p, q\to\infty} \frac{1}{|q(q\alpha-p)|}$$ 
is called the \emph{best constant of Diophantine approximation} of $\alpha$. 
\end{definition} 

Intuitively, $\ell(\alpha)$ is the best constant $\ell$ such that $|\alpha-\frac{p}{q}|\leq \frac{1}{\ell q^2}$
has infinitely many rational solutions $p/q\in\mathbb{Q}$. 

\begin{remark} By Hurwitz theorem, $\ell(\alpha)\geq\sqrt{5}$ for all $\alpha\in\mathbb{R}-\mathbb{Q}$ and $\ell(\frac{1+\sqrt{5}}{2})=\sqrt{5}$. 
\end{remark}

The collection of \emph{finite} best constants of Diophantine approximations is the \emph{Lagrange spectrum}: 

\begin{definition} The \emph{Lagrange spectrum} is 
$$L:=\{\ell(\alpha): \alpha\in\mathbb{R}-\mathbb{Q}, \ell(\alpha)<\infty\}\subset\mathbb{R}$$ 
\end{definition}

\begin{remark} Khinchin proved in 1926 a famous theorem implying that $\ell(\alpha)=\infty$ for Lebesgue almost every $\alpha\in\mathbb{R}-\mathbb{Q}$ (see, e.g., Khinchin's book \cite{Kh} for more details).  
\end{remark} 

\subsection{Integral values of binary quadratic forms} 

Let $q(x,y)=ax^2+bxy+cy^2$ be a \emph{binary quadratic form} with real coefficients $a, b, c\in \mathbb{R}$. Suppose that $q$ is \emph{indefinite}\footnote{I.e., $q$ takes both positive and negative values.} with positive \emph{discriminant} $\Delta(q):=b^2-4ac$. What is the smallest value of $q(x,y)$ at non-trivial integral vectors $(x,y)\in\mathbb{Z}^2-\{(0,0)\}$? 

\begin{definition} The \emph{Markov spectrum} is 
$$M:=\left\{\frac{\sqrt{\Delta(q)}}{\inf\limits_{(x,y)\in\mathbb{Z}^2-\{(0,0)\}}|q(x,y)|}\in\mathbb{R}: q \textrm{ is an indefinite binary quadratic form with } \Delta(q)>0\right\}$$
\end{definition}

\begin{remark} A similar Diophantine problem for \emph{ternary} (and $n$-ary, $n\geq 3$) quadratic forms was proposed by Oppenheim in 1929. Oppenheim's conjecture was famously solved in 1987 by Margulis using \emph{dynamics on homogeneous spaces}: the reader is invited to consult Witte Morris book \cite{WM} for more details about this beautiful portion of Mathematics. 
\end{remark}

In 1880, Markov \cite{Ma} noticed a relationship between certain binary quadratic forms and rational approximations of certain irrational numbers. This allowed him to prove the following result:

\begin{theorem}[Markov]\label{t.Markov} $L \cap (-\infty, 3) = M \cap (-\infty, 3) = \{k_1<k_2<k_3<k_4<\dots\}$
 where $k_1=\sqrt{5}$, $k_2=\sqrt{8}$, $k_3=\frac{\sqrt{221}}{5}$, $k_4=\frac{\sqrt{1517}}{13}$, $\dots$ is an explicit increasing sequence of quadratic surds\footnote{I.e., $k_n^2\in\mathbb{Q}$ for all $n\in\mathbb{N}$.} accumulating at $3$. 

In fact, $k_n=\sqrt{9-\frac{4}{m_n^2}}$ where $m_n\in\mathbb{N}$ is the $n$-th Markov number, and a Markov number is the largest coordinate of a Markov triple $(x,y,z)$, i.e., an integral solution of $x^2+y^2+z^2=3xyz$.
\end{theorem}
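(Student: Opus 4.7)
The plan is to reduce both $L\cap(-\infty,3)$ and $M\cap(-\infty,3)$ to a common symbolic/combinatorial problem about bi-infinite sequences of $1$'s and $2$'s, and then solve that problem by relating such sequences to integral solutions of the Markov equation $x^2+y^2+z^2=3xyz$.

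First I would translate both spectra into continued-fraction language. For $\alpha=[a_0;a_1,a_2,\dots]$, the standard estimate
$$\frac{1}{|q_n(q_n\alpha-p_n)|}=[a_{n+1};a_{n+2},\dots]+[0;a_n,a_{n-1},\dots,a_1]+o(1)$$
for the convergents $p_n/q_n$ shows that $\ell(\alpha)=\limsup_{n\to\infty}\lambda_n(\alpha)$, where $\lambda_n(\alpha):=[a_n;a_{n+1},\dots]+[0;a_{n-1},\dots,a_1]$. An analogous computation for indefinite binary quadratic forms $q$ (whose roots have eventually-periodic continued fraction expansions, or are real numbers in general) rewrites $\sqrt{\Delta(q)}/|q(x,y)|$ as the $\sup_n\lambda_n$ of a bi-infinite sequence attached to $q$. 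Thus $L$ and $M$ become, respectively, the set of $\limsup$'s and the set of $\sup$'s of the functional $\lambda_0(\theta):=[\theta_0;\theta_1,\dots]+[0;\theta_{-1},\theta_{-2},\dots]$ on the shift space $\mathbb{N}^{\mathbb{Z}}$.

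Second, I would use the key constraint ``value $<3$''. A direct estimate shows that if any partial quotient $\theta_n\ge 3$, then $\lambda_n(\theta)\ge 3$; moreover, certain short patterns (e.g.\ $\dots 1,2,2,2,\dots$ extended badly) already force $\lambda_n(\theta)\ge 3$. Consequently, any element of $L\cup M$ that lies below $3$ is realized by a bi-infinite sequence $\theta\in\{1,2\}^{\mathbb{Z}}$ whose $\sup_n\lambda_n(\theta)<3$. Since $\limsup\le\sup$ and, by shifting, the $\sup$ in $M$ is always attained along a subsequence of shifts that itself has a $\limsup$ equal to the $\sup$ for these restricted sequences, one obtains the equality $L\cap(-\infty,3)=M\cap(-\infty,3)$.

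Third, I would classify the admissible sequences in $\{1,2\}^{\mathbb{Z}}$ with $\sup\lambda_n<3$ via the Markov/Cohn correspondence. The combinatorial heart of the proof is Frobenius's (or Cohn's) bijection between Markov triples $(a,b,c)$ with $c$ maximal and finite words $w=w(a,b,c)$ in the alphabet $\{1,2\}$, defined recursively by the mutation $(a,b,c)\mapsto(a,c,3ac-b)$ on the Markov tree. Each such word, repeated periodically, gives a bi-infinite sequence $\theta^{(c)}$ whose supremum $\sup_n\lambda_n(\theta^{(c)})$ can be computed explicitly using the matrix representation of continued fractions (Cohn matrices $\bigl(\begin{smallmatrix}1&1\\1&2\end{smallmatrix}\bigr)$ and $\bigl(\begin{smallmatrix}2&1\\1&1\end{smallmatrix}\bigr)$) and equals exactly $\sqrt{9-4/c^2}$. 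A minimality/no-overlap argument on these words then shows that these are all the possibilities, giving the enumeration $k_1<k_2<\dots$ indexed by the Markov numbers $m_n$, with $k_n\to 3$ because $m_n\to\infty$.

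The hard step is the third one: producing the Markov-word dictionary and verifying both that every $\theta\in\{1,2\}^{\mathbb{Z}}$ with $\sup\lambda_n<3$ is (up to shift and reversal) periodic with period a Markov word, and that the supremum is exactly $\sqrt{9-4/c^2}$. This is where the Markov equation $x^2+y^2+z^2=3xyz$ enters nontrivially, via the identity (due to Markov) which expresses $\lambda$ along the distinguished index of the periodic sequence in terms of the largest coordinate $c$ of the corresponding triple. The first two steps are essentially soft reductions; the combinatorial-arithmetic dictionary of step three is the substantial content of Markov's theorem.
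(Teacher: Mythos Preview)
The paper does not actually prove Theorem~\ref{t.Markov}: it is stated as Markov's classical 1880 result, with the reader implicitly referred to Cusick--Flahive \cite{CF} for details. So there is no in-paper proof to compare against; your outline is essentially the standard route (Perron's symbolic reformulation, reduction to $\{1,2\}^{\mathbb{Z}}$, then the Cohn/Markov word dictionary).

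There is, however, a genuine logical gap in your step~2. You claim the equality $L\cap(-\infty,3)=M\cap(-\infty,3)$ already at that stage, with the justification that ``the $\sup$ in $M$ is always attained along a subsequence of shifts that itself has a $\limsup$ equal to the $\sup$.'' That sentence is not an argument: for a generic bi-infinite sequence in $\{1,2\}^{\mathbb{Z}}$ there is no reason the supremum of $\lambda_n$ should be realized as a $\limsup$ along the forward orbit. What actually forces the equality is the output of your step~3: the classification shows that every $\theta\in\{1,2\}^{\mathbb{Z}}$ with $\sup_n\lambda_n(\theta)<3$ is shift-equivalent to a \emph{periodic} sequence (a repeated Markov word), and for periodic $\theta$ one trivially has $m(\theta)=\ell(\theta)$. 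So the inclusion $M\cap(-\infty,3)\subset L$ should be deferred until after the periodicity is established. As written, your step~2 gives only the reduction to the alphabet $\{1,2\}$ and the easy inclusion $L\subset M$.

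A smaller point in the same step: the pattern analysis you allude to (``certain short patterns \dots\ already force $\lambda_n\ge 3$'') is not merely a preliminary pruning but is itself the combinatorial engine that drives the periodicity conclusion in step~3; you should not present it as a separate, softer ingredient.
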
 

\begin{remark} All Markov triples can be deduced from $(1,1,1)$ by applying the so-called \emph{Vieta involutions} $V_1, V_2, V_3$ given by 
$$V_1(x,y,z) = (x',y,z)$$
where $x'=3yz-x$ is the other solution of the second degree equation $X^2-3yzX+(y^2+z^2)=0$, etc. In other terms, all Markov triples appear in \emph{Markov tree}\footnote{Namely, the tree where Markov triples $(x,y,z)$ are displayed after applying permutations to put them in normalized form $x\leq y\leq z$,  and two normalized Markov triples are connected if we can obtain one from the other by applying Vieta involutions.}: 
\begin{figure}[htb!]
\includegraphics[scale=0.4]{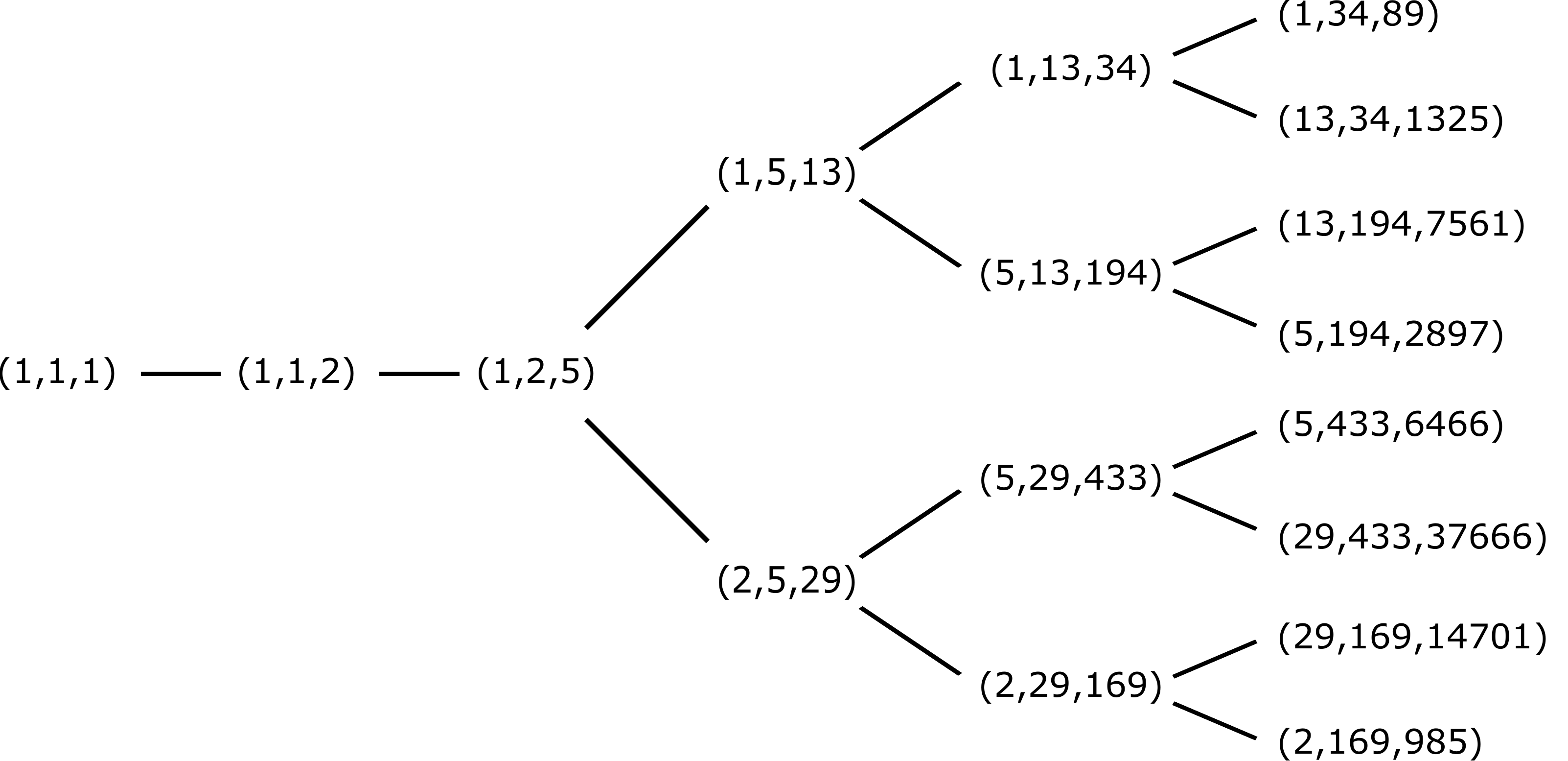}
\end{figure} 
\end{remark}

\begin{remark} For more informations on Markov numbers, the reader might consult Zagier's paper \cite{Za} on this subject. Among many conjectures and results mentioned in this paper, we have:
\begin{itemize}
\item Conjecturally, each Markov number $z$ determines \emph{uniquely} Markov triples $(x,y,z)$ with $x\leq y\leq z$; 
\item If $M(x):=\#\{m \textrm{ Markov number}: m\leq x\}$, then $M(x)=c(\log x)^2 + O(\log x (\log\log x)^2)$ for an \emph{explicit} constant $c\simeq 0.18071704711507...$; conjecturally, $M(x)=c(\log(3x))^2 + o(\log x)$, i.e., if $m_n$ is the $n$-th Markov number (counted with multiplicity), then $m_n\sim \frac{1}{3} A^{\sqrt{n}}$ with $A=e^{1/\sqrt{c}}\simeq 10.5101504...$
\end{itemize} 
\end{remark}

\subsection{Best rational approximations and continued fractions}

The constant $\ell(\alpha)$ was defined in terms of rational approximations of $\alpha\in\mathbb{R}-\mathbb{Q}$. In particular, 
$$\ell(\alpha)=\limsup\limits_{n\to\infty}\frac{1}{|s_n(s_n\alpha-r_n)|}$$
where $(r_n/s_n)_{n\in\mathbb{N}}$ is the sequence of best rational approximations of $\alpha$. Here, $p/q$ is called a \emph{best rational approximation}\footnote{This nomenclature will be justified later by Propositions \ref{p.finding-best} and \ref{p.best-nomenclature} below.} whenever 
$$\left|\alpha-\frac{p}{q}\right|<\frac{1}{2q^2}$$ 

The sequence $(r_n/s_n)_{n\in\mathbb{N}}$ of best rational approximations of $\alpha$ is produced by the so-called \emph{continued fraction algorithm}. 

Given $\alpha=\alpha_0\notin\mathbb{Q}$, we define recursively $a_n=\lfloor\alpha_n\rfloor$ and $\alpha_{n+1} = \frac{1}{\alpha_n-a_n}$ for all $n\in\mathbb{N}$. We can write $\alpha$ as a \emph{continued fraction} 
$$\alpha=a_0+\frac{1}{a_1+\frac{1}{a_2+\frac{1}{\ddots}}}=:[a_0; a_1, a_2,\dots]$$ 
and we denote 
$$\mathbb{Q}\ni\frac{p_n}{q_n}:=a_0+\frac{1}{a_1+\frac{1}{\ddots+\frac{1}{a_n}}}:=[a_0; a_1,\dots, a_n]$$

\begin{remark} L\'evy's theorem \cite{Le} (from 1936) says that $\sqrt[n]{q_n}\to e^{\pi^2/12\log 2}\simeq 3.27582291872...$ for Lebesgue almost every $\alpha\in\mathbb{R}$. By elementary properties of continued fractions (recalled below), it follows from L\'evy's theorem that $\sqrt[n]{|\alpha-\frac{p_n}{q_n}|}\to e^{-\pi^2/6\log 2}\simeq 0.093187822954...$ for Lebesgue almost every $\alpha\in\mathbb{R}$. 
\end{remark}

\begin{proposition}\label{p.pnqn} $p_n$ and $q_n$ are recursively given by 
$$\left\{\begin{array}{cc} p_{n+2} = a_{n+2} p_{n+1} + p_n, & p_{-1} = 1, p_{-2} = 0 \\ q_{n+2} = a_{n+2} q_{n+1} + q_n, & q_{-1} = 0, q_{-2} = 1 \end{array}\right.$$
\end{proposition}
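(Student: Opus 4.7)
The plan is to prove Proposition \ref{p.pnqn} by induction on $n$, using the standard trick of allowing the last ``partial quotient'' to be an arbitrary positive real, not just a positive integer. Concretely, for any real numbers $x_0$ and positive reals $x_1,\dots,x_n$, write $[x_0;x_1,\dots,x_n]$ for the finite continued fraction, and let $P_n,Q_n$ be the quantities defined by the same recurrence $P_{k+2}=x_{k+2}P_{k+1}+P_k$, $Q_{k+2}=x_{k+2}Q_{k+1}+Q_k$ with the stated initial values $P_{-1}=1,P_{-2}=0,Q_{-1}=0,Q_{-2}=1$. I would then show $[x_0;x_1,\dots,x_n]=P_n/Q_n$; specializing to $x_i=a_i\in\mathbb N$ yields the proposition.

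First I would verify the base cases directly from the definition of $[x_0;\dots;x_n]$. For $n=0$, $[x_0]=x_0=x_0\cdot 1+0=P_0/Q_0$ with $P_0=x_0,Q_0=1$. For $n=1$, $[x_0;x_1]=x_0+1/x_1=(x_0x_1+1)/x_1=P_1/Q_1$. Both match the recurrence.

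The inductive step rests on the elementary identity
$$[x_0;x_1,\dots,x_n,x_{n+1}]=\bigl[x_0;x_1,\dots,x_{n-1},x_n+\tfrac{1}{x_{n+1}}\bigr],$$
which is immediate by reading the continued fraction from the bottom up. Assuming the formula $[y_0;y_1,\dots,y_n]=P_n/Q_n$ holds for all admissible tuples of length $n+1$ (in particular it may be applied with the non-integer last entry $y_n=x_n+1/x_{n+1}$), the right-hand side of the identity above becomes
$$\frac{(x_n+1/x_{n+1})P_{n-1}+P_{n-2}}{(x_n+1/x_{n+1})Q_{n-1}+Q_{n-2}}=\frac{x_{n+1}(x_nP_{n-1}+P_{n-2})+P_{n-1}}{x_{n+1}(x_nQ_{n-1}+Q_{n-2})+Q_{n-1}}=\frac{x_{n+1}P_n+P_{n-1}}{x_{n+1}Q_n+Q_{n-1}},$$
using the inductive recurrences for $P_n,Q_n$ to identify the parenthesized sums. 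The resulting expression is exactly $P_{n+1}/Q_{n+1}$ associated to the extended sequence $x_0,\dots,x_{n+1}$, completing the induction.

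The main (and really only) subtle point is the setup: one must phrase the induction for continued fractions with \emph{real} last entry, since in the inductive step the final partial quotient $x_n$ gets replaced by $x_n+1/x_{n+1}\notin\mathbb Z$ in general. Once that formulation is adopted, the recurrence is a direct algebraic manipulation. I would also note in passing, for later use in Proposition \ref{p.finding-best} and related statements, that the same computation with $x_{n+1}$ replaced by $\alpha_{n+1}$ gives the useful identity $\alpha=(\alpha_{n+1}p_n+p_{n-1})/(\alpha_{n+1}q_n+q_{n-1})$.
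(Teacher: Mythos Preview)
Your proof is correct and follows exactly the approach the paper intends: the paper leaves this as an exercise with the hint to use induction together with the identity $[t_0;t_1,\dots,t_n,t_{n+1}]=[t_0;t_1,\dots,t_n+\tfrac{1}{t_{n+1}}]$, which is precisely what you do, including the correct observation that the induction must be formulated for real last entries.
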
 

\begin{proof} Exercise\footnote{Hint: Use induction and the fact that $[t_0; t_1,\dots, t_n, t_{n+1}] = [t_0; t_1,\dots, t_n+\frac{1}{t_{n+1}}]$.}.
\end{proof} 

In other words, we have 
\begin{equation}\label{e.Mobius-fraction}
[a_0; a_1,\dots, a_{n-1}, z] = \frac{zp_{n-1}+p_{n-2}}{zq_{n-1}+q_{n-2}}
\end{equation} 
or, equivalently, 
\begin{equation}\label{e.SL2Z-fraction}
\left(\begin{array}{cc} p_{n+1} & p_n \\ q_{n+1} & q_n \end{array}\right) \cdot 
\left(\begin{array}{cc} a_{n+2} & 1 \\ 1 & 0 \end{array}\right) = 
\left(\begin{array}{cc} p_{n+2} & p_{n+1} \\ q_{n+2} & q_{n+1} \end{array}\right)
\end{equation}

\begin{corollary}\label{c.determinant-fraction} $p_{n+1}q_n-p_nq_{n+1} = (-1)^n$ for all $n\geq 0$.
\end{corollary}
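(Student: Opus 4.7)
The plan is to derive the identity directly from the matrix recursion \eqref{e.SL2Z-fraction}, since $p_{n+1}q_n - p_nq_{n+1}$ is exactly the determinant of the $2\times 2$ matrix of numerators and denominators appearing there. Writing $D_n := p_{n+1}q_n - p_nq_{n+1} = \det\begin{pmatrix} p_{n+1} & p_n \\ q_{n+1} & q_n \end{pmatrix}$, the main observation is that the transition matrix $\begin{pmatrix} a_{n+2} & 1 \\ 1 & 0 \end{pmatrix}$ has determinant $-1$, independently of $a_{n+2}$.

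First I would take determinants of both sides of \eqref{e.SL2Z-fraction}, which (by multiplicativity of the determinant) yields the recursion $D_{n+1} = -D_n$. Iterating gives $D_n = (-1)^n D_0$, so it only remains to evaluate the base case. Using the initial conditions $p_{-1} = 1$, $p_{-2} = 0$, $q_{-1} = 0$, $q_{-2} = 1$ from Proposition \ref{p.pnqn}, one computes $p_0 = a_0$, $q_0 = 1$, $p_1 = a_0 a_1 + 1$, $q_1 = a_1$, whence $D_0 = (a_0 a_1 + 1)\cdot 1 - a_0 \cdot a_1 = 1 = (-1)^0$, as desired.

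As a sanity-check alternative, one can avoid invoking \eqref{e.SL2Z-fraction} and instead argue by induction directly from the scalar recursions in Proposition \ref{p.pnqn}: substituting $p_{n+1} = a_{n+1}p_n + p_{n-1}$ and $q_{n+1} = a_{n+1}q_n + q_{n-1}$ into $p_{n+1}q_n - p_nq_{n+1}$, the $a_{n+1}$ terms cancel and one recovers $-(p_nq_{n-1} - p_{n-1}q_n)$, giving again $D_n = -D_{n-1}$. Either route is completely routine; there is no real obstacle since the statement is essentially the assertion that the continued-fraction convergent matrix lies in $SL_2(\Z)$ up to sign, which is immediate once the recursion is packaged in matrix form.
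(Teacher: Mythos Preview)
Your proof is correct and follows essentially the same approach as the paper: both take determinants in \eqref{e.SL2Z-fraction} and use that $\det\begin{pmatrix} a_{n+2} & 1 \\ 1 & 0 \end{pmatrix} = -1$. You are slightly more explicit than the paper in checking the base case $D_0 = 1$ and in offering the scalar-recursion alternative, but the argument is the same.
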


\begin{proof} This follows from \eqref{e.SL2Z-fraction} because the matrix $\left(\begin{array}{cc}\ast & 1 \\ 1 & 0 \end{array}\right)$ has determinant $-1$. 
\end{proof} 

\begin{corollary}\label{c.Mobius-fraction} $\alpha=\frac{\alpha_n p_{n-1} + p_{n-2}}{\alpha_n q_{n-1} + q_{n-2}}$ and $\alpha_n = \frac{p_{n-2} - q_{n-2}\alpha}{q_{n-1}\alpha-p_{n-1}}$. 
\end{corollary}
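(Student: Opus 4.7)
The plan is to derive both formulas directly from equation \eqref{e.Mobius-fraction}, which expresses the M\"obius action of the partial convergent matrix on a real parameter $z$.

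First I would establish the key identity
$$\alpha = [a_0; a_1, \ldots, a_{n-1}, \alpha_n]$$
by induction on $n$. The base case $n=1$ is simply the defining relation $\alpha = \alpha_0 = a_0 + 1/\alpha_1$, which holds since $\alpha_1 = 1/(\alpha_0 - a_0)$. For the inductive step, assuming $\alpha = [a_0; a_1, \ldots, a_{n-1}, \alpha_n]$, I use the algebraic identity mentioned in the hint to Proposition \ref{p.pnqn}, namely $[t_0; t_1, \ldots, t_n + 1/t_{n+1}] = [t_0; t_1, \ldots, t_n, t_{n+1}]$, together with $\alpha_n = a_n + 1/\alpha_{n+1}$, to conclude $\alpha = [a_0; a_1, \ldots, a_{n-1}, a_n, \alpha_{n+1}]$.

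Next, I would apply \eqref{e.Mobius-fraction} with the real parameter $z := \alpha_n$ (rather than an integer). Since that formula was derived purely from the recursion in Proposition \ref{p.pnqn} and hence is valid for any real $z$, substitution yields the first claimed identity
$$\alpha = \frac{\alpha_n p_{n-1} + p_{n-2}}{\alpha_n q_{n-1} + q_{n-2}}.$$

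Finally, I would solve this linear-fractional relation for $\alpha_n$. Cross-multiplying gives $\alpha \alpha_n q_{n-1} + \alpha q_{n-2} = \alpha_n p_{n-1} + p_{n-2}$, and collecting the $\alpha_n$ terms yields $\alpha_n(\alpha q_{n-1} - p_{n-1}) = p_{n-2} - \alpha q_{n-2}$, so
$$\alpha_n = \frac{p_{n-2} - q_{n-2}\alpha}{q_{n-1}\alpha - p_{n-1}},$$
as desired. The only point requiring minor care is that the division is legitimate because $q_{n-1}\alpha - p_{n-1} \neq 0$ (since $\alpha$ is irrational), and this is the only potential obstacle — but it is immediate. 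No serious difficulty is expected; the corollary is essentially a reformulation of \eqref{e.Mobius-fraction} once one observes that the continued fraction expansion truncated at depth $n$ with the tail replaced by $\alpha_n$ recovers $\alpha$ exactly.
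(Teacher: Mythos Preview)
Your proof is correct and follows essentially the same approach as the paper: apply \eqref{e.Mobius-fraction} with $z=\alpha_n$ using the identity $\alpha=[a_0;a_1,\dots,a_{n-1},\alpha_n]$, then invert. The paper's proof is a one-liner stating exactly this; you have simply spelled out the induction establishing $\alpha=[a_0;a_1,\dots,a_{n-1},\alpha_n]$ and the algebra of the inversion, both of which the paper leaves implicit.
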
 

\begin{proof} This is a consequence of \eqref{e.Mobius-fraction} and the fact that $\alpha =: [a_0; a_1,\dots, a_{n-1},\alpha_n]$.
\end{proof}

The relationship between $\frac{p_n}{q_n}$ and the sequence of best rational approximations is explained by the following two propositions: 

\begin{proposition} $\left|\alpha-\frac{p_n}{q_n}\right|\leq \frac{1}{q_n q_{n+1}} < \frac{1}{a_{n+1} q_n^2}\leq \frac{1}{q_n^2}$ and, moreover, for all $n\in\mathbb{N}$, 
$$\textrm{either } \left|\alpha-\frac{p_n}{q_n}\right| < \frac{1}{2q_n^2} \textrm{ or } \left|\alpha-\frac{p_{n+1}}{q_{n+1}}\right|< \frac{1}{2q_{n+1}^2}.$$
\end{proposition}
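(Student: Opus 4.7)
The plan is to exploit Corollary \ref{c.Mobius-fraction} to get an exact closed-form expression for $\alpha - p_n/q_n$, then read off both inequalities and the alternative from it.

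First I would apply Corollary \ref{c.Mobius-fraction} at level $n+1$ to write
$$\alpha - \frac{p_n}{q_n} = \frac{\alpha_{n+1} p_n + p_{n-1}}{\alpha_{n+1} q_n + q_{n-1}} - \frac{p_n}{q_n} = \frac{p_{n-1}q_n - p_n q_{n-1}}{q_n(\alpha_{n+1} q_n + q_{n-1})}.$$
By Corollary \ref{c.determinant-fraction}, the numerator equals $(-1)^n$, so
$$\left|\alpha - \frac{p_n}{q_n}\right| = \frac{1}{q_n(\alpha_{n+1} q_n + q_{n-1})}. \qquad (\ast)$$
Since $\alpha$ is irrational, $\alpha_{n+1} = a_{n+1} + 1/\alpha_{n+2}$ with $\alpha_{n+2} > 1$, hence $a_{n+1} < \alpha_{n+1} < a_{n+1}+1$. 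Substituting the lower bound $\alpha_{n+1} > a_{n+1}$ into $(\ast)$ yields $\alpha_{n+1}q_n + q_{n-1} > a_{n+1}q_n + q_{n-1} = q_{n+1}$, giving $|\alpha - p_n/q_n| < 1/(q_n q_{n+1})$; dropping the $q_{n-1}$ term gives the intermediate bound $|\alpha - p_n/q_n| < 1/(a_{n+1} q_n^2)$; and $a_{n+1}\geq 1$ gives the last inequality. (The paper's first chain is stated with $\leq$, so this is enough.)

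For the moreover part, the key observation is that the sign $(-1)^n$ in $(\ast)$ shows that $\alpha - p_n/q_n$ and $\alpha - p_{n+1}/q_{n+1}$ have \emph{opposite} signs. Therefore
$$\left|\alpha-\frac{p_n}{q_n}\right| + \left|\alpha-\frac{p_{n+1}}{q_{n+1}}\right| = \left|\frac{p_{n+1}}{q_{n+1}} - \frac{p_n}{q_n}\right| = \frac{|p_{n+1}q_n - p_n q_{n+1}|}{q_n q_{n+1}} = \frac{1}{q_n q_{n+1}},$$
again using Corollary \ref{c.determinant-fraction}. If the conclusion failed, we would have both $|\alpha - p_n/q_n| \geq 1/(2q_n^2)$ and $|\alpha - p_{n+1}/q_{n+1}| \geq 1/(2q_{n+1}^2)$, and adding these would force
$$\frac{1}{q_n q_{n+1}} \geq \frac{1}{2q_n^2} + \frac{1}{2q_{n+1}^2}, \quad \text{i.e.,} \quad (q_{n+1}-q_n)^2 \leq 0,$$
so $q_n = q_{n+1}$ and both inequalities are equalities.

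The main (minor) obstacle is handling this equality case: it would make $\alpha$ rational. Indeed, equality in $(\ast)$ combined with $q_n = q_{n+1}$ forces $\alpha_{n+1} = a_{n+1}$, contradicting $\alpha \notin \mathbb{Q}$ (which makes the continued fraction non-terminating, so $\alpha_{n+1}$ is never an integer). Alternatively, one notes that for $n \geq 1$ the recurrence $q_{n+1} = a_{n+1}q_n + q_{n-1}$ with $a_{n+1} \geq 1$ and $q_{n-1} \geq 1$ gives $q_{n+1} > q_n$ outright, so the equality case can only arise at $n = 0$ with $a_1 = 1$, where it is ruled out by direct inspection (it would force $\alpha = a_0 + 1/2$). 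This finishes the proof.
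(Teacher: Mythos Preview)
Your proof is correct and follows essentially the same route as the paper. The only cosmetic difference is in the first chain of inequalities: the paper argues geometrically that $\alpha$ lies between $p_n/q_n$ and $p_{n+1}/q_{n+1}$ (an interval of length $1/(q_n q_{n+1})$ by Corollary~\ref{c.determinant-fraction}), whereas you extract the exact formula $(\ast)$ and bound $\alpha_{n+1}$ directly; the ``moreover'' part---summing the two errors to $1/(q_nq_{n+1})$ and deriving $(q_{n+1}-q_n)^2\le 0$---is identical, and you actually supply more justification for why $q_n=q_{n+1}$ is impossible than the paper does (it simply says ``a contradiction'').
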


\begin{proof} Note that $\alpha$ belongs to the interval with extremities $p_n/q_n$ and $p_{n+1}/q_{n+1}$ (by Corollary \ref{c.Mobius-fraction}). Since this interval has size 
$$\left|\frac{p_{n+1}}{q_{n+1}} - \frac{p_n}{q_n}\right| = \left|\frac{p_{n+1}q_n - p_n q_{n+1}}{q_nq_{n+1}} \right| = \left|\frac{(-1)^n}{q_n q_{n+1}}\right| = \frac{1}{q_n q_{n+1}}$$
(by Corollary \ref{c.determinant-fraction}), we conclude that $|\alpha-\frac{p_n}{q_n}|\leq \frac{1}{q_n q_{n+1}}$. 

Furthermore, $\frac{1}{q_n q_{n+1}} = |\frac{p_{n+1}}{q_{n+1}}-\alpha| + |\alpha-\frac{p_n}{q_n}|$. Thus, if $$\left|\alpha-\frac{p_n}{q_n}\right| \geq \frac{1}{2q_n^2} \quad \textrm{ and } \quad \left|\alpha-\frac{p_{n+1}}{q_{n+1}}\right|\geq \frac{1}{2q_{n+1}^2},$$
then 
$$\frac{1}{q_n q_{n+1}}\geq \frac{1}{2q_n^2}+\frac{1}{2q_{n+1}^2},$$
i.e., $2q_n q_{n+1}\geq q_n^2+q_{n+1}^2$, i.e., $q_n=q_{n+1}$, a contradiction. 
\end{proof}

In other terms, the sequence $(p_n/q_n)_{n\in\mathbb{N}}$ produced by the continued fraction algorithm contains best rational approximations with frequency at least $1/2$. 

Conversely, the continued fraction algorithm detects \emph{all} best rational approximations: 

\begin{proposition}\label{p.finding-best} If $|\alpha-\frac{p}{q}|<\frac{1}{2q^2}$, then $p/q = p_n/q_n$ for some $n\in\mathbb{N}$. 
\end{proposition}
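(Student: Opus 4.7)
The plan is to prove this classical statement, due to Legendre, by explicitly \emph{constructing} a continued fraction expansion of $\alpha$ in which $p/q$ appears as a convergent; by uniqueness of such an expansion for irrational $\alpha$, this forces $p/q = p_n/q_n$ for the corresponding index $n$.

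First I would normalize $p/q$ to lowest terms with $q \geq 1$, and write it as a finite continued fraction $p/q = [a_0; a_1, \dots, a_n]$. The preliminary point is the identity $[a_0; \dots, a_n] = [a_0; \dots, a_n-1, 1]$ (valid whenever $a_n \geq 2$), which gives two such representations whose indices $n$ have opposite parity. I exploit this flexibility to choose the representation so that $(-1)^n$ equals the sign of $\alpha - p/q$.

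Next, let $p_{n-1}/q_{n-1}$ denote the previous convergent of $p/q$, so that $p_n = p$, $q_n = q$, and $p_n q_{n-1} - p_{n-1} q_n = (-1)^{n-1}$ by Corollary~\ref{c.determinant-fraction}. Define $\omega \in \mathbb{R}$ through the M\"obius parametrization of \eqref{e.Mobius-fraction},
$$\alpha = \frac{\omega p_n + p_{n-1}}{\omega q_n + q_{n-1}}.$$
A direct computation (just the usual cross-multiplication) gives
$$\alpha - \frac{p_n}{q_n} = \frac{(-1)^n}{q_n(\omega q_n + q_{n-1})}.$$
The parity choice forces $\omega q_n + q_{n-1} > 0$, and the hypothesis $|\alpha - p/q| < 1/(2q^2)$ then translates into $\omega q + q_{n-1} > 2q$, yielding $\omega > 2 - q_{n-1}/q_n \geq 1$. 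The final inequality is strict because the input inequality is strict, and because $q_{n-1} \leq q_n$ always holds for the sequences produced by Proposition~\ref{p.pnqn}.

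Since $\omega > 1$ (and $\omega$ is irrational, since $\alpha$ is), it admits a standard infinite continued fraction expansion $\omega = [a_{n+1}; a_{n+2}, \dots]$ with each $a_i \geq 1$. Substituting back,
$$\alpha = [a_0; a_1, \dots, a_n, a_{n+1}, a_{n+2}, \dots]$$
is a valid continued fraction expansion of $\alpha$; by uniqueness of the expansion of an irrational number, it must coincide with the output of the continued fraction algorithm applied to $\alpha$, so $p/q = p_n/q_n$ for the chosen $n$. The main obstacle is the parity bookkeeping: one must select the continued fraction representation of $p/q$ so that $\omega$ comes out positive, and separately verify $q_{n-1} \leq q_n$ in every possible representation (including the mild exceptional case of an integer $p/q$ with $n=1$, $a_1 = 1$, where $q_0 = q_1 = 1$ yet the bound $\omega > 2 - 1 = 1$ still holds strictly). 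Once this is handled, the rest is the M\"obius computation above and the uniqueness of the continued fraction expansion.
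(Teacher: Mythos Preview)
Your argument is correct and is in fact Legendre's classical proof: expand $p/q$ as a finite continued fraction with the parity of $n$ chosen to match the sign of $\alpha-p/q$, show via the M\"obius identity that the tail parameter $\omega$ exceeds $1$, and then extend to a full expansion of $\alpha$, invoking uniqueness. The edge cases (integer $p/q$, the representation ending in $1$) are handled.

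This is, however, a genuinely different route from what the paper intends. The hint asks you to work from the convergents of $\alpha$: locate $q$ in the range $q_{n-1}<q\le q_n$, assume $p/q\neq p_n/q_n$, and reach a contradiction by comparing $|\alpha-p/q|$, $|\alpha-p_n/q_n|$ and $|p/q-p_n/q_n|\ge 1/(qq_n)$ in each of the three cases $q=q_n$, $q_n/2\le q<q_n$, $q<q_n/2$, exactly in the spirit of Proposition~\ref{p.best-nomenclature}. That approach stays entirely within the distance estimates already developed and does not appeal to the uniqueness of the continued fraction expansion of an irrational (which the paper never states explicitly). Your approach, by contrast, is constructive rather than by contradiction, avoids the case split, and actually pins down the index $n$; the price is that it imports the uniqueness statement and requires the parity bookkeeping you carried out.
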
 

\begin{proof} Exercise\footnote{Hint: Take $q_{n-1}< q\leq q_n$, suppose that $p/q\neq p_n/q_n$ and derive a contradiction in each case $q=q_n$, $q_n/2\leq q<q_n$ and $q<q_n/2$ by analysing $|\alpha-\frac{p}{q}|$ and $|\frac{p}{q}-\frac{p_n}{q_n}|$ like in the proof of Proposition \ref{p.best-nomenclature}.}.
\end{proof}

The terminology ``best rational approximation'' is motivated by the previous proposition and the following result: 

\begin{proposition}\label{p.best-nomenclature} For all $q < q_n$, we have $|\alpha-\frac{p_n}{q_n}| < |\alpha-\frac{p}{q}|$. 
\end{proposition}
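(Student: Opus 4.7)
The plan is to prove the (slightly stronger) inequality $|q\alpha-p|>|q_n\alpha-p_n|$ for every pair of integers $(p,q)$ with $0<q<q_n$; dividing by $q$ and using $q<q_n$ then yields $|\alpha-p/q|>|q_n\alpha-p_n|/q>|q_n\alpha-p_n|/q_n=|\alpha-p_n/q_n|$, as required.

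The first step is a change of basis. By Corollary \ref{c.determinant-fraction}, $p_nq_{n-1}-p_{n-1}q_n=\pm 1$, so the matrix $\left(\begin{smallmatrix} p_n & p_{n-1} \\ q_n & q_{n-1}\end{smallmatrix}\right)$ lies in $GL_2(\mathbb{Z})$. Hence there exist unique integers $x,y$ with
\[
p=xp_n+yp_{n-1},\qquad q=xq_n+yq_{n-1},
\]
and consequently $q\alpha-p=x(q_n\alpha-p_n)+y(q_{n-1}\alpha-p_{n-1})$. I will now rule out degenerate cases. If $y=0$, then $q=xq_n$ with $x\in\mathbb{Z}$, which is incompatible with $0<q<q_n$. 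If $x=0$, then $(p,q)=y(p_{n-1},q_{n-1})$, and $|q\alpha-p|=|y|\cdot|q_{n-1}\alpha-p_{n-1}|\ge|q_{n-1}\alpha-p_{n-1}|$; I will then invoke the fact (proved below) that the quantities $|q_k\alpha-p_k|$ strictly decrease in $k$, so that $|q_{n-1}\alpha-p_{n-1}|>|q_n\alpha-p_n|$, closing this case.

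The main case is $x,y$ both nonzero. Here I will argue that $x$ and $y$ must have opposite signs: if they had the same sign, the equality $q=xq_n+yq_{n-1}$ with $q_n,q_{n-1}>0$ would force $|q|\ge q_n+q_{n-1}>q_n$, contradicting $0<q<q_n$. On the other hand, the two errors $q_n\alpha-p_n$ and $q_{n-1}\alpha-p_{n-1}$ themselves have opposite signs, because by Corollary \ref{c.Mobius-fraction} the real number $\alpha$ lies strictly between the consecutive convergents $p_{n-1}/q_{n-1}$ and $p_n/q_n$. Combining these two sign facts, the summands $x(q_n\alpha-p_n)$ and $y(q_{n-1}\alpha-p_{n-1})$ have the \emph{same} sign, so no cancellation occurs:
\[
|q\alpha-p|=|x|\,|q_n\alpha-p_n|+|y|\,|q_{n-1}\alpha-p_{n-1}|>|q_n\alpha-p_n|,
\]
which is the desired inequality.

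The remaining ingredient is the monotonicity $|q_{n-1}\alpha-p_{n-1}|>|q_n\alpha-p_n|$ used twice above. I will deduce this from the recursion of Proposition \ref{p.pnqn}: writing $q_{n+1}\alpha-p_{n+1}=a_{n+1}(q_n\alpha-p_n)+(q_{n-1}\alpha-p_{n-1})$ and using that the signs of $q_k\alpha-p_k$ alternate with $k$, taking absolute values gives $|q_{n-1}\alpha-p_{n-1}|=|q_{n+1}\alpha-p_{n+1}|+a_{n+1}|q_n\alpha-p_n|>|q_n\alpha-p_n|$, since $a_{n+1}\ge 1$ and $\alpha$ is irrational. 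The main obstacle in this argument is really the bookkeeping of signs — in particular, convincing oneself that the two integer coefficients $x,y$ and the two error terms are always aligned so that the decomposition of $q\alpha-p$ cannot produce cancellation; once the sign pattern is in place, the inequality is immediate.
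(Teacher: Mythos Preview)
Your proof is correct and actually establishes the stronger statement that $|q\alpha-p|>|q_n\alpha-p_n|$ for all integers $0<q<q_n$ (best approximation of the \emph{second} kind), from which the proposition follows by dividing through. This is the classical $GL_2(\mathbb{Z})$ basis-change argument one finds in Khinchin or Hardy--Wright.

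The paper takes a different, more geometric route: for $q<q_{n+1}$ and $p/q\neq p_n/q_n$ one has $\left|\frac{p}{q}-\frac{p_n}{q_n}\right|\ge \frac{1}{qq_n}>\frac{1}{q_nq_{n+1}}=\left|\frac{p_{n+1}}{q_{n+1}}-\frac{p_n}{q_n}\right|$, so $p/q$ lies outside the interval with endpoints $p_n/q_n$ and $p_{n+1}/q_{n+1}$; since $\alpha$ lies inside this interval, $p_n/q_n$ is closer to $\alpha$ than $p/q$ is. This argument is shorter and uses only Corollary~\ref{c.determinant-fraction} and the position of $\alpha$ between consecutive convergents, without any sign bookkeeping or the monotonicity of $|q_k\alpha-p_k|$. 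What your approach buys in exchange is the sharper inequality $|q\alpha-p|>|q_n\alpha-p_n|$, which does not follow from the paper's argument. (A minor quibble: you say the monotonicity is ``used twice above'', but it is invoked only in the $x=0$ case.)
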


\begin{proof} If $q<q_{n+1}$ and $p/q\neq p_n/q_n$, then 
$$\left|\frac{p}{q}-\frac{p_n}{q_n}\right|\geq \frac{1}{q q_n} > \frac{1}{q_n q_{n+1}} = \left|\frac{p_{n+1}}{q_{n+1}}-\frac{p_n}{q_n}\right|$$
Hence, $p/q$ does not belong to the interval with extremities $p_n/q_n$ and $p_{n+1}/q_{n+1}$, and so 
$$\left|\alpha-\frac{p_n}{q_n}\right|< \left|\alpha-\frac{p}{q}\right|$$ 
because $\alpha$ lies between $p_n/q_n$ and $p_{n+1}/q_{n+1}$. 
\end{proof}

In fact, the approximations $(p_n/q_n)$ of $\alpha$ are usually quite impressive: 

\begin{example} $\pi=[3; 7, 15, 1, 292, 1, 1, 1, 2, 1, 3, 1, 14, 2, 1, \dots]$ so that 
$$\frac{p_0}{q_0}=3, \quad \frac{p_1}{q_1}=\frac{22}{7}, \quad \frac{p_2}{q_2}=\frac{333}{106}, \quad \frac{p_3}{q_3}=\frac{355}{113}, \quad \dots$$
The approximations $p_1/q_1$ and $p_3/q_3$ are called Yuel\"u and Mil\"u (after Wikipedia) and they are somewhat spectacular: 
$$\left|\pi-\frac{22}{7}\right|<\frac{1}{700}<\left|\pi-\frac{314}{100}\right| \quad \textrm{ and } \quad \left|\pi-\frac{355}{113}\right|<\frac{1}{3,000,000}<\left|\pi-\frac{3141592}{1,000,000}\right|$$
\end{example} 

\subsection{Perron's characterization of Lagrange and Markov spectra}

In 1921, Perron interpreted $\ell(\alpha)$ in terms of Dynamical Systems as follows. 

\begin{proposition}\label{p.Perron} $\alpha-\frac{p_n}{q_n} = \frac{(-1)^n}{(\alpha_{n+1}+\beta_{n+1})q_n^2}$ where $\beta_{n+1}:=\frac{q_{n-1}}{q_n}=[0; a_n, a_{n-1}, \dots, a_1]$. 
\end{proposition}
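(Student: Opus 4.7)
The plan is to derive the identity from Corollary \ref{c.Mobius-fraction} by a direct algebraic manipulation, and then to verify the continued fraction expansion of $\beta_{n+1}$ by a short induction on the recurrence for $q_n$ from Proposition \ref{p.pnqn}.

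First, I would apply Corollary \ref{c.Mobius-fraction} with $n$ replaced by $n+1$ to write
$$\alpha \;=\; \frac{\alpha_{n+1} p_n + p_{n-1}}{\alpha_{n+1} q_n + q_{n-1}}.$$
Subtracting $p_n/q_n$ and clearing the common denominator gives
$$\alpha - \frac{p_n}{q_n} \;=\; \frac{(\alpha_{n+1} p_n + p_{n-1})q_n - p_n(\alpha_{n+1} q_n + q_{n-1})}{q_n(\alpha_{n+1}q_n + q_{n-1})} \;=\; \frac{p_{n-1}q_n - p_n q_{n-1}}{q_n(\alpha_{n+1}q_n + q_{n-1})}.$$
By Corollary \ref{c.determinant-fraction}, the numerator equals $(-1)^n$, and factoring $q_n$ out of the denominator gives exactly $(-1)^n / (q_n^2(\alpha_{n+1} + q_{n-1}/q_n))$. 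Setting $\beta_{n+1} := q_{n-1}/q_n$ yields the claimed formula.

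It remains to show $\beta_{n+1} = [0; a_n, a_{n-1}, \dots, a_1]$, equivalently $q_n/q_{n-1} = [a_n; a_{n-1}, \dots, a_1]$. I would prove this by induction on $n$. The base case $n=1$ gives $q_1/q_0 = a_1/1 = a_1 = [a_1]$ using Proposition \ref{p.pnqn}. For the inductive step, the recurrence $q_n = a_n q_{n-1} + q_{n-2}$ yields
$$\frac{q_n}{q_{n-1}} \;=\; a_n + \frac{q_{n-2}}{q_{n-1}} \;=\; a_n + \frac{1}{q_{n-1}/q_{n-2}},$$
and applying the induction hypothesis to $q_{n-1}/q_{n-2} = [a_{n-1}; a_{n-2}, \dots, a_1]$ produces $q_n/q_{n-1} = [a_n; a_{n-1}, \dots, a_1]$. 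Taking reciprocals gives the desired continued fraction expansion of $\beta_{n+1}$.

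I do not anticipate a serious obstacle: the first part is a one-line algebraic rearrangement once Corollary \ref{c.Mobius-fraction} is shifted by one index, and the second part is a textbook induction exploiting the symmetry of the $q_n$-recurrence (the fact that the same recurrence rule applies whether one reads the partial quotients forward or backward is exactly what produces the reversed continued fraction). The only mild care needed is bookkeeping of indices, in particular making sure the sign $(-1)^n$ comes out correctly from Corollary \ref{c.determinant-fraction} after the swap of $p_{n-1}q_n$ and $p_n q_{n-1}$.
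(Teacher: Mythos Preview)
Your proof is correct and follows essentially the same route as the paper: both arguments combine Corollary~\ref{c.Mobius-fraction} with the determinant identity of Corollary~\ref{c.determinant-fraction}, the only cosmetic difference being that the paper starts from the formula $\alpha_{n+1}=\frac{p_{n-1}-q_{n-1}\alpha}{q_n\alpha-p_n}$ and adds $\beta_{n+1}$, while you start from $\alpha=\frac{\alpha_{n+1}p_n+p_{n-1}}{\alpha_{n+1}q_n+q_{n-1}}$ and subtract $p_n/q_n$. Your additional inductive verification that $q_{n-1}/q_n=[0;a_n,\dots,a_1]$ is a welcome supplement, since the paper merely asserts this equality in the statement without proof.
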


\begin{proof} Recall that $\alpha_{n+1} = \frac{p_{n-1}-q_{n-1}\alpha}{q_n\alpha-p_n}$ (cf. Corollary \ref{c.Mobius-fraction}). Hence, $\alpha_{n+1}+\beta_{n+1} = \frac{p_{n-1}q_n-p_nq_{n-1}}{q_n(q_n\alpha-p_n)} = \frac{(-1)^n}{q_n(q_n\alpha-p_n)}$ (by Corollary \ref{c.determinant-fraction}). This proves the proposition. 
\end{proof}

Therefore, the proposition says that $\ell(\alpha)=\limsup\limits_{n\to\infty}(\alpha_n+\beta_n)$. From the dynamical point of view, we consider the \emph{symbolic space} $\Sigma = (\mathbb{N}^*)^{\mathbb{Z}}=: \Sigma^-\times\Sigma^+ = (\mathbb{N}^*)^{\mathbb{Z}^-}\times (\mathbb{N}^*)^{\mathbb{N}}$ equipped with the left \emph{shift dynamics} $\sigma:\Sigma\to\Sigma$, $\sigma((a_n)_{n\in\mathbb{Z}}):=(a_{n+1})_{n\in\mathbb{Z}}$ and the \emph{height function} $f:\Sigma\to\mathbb{R}$, $f((a_n)_{n\in\mathbb{Z}}) = [a_0; a_1, a_2, \dots] + [0; a_{-1}, a_{-2}, \dots]$. Then, the proposition above implies that 
$$\ell(\alpha) = \limsup\limits_{n\to+\infty} f(\sigma^n(\underline{\theta}))$$
where $\alpha=[a_0; a_1, a_2, \dots]$ and $\underline{\theta} = (\dots, a_{-1}, a_0, a_1, \dots)$. In particular, 
\begin{equation}\label{e.Perron-Lagrange}
L = \{\ell(\underline{\theta}): \underline{\theta}\in\Sigma, \ell(\underline{\theta})<\infty\} 
\end{equation} 
where $\ell(\underline{\theta}):=\limsup\limits_{n\to+\infty} f(\sigma^n(\underline{\theta}))$. 

Also, the Markov spectrum has a \emph{similar description}: 
\begin{equation}\label{e.Perron-Markov}
M=\{m(\underline{\theta}): \underline{\theta}\in\Sigma, m(\underline{\theta})<\infty\}
\end{equation} 
where $m(\underline{\theta}):=\sup\limits_{n\in\mathbb{Z}} f(\sigma^n(\underline{\theta}))$. 

\begin{remark}\label{r.Perron-Gauss} A \emph{geometrical interpretation} of $\sigma:\Sigma\to\Sigma$ is provided by the so-called \emph{Gauss map}\footnote{From Number Theory rather than Differential Geometry.}:
\begin{equation}\label{e.Gauss-map} 
G(x)=\left\{\frac{1}{x}\right\}
\end{equation}
for $0<x\leq 1$. 

\begin{figure}[htb!]
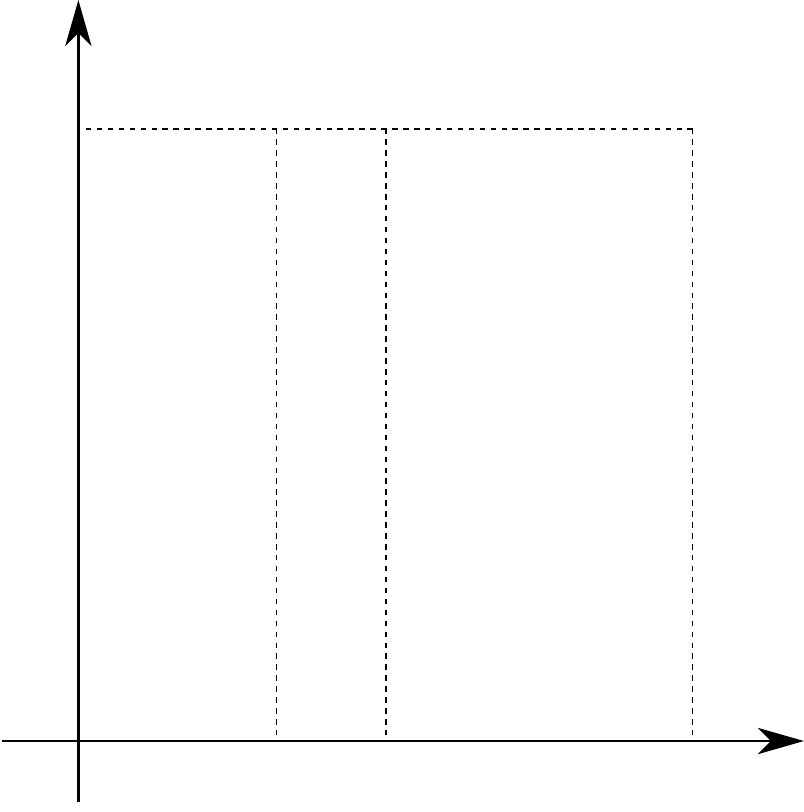
\end{figure}

Indeed, $G([0; a_1, a_2, \dots]) = [0; a_2, \dots]$, so that $\sigma:\Sigma\to\Sigma$ is a symbolic version of the \emph{natural extension} of $G$. 

Furthermore, the identification $(\dots, a_{-1}, a_0, a_1, \dots)\simeq ([0; a_{-1}, a_{-2}, \dots], [a_0; a_1, a_2, \dots]) = (y,x)$ allows us to write the height function as $f((a_n)_{n\in\mathbb{Z}}) = x+y$. 
\begin{figure}[htb!]
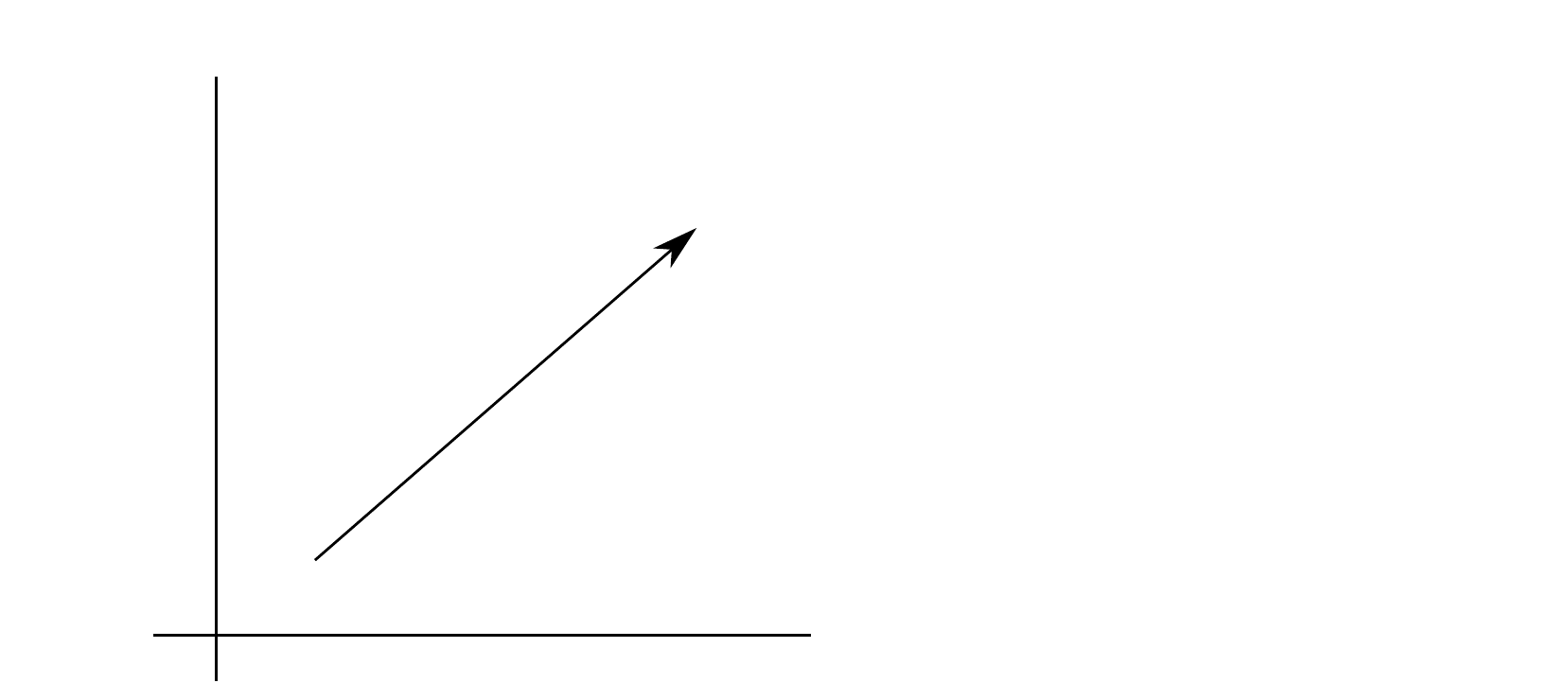
\end{figure}
\end{remark}

Perron's dynamical interpretation of the Lagrange and Markov spectra is the starting point of many results about $L$ and $M$ which are not so easy to guess from their definitions: 

\begin{exercise} Show that $L\subset M$ are closed subsets of $\mathbb{R}$. 
\end{exercise}

\begin{remark}\label{r.Freiman-M-L} $M-L\neq\emptyset$: for example, Freiman \cite{Fr68} proved in 1968 that 
$$s=\overline{221221122}11\overline{221122122}\in(\mathbb{N}^*)^{\mathbb{Z}}$$
has the property that $3.118120178\simeq m(s)\in M-L$. (Here $\overline{\theta_1\dots\theta_n}$ means infinite repetition of the block $\theta_1\dots\theta_n$.) 

Also, Freiman \cite{Fr73} showed in 1973 that $m(s_n)\in M-L$ and $m(s_n)\to m(s_{\infty})\simeq 3.293044265 \in M-L$ where 
$$s_n = \overline{2221121}\underbrace{22\dots 22}_{n \textrm{ times}}121122212\overline{1122212}$$
for $n\geq 4$, and 
$$s_{\infty} = \overline{2}121122212\overline{1122212}$$
\end{remark}

\subsection{Digression: Lagrange spectrum and cusp excursions on the modular surface} 

The Lagrange spectrum is related to the values of a certain height function $H$ along the orbits of the geodesic flow $g_t$ on the (unit cotangent bundle to) the modular surface: indeed, we will show that 
$$L=\{\limsup\limits_{t\to+\infty} H(g_t(x))<\infty: x \textrm{ is a unit cotangent vector to the modular surface}\}$$

\begin{remark} This fact is not surprising to experts: the Gauss map appears naturally by quotienting out the weak-stable manifolds of $g_t$ as observed by Artin, Series, Arnoux, ... (see, e.g., \cite{Ar}).
\end{remark}

An \emph{unimodular lattice} in $\mathbb{R}^2$ has the form $g(\mathbb{Z}^2)$, $g\in SL(2,\mathbb{Z})$, and the stabilizer in $SL(2,\mathbb{R})$ of the standard lattice $\mathbb{Z}^2$ is $SL(2,\mathbb{Z})$. In particular, the space of unimodular lattices in $\mathbb{R}^2$ is $SL(2,\mathbb{R})/SL(2,\mathbb{Z})$. 

As it turns out, $SL(2,\mathbb{R})/SL(2,\mathbb{Z})$ is the unit cotangent bundle to the \emph{modular surface} $\mathbb{H}/SL(2,\mathbb{Z})$ (where $\mathbb{H}=\{z\in\mathbb{C}: \textrm{Im}(z)>0\}$ is the hyperbolic upper-half plane and $\left(\begin{array}{cc} a & b \\ c & d \end{array}\right)\in SL(2,\mathbb{R})$ acts on $z\in\mathbb{H}$ via $\left(\begin{array}{cc} a & b \\ c & d \end{array}\right)\cdot z = \frac{az+b}{cz+d}$). 

The \emph{geodesic flow} of the modular surface is the action of $g_t=\left(\begin{array}{cc} e^t & 0 \\ 0 & e^{-t} \end{array}\right)$ on $SL(2,\mathbb{R})/SL(2,\mathbb{Z})$. The \emph{stable} and \emph{unstable manifolds} of $g_t$ are the orbits of the \emph{stable} and \emph{unstable horocycle flows} $h_s=\left(\begin{array}{cc} 1 & 0 \\ s & 1 \end{array}\right)$ and $u_s =\left(\begin{array}{cc} 1 & s \\ 0 & 1 \end{array}\right)$: indeed, this follows from the facts that $g_t h_s = h_{s e^{-2t}} g_t$ and $g_t u_s = u_{s e^t} g_t$. 

The set of \emph{holonomy} (or \emph{primitive}) \emph{vectors} of $\mathbb{Z}^2$ is 
$$\textrm{Hol}(\mathbb{Z}^2):=\{(p,q)\in\mathbb{Z}^2: \textrm{gcd}(p,q)=1\}$$
In general, the set $\textrm{Hol}(X)$ of holonomy vectors of $X=g(\mathbb{Z}^2)$, $g\in SL(2,\mathbb{Z})$, is 
$$\textrm{Hol}(X):=g(\textrm{Hol}(\mathbb{Z}^2))\subset \mathbb{R}^2$$

The \emph{systole} $\textrm{sys}(X)$ of $X=g(\mathbb{Z}^2)$ is 
$$\textrm{sys}(X) := \min\{\|v\|_{\mathbb{R}^2}: v\in\textrm{Hol}(X)\}$$ 

\begin{remark} By Mahler's compactness criterion \cite{Mahler}, $X\mapsto \frac{1}{\textrm{sys}(X)}$ is a proper function on $SL(2,\mathbb{R})/SL(2,\mathbb{Z})$. 
\end{remark}

\begin{remark} For later reference, we write $\textrm{Area}(v):=|\textrm{Re}(v)|\cdot |\textrm{Im}(v)|$ for the area of the rectangle in $\mathbb{R}^2$ with diagonal $v=(\textrm{Re}(v), \textrm{Im}(v))\in \mathbb{R}^2$. 
\end{remark}

\begin{proposition} The forward geodesic flow orbit of $X\in SL(2,\mathbb{R})/SL(2,\mathbb{Z})$ does not go straight to infinity (i.e., $\textrm{sys}(g_t(X))\to 0$ as $t\to+\infty$) if and only if there is no vertical vector in  $\textrm{Hol}(X)$. In this case, there are (unique) parameters $s, t, \alpha\in\mathbb{R}$ such that 
$$X = h_s g_t u_{-\alpha}(\mathbb{Z}^2)$$ 
\end{proposition}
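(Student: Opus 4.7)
The plan is to produce the decomposition first, under the hypothesis that $\textrm{Hol}(X)$ has no vertical vector, and then to verify the equivalence by direct computation on the three matrix factors $h_s$, $g_t$, $u_{-\alpha}$.

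The easy direction of the equivalence is immediate: if $v = (0,b) \in \textrm{Hol}(X)$ is vertical, then $g_t v = (0, e^{-t}b)$ has Euclidean norm $e^{-t}|b| \to 0$, so $\textrm{sys}(g_t X) \to 0$ and the orbit escapes into the cusp.

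For the decomposition, I first compute directly that $h_s g_t u_{-\alpha}$ has first column $(e^t,\, s e^t)$ and second column $(-\alpha e^t,\, -s\alpha e^t + e^{-t})$. Under the hypothesis, no primitive vector of $\textrm{Hol}(X)$ is vertical, so I may pick a primitive $v_0 \in \textrm{Hol}(X)$ with $\textrm{Re}(v_0) > 0$ (negating if needed). Setting $t := \log \textrm{Re}(v_0)$ and $s := \textrm{Im}(v_0)/\textrm{Re}(v_0)$ makes $v_0 = (e^t, s e^t)$. I then extend $v_0$ to a basis $\{v_0, v_1\}$ of $X$ with $\det(v_0,v_1) = 1$; writing $v_1 = (x,y)$, unimodularity reads $e^t y - s e^t x = 1$, equivalently $y = sx + e^{-t}$. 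Defining $\alpha := -x\, e^{-t}$ then makes $v_1 = (-\alpha e^t,\, -s\alpha e^t + e^{-t})$, which is exactly the second column of $h_s g_t u_{-\alpha}$. Hence $X = h_s g_t u_{-\alpha}(\mathbb{Z}^2)$. Uniqueness is transparent at the group level, since the first column of $h_s g_t u_{-\alpha}$ determines $(s,t)$ and the second column then determines $\alpha$.

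It remains to close the equivalence: no vertical vector should imply $\textrm{sys}(g_\tau X) \not\to 0$. Applying $h_s g_t u_{-\alpha}$ to $(p,q) \in \mathbb{Z}^2$ yields a vertical vector iff $e^t(p - \alpha q) = 0$, i.e., iff $\alpha \in \mathbb{Q}$; so the hypothesis forces $\alpha \notin \mathbb{Q}$. Using the commutation relation $g_\tau h_s = h_{s e^{-2\tau}} g_\tau$ stated in the paper, the orbit rewrites as $g_\tau X = h_{se^{-2\tau}} g_{t+\tau} u_{-\alpha}(\mathbb{Z}^2)$; since $h_{se^{-2\tau}} \to I$ as $\tau \to +\infty$, it suffices to analyze $g_{t+\tau} u_{-\alpha}(\mathbb{Z}^2)$, whose primitive vectors are $(e^{t+\tau}(p - \alpha q),\, e^{-(t+\tau)} q)$ for $\gcd(p,q)=1$. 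The main obstacle I foresee is extracting from the continued fraction expansion of $\alpha \notin \mathbb{Q}$ a sequence of times $\tau_n \to +\infty$ at which $\textrm{sys}(g_{\tau_n} X)$ is bounded below by a uniform positive constant: concretely, one must produce \emph{two} non-parallel short primitive vectors at time $\tau_n$, coming from two consecutive best rational approximations $p_n/q_n$ and $p_{n+1}/q_{n+1}$ with $q_n \le e^{t+\tau_n} < q_{n+1}$, and pair this with Mahler's compactness criterion to preclude divergence. This step is the elementary content of the Dani correspondence in the $SL(2,\mathbb{R})/SL(2,\mathbb{Z})$ setting.
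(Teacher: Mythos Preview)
Your construction of the decomposition is correct but takes a genuinely different route from the paper. You build the factorization $h_s g_t u_{-\alpha}$ by hand: pick a primitive vector with positive real part for the first column, complete to a unimodular basis, and read off $(s,t,\alpha)$. The paper instead invokes the Iwasawa decomposition $X = h_s g_t r_\theta$ and then, observing that $\cos\theta \neq 0$ exactly when $\textrm{Hol}(X)$ has no vertical vector, rewrites the rotation as $r_\theta = h_{\tan\theta}\, g_{\log\cos\theta}\, u_{-\tan\theta}$ and absorbs the extra $h$ and $g$ factors into the left. Your approach is more elementary and self-contained; the paper's buys the matrix-level uniqueness for free from Iwasawa, and makes the link between ``no vertical vector'' and the existence of the decomposition (via $\cos\theta \neq 0$) more transparent.

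For the hard direction of the equivalence, however, you are working much harder than needed, and your outline is not quite the right mechanism. The paper's argument is essentially one line, using only unimodularity: a unimodular lattice has at most one primitive vector of norm $<1$ up to sign (two independent ones would span a parallelogram of area $<1$). So if $\textrm{sys}(g_t X)\to 0$, then for all large $t$ the minimizing vector $v(t)\in\textrm{Hol}(X)$ is unique up to sign; this discrete-valued function of $t$ cannot jump (a jump at $t^*$ would produce two independent vectors of length $\textrm{sys}(g_{t^*}X)<1$), hence $v(t)\equiv v$ is eventually constant, and $\|g_t v\|\to 0$ forces $v$ to be vertical. No continued fractions, no Dani correspondence, no Mahler. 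Your plan of exhibiting \emph{two} short non-parallel primitives at times $\tau_n$ is slightly misdirected: two short independent vectors is precisely what unimodularity forbids. What actually works along your lines is to exhibit, at $e^{t+\tau_n}=q_n$, the single primitive vector $(q_n(p_n-\alpha q_n),\,1)$ of norm in $[1,\sqrt{2}]$, and then use the unimodularity inequality $1\le |\det(w,v)|\le \|w\|\,\|v\|$ to bound the systole below by $1/\sqrt{2}$ --- but this is just a roundabout version of the paper's argument.
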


\begin{proof} By unimodularity, any $X = g(\mathbb{Z}^2)$ has a single \emph{short} holonomy vector. Since $g_t$ contracts vertical vectors and expands horizontal vectors for $t>0$, we have that $\textrm{sys}(g_t(X))\to 0$ as $t\to+\infty$ if and only if $\textrm{Hol}(X)$ contains a vertical vector. 

By Iwasawa decomposition, there are (unique) parameters $s, t, \theta\in\mathbb{R}$ such that $X=h_s g_t r_{\theta}$, where $r_{\theta} = \left( \begin{array}{cc} \cos\theta & -\sin\theta \\ \sin\theta & \cos\theta \end{array} \right)$. Since $\cos\theta\neq 0$ when $\textrm{Hol}(X)$ contains no vertical vector and, in this situation, 
$$r_{\theta} = h_{\tan\theta} g_{\log\cos\theta} u_{-\tan\theta},$$
we see that $X=h_{s+e^{-2t}\tan\theta} \cdot g_{t+\log\cos\theta} \cdot u_{-\tan\theta}(\mathbb{Z}^2)$ 
(because $h_s g_t r_{\theta}= h_s g_t h_{\tan\theta} g_{\log\cos\theta} u_{-\tan\theta}=h_{s+e^{-2t}\tan\theta} \cdot g_{t+\log\cos\theta} \cdot u_{-\tan\theta}$). This ends the proof of the proposition. 
\end{proof}

\begin{proposition} Let $X=h_s g_t u_{-\alpha}(\mathbb{Z}^2)$ be an unimodular lattice without vertical holonomy vectors. Then, 
$$\ell(\alpha) = \limsup\limits_{\substack{|\textrm{Im}(v)|\to\infty \\ v\in\textrm{Hol}(X)}} \frac{1}{\textrm{Area}(v)} = \limsup\limits_{T\to+\infty} \frac{2}{\textrm{sys}(g_T(X))^2}$$
\end{proposition}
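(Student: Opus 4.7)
The plan is to express $\textrm{Hol}(X)$ in coordinates $(p,q)\in\mathbb{Z}^2$, $\gcd(p,q)=1$, and then compare $\textrm{Area}(v)$ with the classical Diophantine quantity $q|q\alpha-p|$. A direct matrix computation gives
$$h_s g_t u_{-\alpha}\begin{pmatrix}p\\q\end{pmatrix}=\begin{pmatrix} e^t(p-\alpha q)\\ se^t(p-\alpha q)+e^{-t}q\end{pmatrix},$$
so that for every $v\in\textrm{Hol}(X)$ we have $\textrm{Re}(v)=e^t(p-\alpha q)$, $\textrm{Im}(v)=se^t(p-\alpha q)+e^{-t}q$, and therefore
$$\textrm{Area}(v)=|p-\alpha q|\cdot|q+se^{2t}(p-\alpha q)|.$$
Observe that the hypothesis ``no vertical holonomy vectors'' is equivalent to $\alpha\notin\mathbb{Q}$, since $\textrm{Re}(v)=0$ with $(p,q)\neq(0,0)$ amounts to $p=\alpha q$ with integer $p,q$.

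For the first equality, I argue that the limsup is effectively attained only on sequences with $p_n/q_n\to\alpha$ and $|q_n|\to\infty$. Indeed, if $|\textrm{Im}(v_n)|\to\infty$ while $1/\textrm{Area}(v_n)$ stays bounded away from $0$, then $|\textrm{Re}(v_n)|\to 0$, i.e.\ $|p_n-\alpha q_n|\to 0$, which by irrationality of $\alpha$ forces $|q_n|\to\infty$. In this regime $|q_n+se^{2t}(p_n-\alpha q_n)|=|q_n|(1+o(1))$, so $\textrm{Area}(v_n)=q_n|q_n\alpha-p_n|(1+o(1))$. Conversely, any sequence of coprime $(p_n,q_n)$ with $p_n/q_n\to\alpha$ and $|q_n|\to\infty$ produces holonomy vectors with $|\textrm{Im}(v_n)|\sim e^{-t}|q_n|\to\infty$ and the same asymptotic Area. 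Combining both directions, the first limsup equals $\limsup 1/(q|q\alpha-p|)=\ell(\alpha)$.

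For the second equality the key tool is the AM--GM bound $\|w\|^2\ge 2|\textrm{Re}(w)\,\textrm{Im}(w)|=2\,\textrm{Area}(w)$, with equality iff $|\textrm{Re}(w)|=|\textrm{Im}(w)|$. Since $g_T$ rescales the two coordinates by the reciprocal factors $e^T$ and $e^{-T}$, it preserves area: $\textrm{Area}(g_Tv)=\textrm{Area}(v)$. Applied to $w=g_T v$ this gives $\|g_T v\|^2\ge 2\,\textrm{Area}(v)$, with equality at $T=T_v:=\tfrac12\log(|\textrm{Im}(v)|/|\textrm{Re}(v)|)$. Hence
$$\frac{2}{\textrm{sys}(g_{T_v}(X))^2}\ge \frac{2}{\|g_{T_v}v\|^2}=\frac{1}{\textrm{Area}(v)},$$
and since $|\textrm{Im}(v)|\to\infty$ forces $|\textrm{Re}(v)|\to 0$ and thus $T_v\to+\infty$, this yields $\limsup_T 2/\textrm{sys}(g_T(X))^2\ge \limsup_{|\textrm{Im}(v)|\to\infty} 1/\textrm{Area}(v)$. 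For the reverse inequality, let $v_T\in\textrm{Hol}(X)$ realize $\|g_T v_T\|=\textrm{sys}(g_T(X))$; then $2/\textrm{sys}(g_T(X))^2\le 1/\textrm{Area}(v_T)$. On any subsequence $T_n\to\infty$ with $\textrm{sys}(g_{T_n}(X))\to 0$, the estimate $|\textrm{Re}(v_{T_n})|\le e^{-T_n}\textrm{sys}(g_{T_n}(X))\to 0$ forces $|q_n|\to\infty$ and hence $|\textrm{Im}(v_{T_n})|\to\infty$, so $v_{T_n}$ lies in the index set of the first limsup.

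The main obstacle is the bookkeeping in the second equality: one must verify that the systole-realizing vectors $v_T$ genuinely lie in the index set $\{v\in\textrm{Hol}(X):|\textrm{Im}(v)|\to\infty\}$ along the subsequence attaining the limsup. This hinges squarely on the irrationality of $\alpha$, through the implication $|p-\alpha q|\to 0\Rightarrow |q|\to\infty$ used twice above.
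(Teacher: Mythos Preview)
Your argument is correct and shares the paper's core ideas: the AM--GM inequality $\|w\|^2\ge 2\,\textrm{Area}(w)$ with equality at the time $T_v=\tfrac12\log(|\textrm{Im}(v)|/|\textrm{Re}(v)|)$ for the systole--area identity, and the reduction of $\textrm{Area}(v)$ to $|q(q\alpha-p)|$ for the link with $\ell(\alpha)$.

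The one organizational difference is how the $(s,t)$-dependence is handled. The paper observes that the commutation relation $g_T h_s = h_{s e^{-2T}} g_T$ (together with $g_T g_t = g_{T+t}$ and continuity of the systole) makes the systole limsup depend only on $\alpha$; since the area limsup equals the systole limsup, it too depends only on $\alpha$, and one may compute it on the stripped-down lattice $u_{-\alpha}(\mathbb{Z}^2)$, where $v=(p-q\alpha,q)$ and $\textrm{Area}(v)=|q(q\alpha-p)|$ exactly. You instead keep the full lattice and show by direct estimate that $|q+se^{2t}(p-\alpha q)|=|q|(1+o(1))$ along the relevant sequences. Both routes are sound; the paper's is a bit more conceptual, yours more hands-on.

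One small imprecision: in the reverse inequality for the second identity you restrict to subsequences with $\textrm{sys}(g_{T_n}(X))\to 0$, but the subsequence realizing the limsup of $2/\textrm{sys}^2$ need only have $\textrm{sys}(g_{T_n}(X))$ \emph{bounded} (automatic if the limsup is finite, and if it is infinite your hypothesis holds anyway). Your key estimate $|\textrm{Re}(v_{T_n})|\le e^{-T_n}\textrm{sys}(g_{T_n}(X))$ still gives $|\textrm{Re}(v_{T_n})|\to 0$ under mere boundedness, so the argument goes through unchanged once you relax the hypothesis.
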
 

\begin{remark} This proposition says that the dynamical quantity $\limsup\limits_{T\to+\infty} \frac{2}{\textrm{sys}(g_T(X))^2}$ does \emph{not} depend on the ``weak-stable part'' $h_s g_t$ (but only on $\alpha$) and it can be computed \emph{without} dynamics by simply studying almost vertical holonomy vectors in $X$. 
\end{remark} 

\begin{proof} Note that $\textrm{Area}(g_t(v)) = \textrm{Area}(v)$ for all $t\in\mathbb{R}$ and $v\in\mathbb{R}^2$. Since $\textrm{Area}(v) = \frac{\|g_{t(v)}(v)\|^2}{2}$ for $t(v):=\frac{1}{2}\log\frac{|\textrm{Im}(v)|}{|\textrm{Re}(v)|}$, the equality $\limsup\limits_{\substack{|\textrm{Im}(v)|\to\infty \\ v\in\textrm{Hol}(X)}} \frac{1}{\textrm{Area}(v)} = \limsup\limits_{T\to+\infty} \frac{2}{\textrm{sys}(g_T(X))^2}$ follows. 

The relation $g_T h_s = h_{s e^{-2T}} g_T$ and the continuity of the systole function imply that $\limsup\limits_{T\to+\infty} \frac{2}{\textrm{sys}(g_T(X))^2}$ depends only on $\alpha$. Because any $v\in\textrm{Hol}(u_{-\alpha}(\mathbb{Z}^2))$ has the form $v=(p-q\alpha, q) = u_{-\alpha}(p,q)$ with $(p,q)\in\textrm{Hol}(\mathbb{Z}^2)$, the equality $\limsup\limits_{\substack{|\textrm{Im}(v)|\to\infty \\ v\in\textrm{Hol}(X)}} \frac{1}{\textrm{Area}(v)} = \ell(\alpha)$. 
\end{proof}

In summary, the previous proposition says that the Lagrange spectrum $L$ coincides with  
$$\{\limsup\limits_{T\to+\infty} H(g_T(x))<\infty: x\in SL(2,\mathbb{R})/SL(2,\mathbb{Z})\}$$ 
where $H(y) = \frac{2}{\textrm{sys}(y)^2}$ is a (proper) height function and $g_t$ is the geodesic flow on $SL(2,\mathbb{R})/SL(2,\mathbb{Z})$. 
\begin{figure}[htb!]
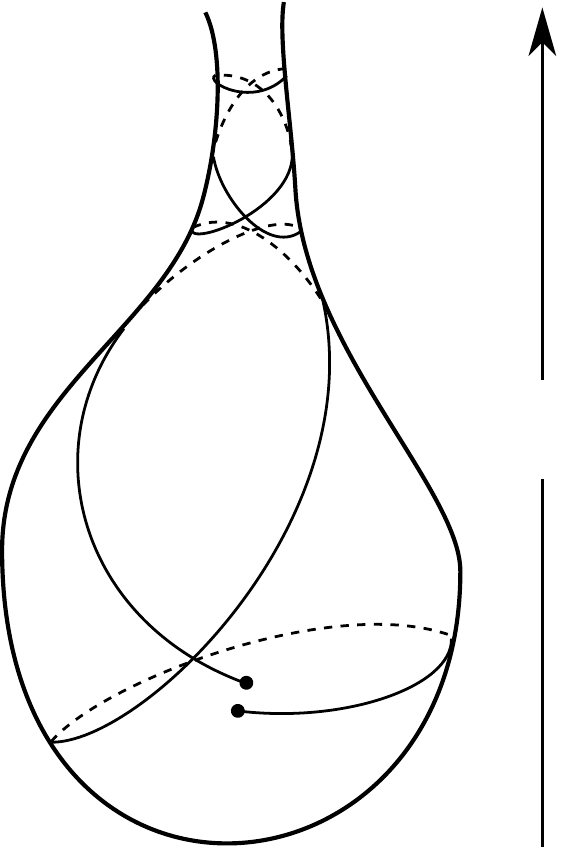
\end{figure}
\begin{remark} Several number-theoretical problems translate into dynamical questions on the modular surface: for example, Zagier \cite{Za-RH} showed that the Riemann hypothesis is equivalent to a certain speed of equidistribution of $u_s$-orbits on $SL(2,\mathbb{R})/SL(2,\mathbb{Z})$. 
\end{remark}

\subsection{Hall's ray and Freiman's constant}

In 1947, M. Hall \cite{Hall} proved that: 

\begin{theorem}[Hall]\label{t.Hall} The half-line $[6,+\infty)$ is contained in $L$. 
\end{theorem}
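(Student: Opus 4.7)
The plan is to reduce to \emph{Hall's sum theorem} on the Cantor set $C(4):=\{[0;a_1,a_2,\dots]:a_i\in\{1,2,3,4\}\text{ for all }i\geq 1\}$, and then realize every $t\geq 6$ as $\ell(\underline{\theta})$ for a carefully constructed $\underline{\theta}\in\Sigma$ via Perron's characterization \eqref{e.Perron-Lagrange}.

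The heart of the argument is Hall's sum theorem:
$$C(4)+C(4)\supseteq\bigl[\sqrt{2}-1,\ 4(\sqrt{2}-1)\bigr].$$
I would prove this first. Solving the fixed-point equations for the extreme values in $C(4)$ (the minimum is $[0;\overline{4,1}]=(\sqrt{2}-1)/2$ and the maximum is $[0;\overline{1,4}]=2(\sqrt{2}-1)$) shows that the right-hand side is exactly the convex hull of $C(4)+C(4)$. Since $C(4)$ is the self-similar limit set of the four inverse branches $x\mapsto 1/(k+x)$, $k\in\{1,2,3,4\}$, of the Gauss map \eqref{e.Gauss-map}, the inclusion can be established by a level-by-level analysis of the Cantor construction: at step $k$ the approximation consists of $4^k$ disjoint bridges, and one checks that at every step the gaps newly created are short enough relative to the neighbouring bridges so that the sumset remains an interval in the limit. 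This is essentially Newhouse's gap lemma specialised to the explicit set $C(4)$, and it is the only arithmetically substantial input in the whole proof: this is the main obstacle.

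Granted Hall's sum theorem, the covering step is immediate. For each integer $n\geq 5$, the translate $n+(C(4)+C(4))$ covers the interval $I_n:=[n+\sqrt{2}-1,\,n+4(\sqrt{2}-1)]$, which has length $3(\sqrt{2}-1)>1$. Hence $I_n$ and $I_{n+1}$ overlap for every $n\geq 5$, and since $I_5=[4+\sqrt{2},\,1+4\sqrt{2}]\approx[5.414,6.657]$ already contains $6$, we obtain $\bigcup_{n\geq 5}I_n\supseteq[6,+\infty)$. Consequently every $t\geq 6$ admits a decomposition
$$t=n+\beta+\gamma,\qquad n\in\N,\ n\geq 5,\ \beta,\gamma\in C(4).$$

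To realize such $t$ as a Lagrange value, fix expansions $\beta=[0;b_1,b_2,\dots]$ and $\gamma=[0;c_1,c_2,\dots]$ with $b_j,c_j\in\{1,2,3,4\}$, pick any $L_k\to+\infty$ and positions $N_1<N_2<\cdots$ with $N_{k+1}-N_k>2L_k$, and define $\underline{\theta}=(a_m)_{m\in\Z}\in\Sigma$ by placing a ``peak block'' around each $N_k$: set $a_{N_k}:=n$, $a_{N_k+j}:=b_j$, $a_{N_k-j}:=c_j$ for $1\leq j\leq L_k$, and fill every remaining position with the digit $1$. By continuity of the continued-fraction map, $f(\sigma^{N_k}\underline{\theta})\to n+\beta+\gamma=t$ as $k\to\infty$. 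On the other hand, for every $m\notin\{N_k\}_{k\geq 1}$ the central digit satisfies $a_m\in\{1,2,3,4\}$, forcing $f(\sigma^m\underline{\theta})<4+1+1=6\leq t$. Therefore $\ell(\underline{\theta})=\limsup_{m\to+\infty}f(\sigma^m\underline{\theta})=t$, and Perron's characterization \eqref{e.Perron-Lagrange} yields $t\in L$. The genuine difficulty of the entire argument is thus concentrated in Hall's sum theorem; the covering and the construction of $\underline{\theta}$ are routine once that is in hand.
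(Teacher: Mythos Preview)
Your proposal is correct and follows essentially the same approach as the paper: both reduce to Hall's lemma $C(4)+C(4)=[\sqrt{2}-1,4(\sqrt{2}-1)]$ (treated as a black box), use that this interval has length $>1$ to decompose any $t\geq 6$ as $n+\beta+\gamma$ with $n\geq 5$ and $\beta,\gamma\in C(4)$, and then construct a sequence in which the large digit $n$ reappears infinitely often surrounded by ever-longer prefixes of the expansions of $\beta$ and $\gamma$, so that Perron's characterization gives $\ell=t$. The only cosmetic difference is the packaging of the sequence: the paper concatenates growing blocks $b_m,\dots,b_1,c_0,a_1,\dots,a_m$ into a single one-sided continued fraction, whereas you place separated peak blocks at positions $N_k$ with $1$'s in between --- both work for the same reason, namely that every non-peak position has central digit $\leq 4$ and hence height $<6$.
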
 

This result motivates the following nomenclature: the biggest half-line $[c_F,+\infty)\subset L (\subset M)$ is called \emph{Hall's ray}. 

In 1975, G. Freiman \cite{Fr75} determined Hall's ray: 

\begin{theorem}[Freiman] $c_F = 4+\frac{253589820+283798\sqrt{462}}{491993569}\simeq 4.527829566...$
\end{theorem}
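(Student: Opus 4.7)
The plan is to exploit Perron's symbolic characterization \eqref{e.Perron-Lagrange}, which identifies $L$ with the set of finite $\limsup$ values of the height function $f(\underline{\theta}) = [a_0;a_1,a_2,\dots] + [0;a_{-1},a_{-2},\dots]$ along $\sigma$-orbits in $\Sigma = (\mathbb{N}^*)^{\mathbb{Z}}$. Since Hall's theorem already gives $[6,\infty) \subset L$, the task is to locate the largest real $c_F \leq 6$ such that $[c_F,\infty)\subset L$ and $c_F$ is an accumulation point of $\mathbb{R}\setminus L$ from below.

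My first step would be to reduce the problem to a Cantor-set sum. Writing $x = [a_0;a_1,\dots]$ and $y = [0;a_{-1},a_{-2},\dots]$, and restricting the partial quotients to a finite alphabet $B \subset \mathbb{N}^*$, the admissible values of $x$ and $y$ lie in dynamically defined Cantor sets $K(B)$ built from the Gauss map \eqref{e.Gauss-map} restricted to sequences with entries in $B$. A standard shadowing/specification argument shows that every value in the closure of $K(B) + K(B)$ is realized as $\ell(\underline{\theta})$ for some bi-infinite $\underline{\theta}$ with entries in $B$. To capture all of $[c_F,\infty)$, I would allow $B$ to be essentially $\{1,2,3,4\}$ augmented by carefully controlled admissible occurrences of digits $\geq 5$, precisely in order to push beyond the crude bound $6$ coming from Hall's argument.

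The main technical step is then to analyze when the sum set $K(B)+K(B)$ fills an interval. Hall's original argument invokes the large thickness of $K(\{1,2,3,4\})$ together with the Newhouse gap lemma, but the bound this yields is not sharp. To reach the explicit value $c_F = 4 + (253589820+283798\sqrt{462})/491993569$, I would look for a precisely tuned family of bi-infinite words whose associated quadratic surds accumulate at $c_F$, and then verify that no gap remains to the right of $c_F$. The presence of $\sqrt{462}$ strongly suggests that the extremal configuration is a purely periodic word $\underline{\theta}^{\ast}$, whose value under $f$ equals $c_F$ exactly; I would construct such a $\underline{\theta}^{\ast}$ explicitly and compute $f(\underline{\theta}^{\ast})=c_F$ using the recursion of Proposition \ref{p.pnqn} and Corollary \ref{c.Mobius-fraction}, which reduces the eigenvalue computation for the associated period-matrix to a quadratic equation with discriminant $462$ (up to a square factor).

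The hardest part, by a wide margin, is the matching lower bound: showing that a genuine gap in $L$ actually abuts $c_F$ from below, so that $[c_F,\infty)$ cannot be extended. This forces one to rule out every admissible two-sided combinatorial configuration whose height function has $\limsup$ in $(c_F-\delta, c_F)$ for some $\delta>0$. Concretely, one must partition the space of admissible bi-infinite blocks according to the patterns appearing near the sites that achieve the $\limsup$, and exploit the order-reversing/order-preserving properties of continued fraction comparisons (depending on parity) to forbid each configuration. This is a finite but enormous case analysis and is precisely the part of Freiman's original work that ran to hundreds of pages; my plan is therefore to perform the upper inclusion $[c_F,\infty)\subset L$ by the Cantor-sum argument sketched above, but to defer the optimality of $c_F$ to Freiman's detailed bookkeeping on admissible blocks rather than attempt a genuinely new route.
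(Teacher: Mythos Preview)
The paper does not prove Freiman's theorem: it is stated as a background result and attributed to Freiman's 1975 monograph \cite{Fr75}, with no argument given beyond the sketch of the much weaker Hall theorem. So there is no ``paper's own proof'' to compare against.

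As for your proposal itself, it is a reasonable outline of the known strategy, and you are candid that it is not a proof. The upper inclusion $[c_F,\infty)\subset L$ via refined Cantor-sum arguments is indeed how Freiman (and later Schecker) proceeded, and your observation that the extremal configuration should be a periodic word whose period matrix has discriminant a square multiple of $462$ is on the right track. But the genuine content of the theorem is precisely the part you defer: the exhaustive combinatorial analysis showing that a gap in $L$ abuts $c_F$ from below. Without that, what you have written is a plausibility argument for the value of $c_F$, not a proof. Note also that your upper-inclusion sketch is itself incomplete: saying you would ``allow $B$ to be essentially $\{1,2,3,4\}$ augmented by carefully controlled admissible occurrences of digits $\geq 5$'' hides exactly the delicate part of Freiman's construction, since the precise admissibility rules are what determine the endpoint $c_F$ rather than some other value.
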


The constant $c_F$ is called \emph{Freiman's constant}. 

Let us sketch the proof of Hall's theorem based on the following lemma:

\begin{lemma}[Hall]\label{l.Hall} Denote by $C(4):=\{[0; a_1, a_2, \dots]\in\mathbb{R}: a_i\in\{1, 2, 3, 4\} \,\,\forall\, i\in\mathbb{N}\}$. Then, 
$$C(4)+C(4):=\{x+y\in\mathbb{R}: x, y\in C(4)\} = [\sqrt{2}-1, 4(\sqrt{2}-1)] = [0.414\dots, 1.656\dots]$$ 
\end{lemma}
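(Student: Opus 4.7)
The first step is to compute the convex hull of $C(4)$. Since $a\mapsto[0;a,\ldots]$ is decreasing in the leading partial quotient, the minimum of $C(4)$ is attained by alternating $4$'s (at odd depths) and $1$'s (at even depths), giving $m:=\min C(4)=[0;\overline{4,1}]$, and symmetrically $M:=\max C(4)=[0;\overline{1,4}]$. The self-similarity $m=1/(4+1/(1+m))$ produces the quadratic $4m^{2}+4m-1=0$, so $m=(\sqrt 2-1)/2$; and then $M=1/(1+m)=2/(1+\sqrt 2)=2(\sqrt 2-1)$. This immediately gives the inclusion $C(4)+C(4)\subseteq[2m,2M]=[\sqrt 2-1,\,4(\sqrt 2-1)]$.

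For the reverse inclusion I would set $I=[m,M]$ (the convex hull of $C(4)$) and, writing $f_i(x)=1/(i+x)$ for $i\in\{1,2,3,4\}$, exploit the IFS identity $C(4)=\bigcup_{i=1}^{4}f_i(C(4))$. The plan is to build, for any target $t\in 2I$, two sequences $(a_n),(b_n)\in\{1,2,3,4\}^{\mathbb{N}}$ such that at every level $n$ one has $t\in (f_{a_1}\circ\cdots\circ f_{a_n})(I)+(f_{b_1}\circ\cdots\circ f_{b_n})(I)$; by compactness this forces $x:=[0;a_1,a_2,\ldots]$ and $y:=[0;b_1,b_2,\ldots]$ to satisfy $x+y=t$ with $x,y\in C(4)$. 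The base case is the assertion that the sixteen intervals $J_{ij}:=f_i(I)+f_j(I)$, with endpoints $1/(i+M)+1/(j+M)$ and $1/(i+m)+1/(j+m)$, already cover $2I$. By direct computation $J_{44}$ begins exactly at $\sqrt 2-1$, $J_{11}$ ends exactly at $4(\sqrt 2-1)$, and listing the remaining fourteen intervals in order of left endpoint, consecutive intervals overlap.

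The inductive step, extending a valid pair of length-$n$ words by one digit each, is where I expect the main obstacle. Because each $f_i$ is M\"obius rather than affine, the ratios of gap-size to bridge-size inside a cylinder $(f_{a_1}\circ\cdots\circ f_{a_n})(I)$ differ from the ones at level $1$, so the tight first-level overlaps between the $J_{ij}$ do not automatically persist after rescaling. The cleanest way to surmount this is to show that the Newhouse thickness $\tau(C(4))$, i.e.\ the infimum over bounded gaps $U$ of $C(4)$ of the ratio between the smaller adjacent bridge length and $|U|$, satisfies $\tau(C(4))^{2}>1$. By the self-similar IFS structure this reduces to a finite computation — the binding constraint being the level-one gap between $f_{1}(I)$ and $f_{2}(I)$, which has the smallest bridge-to-gap ratio — together with a bounded-distortion estimate for the derivatives of the $f_i$ on $I$. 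Once $\tau(C(4))^{2}>1$ is established, Newhouse's gap lemma applied to $C(4)$ and $t-C(4)$, whose convex hulls $I$ and $t-I$ are overlapping and linked for every $t\in 2I$, yields a common point and hence $t\in C(4)+C(4)$. An alternative in the spirit of Hall's original 1947 argument avoids Newhouse by a direct inductive verification that each level-$n$ sum interval is covered by sixteen level-$(n+1)$ sum intervals, at the cost of the same bounded-distortion bookkeeping.
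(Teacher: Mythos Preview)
The paper does not actually prove this lemma; it refers the reader to Cusick--Flahive's book \cite{CF} and remarks that Hall's techniques anticipate Newhouse's gap lemma. So there is no in-paper argument to compare against, only that reference and remark.

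Your endpoint computation is correct and complete: $\min C(4)=[0;\overline{4,1}]=(\sqrt 2-1)/2$ and $\max C(4)=[0;\overline{1,4}]=2(\sqrt 2-1)$, giving the easy inclusion $C(4)+C(4)\subset[\sqrt 2-1,4(\sqrt 2-1)]$. For the reverse inclusion, both strategies you sketch are valid and are precisely the two viewpoints the paper's remark points to: Hall's original inductive covering (this is what Cusick--Flahive present) and the later Newhouse gap-lemma reformulation. Your identification of the binding first-level gap as the one between $f_1(I)$ and $f_2(I)$ is correct --- the bridge-to-gap ratio there is only slightly above $1$ --- so the bounded-distortion step you flag, propagating the thickness bound from level one to all levels, really is where the content lies, and the small margin means it is not entirely for free. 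You correctly isolate this as the remaining work without carrying it out, so the proposal is a sound and well-oriented outline rather than a complete proof, with no wrong turns.
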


\begin{remark} The reader can find a proof of this lemma in Cusick-Flahive's book \cite{CF}. Interestingly enough, some of the techniques in the proof of Hall's lemma were rediscovered much later (in 1979) in the context of Dynamical Systems by Newhouse \cite{Newhouse} (in the proof of his \emph{gap lemma}).  
\end{remark}

\begin{remark} $C(4)$ is a \emph{dynamical Cantor set}\footnote{See Subsections \ref{ss.dynamical-Cantor} and \ref{ss.Gauss-Cantor} below.} whose Hausdorff dimension is $>1/2$ (see Remark \ref{r.JP} below). In particular, $C(4)\times C(4)$ is a planar Cantor set of Hausdorff dimension $>1$ and Hall's lemma says that its image $f(C(4)\times C(4)) = C(4)+C(4)$ under the the projection $f(x,y) = x+y$ contains an interval. Hence, Hall's lemma can be thought as a sort of ``particular case'' of \emph{Marstrand's theorem} \cite{Marstrand} (ensuring that typical projections of planar sets with Hausdorff dimension $>1$ has positive Lebesgue measure).
\end{remark}

For our purposes, the specific form $C(4)+C(4)$ is \emph{not} important: the \emph{key point} is that $C(4)+C(4)$ is an interval of length $>1$. 

Indeed, given $6\leq\ell<\infty$, Hall's lemma guarantees the existence of $c_0\in\mathbb{N}$, $5\leq c_0\leq\ell$ such that $\ell-c_0\in C(4)+C(4)$. Thus, 
$$\ell = c_0 + [0; a_1, a_2,\dots] + [0; b_1, b_2,\dots]$$ 
with $a_i, b_i\in\{1,2,3,4\}$ for all $i\in\mathbb{N}$. 

Define 
$$\alpha:=[0; \underbrace{b_1, c_0, a_1}_{1^{st} \textrm{ block}}, \dots, \underbrace{b_n, \dots, b_1, c_0, a_1, \dots, a_n}_{n^{th} \textrm{ block}}, \dots]$$
Since $c_0\geq 5 > 4\geq a_i, b_i$ for all $i\in\mathbb{N}$, Perron's characterization of $\ell(\alpha)$ implies that 
$$L\ni \ell(\alpha) = \lim\limits_{n\to\infty} (c_0 + [0; a_1, a_2, \dots, a_n] + [0; b_1, b_2, \dots, b_n]) = \ell$$ 
This proves Theorem \ref{t.Hall}. 

\subsection{Statement of Moreira's theorem}

Our discussion so far can be summarized as follows: 
\begin{itemize}
\item $L\cap (-\infty, 3) = M\cap (-\infty, 3) = \{k_1<k_2<\dots<k_n<\dots\}$ is an \emph{explicit} discrete set; 
\item $L\cap [c_F,\infty) = M\cap [c_F,\infty)$ is an \emph{explicit} ray. 
\end{itemize}

Moreira's theorem \cite{Moreira} says that the \emph{intermediate parts} $L\cap [3, c_F]$ and $M\cap [3, c_F]$ of the Lagrange and Markov spectra have an intricate structure: 

\begin{theorem}[Moreira]\label{t.Gugu} For each $t\in\mathbb{R}$, the sets $L\cap (-\infty, t)$ and $M\cap (-\infty, t)$ have the same Hausdorff dimension, say $d(t)\in [0,1]$. 

Moreover, the function $t\mapsto d(t)$ is continuous, $d(3+\varepsilon)>0$ for all $\varepsilon>0$ and $d(\sqrt{12})=1$ (even though  $\sqrt{12}=3.4641... < 4.5278...=c_F$). 
\end{theorem}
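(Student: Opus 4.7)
The natural framework is Perron's dynamical description (\ref{e.Perron-Lagrange}) and (\ref{e.Perron-Markov}): once we identify $\underline{\theta}=(a_n)_{n\in\mathbb{Z}}$ with the pair $(y,x)=([0;a_{-1},a_{-2},\dots],\,[a_0;a_1,\dots])$, the height function becomes $f(\underline\theta)=x+y$, and the natural model for bounded-height subsets of the spectra is the arithmetic sum $C(B)+C(B)$ of \emph{Gauss--Cantor sets} $C(B):=\{[0;a_1,a_2,\dots]:a_i\in B\}$ associated to finite alphabets $B\subset\mathbb{N}^*$. My plan is to reduce all four assertions of the theorem to statements about Hausdorff dimensions of such Gauss--Cantor sets and their arithmetic sums, using the fact that $C(B)$ is a \emph{dynamical} (conformal) Cantor set for the Gauss map \eqref{e.Gauss-map}, so that its dimension is controlled by the pressure/transfer operator of the associated subshift of finite type.

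For the equality $\dim_H(L\cap(-\infty,t))=\dim_H(M\cap(-\infty,t))$, the inclusion $L\subset M$ gives one inequality. The other direction is an \emph{approximation by periodic orbits}: given $m=m(\underline\theta)\in M\cap(-\infty,t)$, choose a large $N$ and let $\theta^{(N)}=(a_{-N},\dots,a_N)$ be the finite block where the supremum is (almost) realised; concatenating shifted copies of $\theta^{(N)}$ produces a new sequence $\underline{\theta}'$ with $\ell(\underline{\theta}')$ arbitrarily close to $m$ from above, and by varying the ``decorations'' between the repeated blocks inside a Gauss--Cantor set $C(B)$ avoiding the heights above $t+\varepsilon$, one obtains a Lipschitz injection of a $(\dim_H(M\cap(-\infty,t))-o(1))$-dimensional subset of $M\cap(-\infty,t)$ into $L\cap(-\infty,t+\varepsilon)$. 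Letting $\varepsilon\to 0^+$ and combining with the continuity of $d$ (established next) gives the equality.

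Continuity of $t\mapsto d(t)$ is genuinely the technical backbone. Right-continuity follows from the identity $L\cap(-\infty,t)=\bigcup_n L\cap(-\infty,t-1/n)$ and the same for $M$, using countable stability of Hausdorff dimension together with $L,M$ closed. For left-continuity one shows that $L\cap(-\infty,t)$ is exhausted from within by sets of the form $(C(B)+C(B))\cap(-\infty,t)$ where $B$ ranges over finite alphabets whose associated subshift never produces a height $\geq t$; the dimension of such a sum can be related (via the Moreira--Yoccoz theory of ``recurrent compact'' configurations and a thermodynamic-formalism bound on $\dim_H C(B)$) to $d(t)-o(1)$, since the alphabets $B$ can be enlarged gradually while keeping the height constraint. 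The estimate $d(3+\varepsilon)>0$ is then obtained by exhibiting, for each $\varepsilon>0$, a concrete finite alphabet $B_\varepsilon\subset\{1,2\}^*$ of allowed finite words (say, those avoiding large blocks of $2$'s) such that all heights computed along $B_\varepsilon^{\mathbb{Z}}$ lie in $(3,3+\varepsilon)$ and the associated Gauss--Cantor set has positive dimension; this uses the classical fact that the Markov spectrum accumulates at $3$ precisely along the $\{1,2\}$-combinatorics (cf.\ Theorem~\ref{t.Markov}).

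The hard part will be $d(\sqrt{12})=1$. My strategy: construct a finite alphabet $B$ (of the form $\{1,2,\dots,N\}$ together with forbidden-word restrictions) satisfying simultaneously (i) every bi-infinite admissible sequence in $B^{\mathbb{Z}}$ has $m(\underline\theta)<\sqrt{12}$, so that $C(B)+C(B)\subset M\cap(-\infty,\sqrt{12})$ up to the additive-integer shift $a_0\in B$, and (ii) $\dim_H C(B)>1/2$. Given (i)--(ii), Moreira's refinement of Marstrand's projection theorem for \emph{regular Cantor sets} shows that the projection $(x,y)\mapsto x+y$ preserves dimension, giving $\dim_H(C(B)+C(B))=2\dim_H C(B)\wedge 1=1$; by the approximation argument in paragraph two this transfers to $L$, yielding $d(\sqrt{12})=1$. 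The main obstacle is (ii): one must produce an explicit $B$ compatible with (i) and then estimate $\dim_H C(B)$ from below through the Bowen formula (the zero of the pressure $P(-s\log|G'|)$ restricted to $B^{\mathbb{N}}$), which requires a careful numerical/combinatorial analysis, exactly as in Hall's original construction of $C(4)$ but tuned so that the height bound $\sqrt{12}$ is never violated. This is the step where Moreira's deep dynamical input (recurrent compact sets and stable intersections) really enters.
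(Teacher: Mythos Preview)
Your proposal has a genuine gap in the continuity argument, and it also misidentifies where the difficulty lies.

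\textbf{The continuity gap.} Your two arguments for continuity both establish the \emph{same} direction. The identity $L\cap(-\infty,t)=\bigcup_n L\cap(-\infty,t-1/n)$ together with countable stability of Hausdorff dimension gives $d(t)=\sup_n d(t-1/n)$, which for a nondecreasing function is \emph{left}-continuity, not right-continuity. Your second argument (exhausting $L\cap(-\infty,t)$ from within by arithmetic sums $C(B)+C(B)$ with height bounded strictly below $t$) is again a lower bound on $d(t)$ in terms of $d(t-\delta)$, hence again left-continuity. You never show that $d(t+\varepsilon)$ cannot jump above $d(t)$. This is a real issue: $L\cap[t,t+\varepsilon)$ decreases to a singleton as $\varepsilon\to 0$, but Hausdorff dimension is not upper semicontinuous under decreasing intersections. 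In the paper, upper semicontinuity is handled by working with the box-counting dimension $\Delta^+(t)$ of the unstable projection $K_t^+=\pi^+(\Sigma_t)$: the key compactness fact is $C^+(t_0,r)=\bigcap_{t>t_0}C^+(t,r)$ for each fixed scale $r$, so $\#C^+(t,r)\to\#C^+(t_0,r)$ as $t\downarrow t_0$, and the infimum formula for box dimension yields $\Delta^+(t)\to\Delta^+(t_0)$. The equality $d(t)=\min\{1,2\Delta^+(t)\}$ then transports continuity to $d$. Your sketch never introduces this projection or the compactness mechanism.

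\textbf{Misplaced difficulty.} You call $d(\sqrt{12})=1$ ``the hard part'' and propose to manufacture an alphabet $B$ with forbidden words so that heights stay below $\sqrt{12}$ and $HD(C(B))>1/2$. In fact this step is essentially free: Perron's classical result says $m(\underline\theta)\leq\sqrt{12}$ \emph{if and only if} $\underline\theta\in\{1,2\}^{\mathbb{Z}}$, so $K_{\sqrt{12}}^+=C(2)$ exactly, and the Jenkinson--Pollicott estimate $HD(C(2))\simeq 0.5313>1/2$ finishes it once the formula $d(t)=\min\{1,2\Delta^+(t)\}$ is in hand. The genuine technical core of the theorem is elsewhere: (i) the combinatorial construction (``good positions'') producing a Gauss--Cantor alphabet $B$ with $\Sigma(B)\subset\Sigma_{t-\delta}$ and $HD(K(B))\geq(1-\eta)\Delta^+(t)$, and (ii) Moreira's dimension formula $HD(K+K')=\min\{1,HD(K)+HD(K')\}$ for non-essentially-affine Cantor sets, which converts this into the lower bound $d(t-\delta)\geq(1-\eta)\min\{1,2\Delta^+(t)\}$. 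This same machinery gives the equality $HD(L\cap(-\infty,t))=HD(M\cap(-\infty,t))$ as a byproduct (both equal $\min\{1,2\Delta^+(t)\}$), so the separate periodic-approximation/Lipschitz-injection argument you propose---which in any case relies circularly on continuity---is unnecessary.
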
 

\begin{remark} Many results about $L$ and $M$ are \emph{dynamical}\footnote{I.e., they involve Perron's characterization of $L$ and $M$, the study of Gauss map and/or the geodesic flow on the modular surface, etc.}. In particular, it is not surprising that many facts about $L$ and $M$ have counterparts for \emph{dynamical Lagrange and Markov spectra}\footnote{I.e., the collections of ``records'' of height functions along orbits of dynamical systems.}: for example, Hall ray or intervals in dynamical Lagrange spectra were found by Parkkonen-Paulin \cite{PP}, Hubert-Marchese-Ulcigrai \cite{HMU} and Moreira-Roma\~na \cite{MR}, and the continuity result in Moreira's theorem \ref{t.Gugu} was recently extended by Cerqueira, Moreira and the author in \cite{CMM}. 
\end{remark} 

Before entering into the proof of Moreira's theorem, let us close this section by briefly recalling the notion of Hausdorff dimension. 

\subsection{Hausdorff dimension} 

The $s$-\emph{Hausdorff measure} $m_s(X)$ of a subset $X\subset\mathbb{R}^n$ is 
$$m_s(X):=\lim\limits_{\delta\to 0}\inf\limits_{\substack{\bigcup\limits_{i\in\mathbb{N}} U_i \supset X, \\ \textrm{diam}(U_i)\leq \delta \,\, \forall\, i\in\mathbb{N}}} \sum\limits_{i\in\mathbb{N}} \textrm{diam}(U_i)^s$$  

The \emph{Hausdorff dimension} of $X$ is 
$$HD(X):=\sup\{s\in\mathbb{R}: m_s(X)=\infty\} = \inf\{s\in\mathbb{R}: m_s(X)=0\}$$

\begin{remark}\label{r.box-counting} There are many notions of dimension in the literature: for example, the \emph{box-counting dimension} of $X$ is $\lim\limits_{\delta\to 0}\frac{\log N_X(\delta)}{\log(1/\delta)}$ where $N_X(\delta)$ is the smallest number of boxes of side lengths $\leq\delta$ needed to cover $X$. As an exercise, the reader is invited to show that the Hausdorff dimension is always smaller than or equal to the box-counting dimension. 
\end{remark} 

The following exercise (whose solution can be found in Falconer's book \cite{Falconer}) describes several elementary properties of the Hausdorff dimension: 

\begin{exercise}\label{exercise.HD} Show that: 
\begin{itemize}
\item[(a)] if $X\subset Y$, then $HD(X)\leq HD(Y)$; 
\item[(b)] $HD(\bigcup\limits_{i\in\mathbb{N}} X_i) = \sup\limits_{i\in\mathbb{N}} HD(X_i)$; in particular, $HD(X)=0$ whenever $X$ is a countable set (such as $X=\{p\}$ or $X=\mathbb{Q}^n$); 
\item[(c)] if $f:X\to Y$ is $\alpha$-H\"older continuous\footnote{I.e., for some constant $C>0$, one has $|f(x)-f(x')|\leq C |x-x'|^{\alpha}$ for all $x, x'\in X$.}, then $\alpha\cdot HD(f(X))\leq HD(X)$; 
\item[(d)] $HD(\mathbb{R}^n) = n$ and, more generally, $HD(X)=m$ when $X\subset\mathbb{R}^n$ is a smooth $m$-dimensional submanifold. 
\end{itemize}
\end{exercise} 

\begin{example} Cantor's middle-third set $C=\{\sum\limits_{i=1}^{\infty}\frac{a_i}{3^i}: a_i\in\{0,2\}\,\, \forall \, i\in\mathbb{N}\}$ has Hausdorff dimension $\frac{\log 2}{\log 3}\in (0,1)$: see Falconer's book \cite{Falconer} for more details. 
\end{example} 

Using item (c) of Exercise \ref{exercise.HD} above, we have the following corollary of Moreira's theorem \ref{t.Gugu}: 

\begin{corollary}[Moreira] The function $t\mapsto HD(L\cap(-\infty,t))$ is not $\alpha$-H\"older continuous for any $\alpha>0$. 
\end{corollary}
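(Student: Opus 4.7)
My plan is to argue by contradiction, applying item (c) of Exercise~\ref{exercise.HD} to the compact set $X_\varepsilon := L \cap [3, 3+\varepsilon]$ for small $\varepsilon > 0$. The key asymmetry to exploit is that $X_\varepsilon$ has small Hausdorff dimension (tending to $0$ as $\varepsilon \to 0^+$) while its image under $d(\cdot) := HD(L \cap (-\infty, \cdot))$ is the full interval $[0, d(3+\varepsilon)]$ of Hausdorff dimension $1$. Plugging this into item (c) will force $\alpha \leq d(3+\varepsilon)$ for every $\varepsilon > 0$, which is incompatible with $d(3+\varepsilon) \to 0$.

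First I would establish that $d(3) = 0$ and consequently $d(3+\varepsilon) \to 0$ as $\varepsilon \to 0^+$. The former follows from Markov's Theorem~\ref{t.Markov}, since $L \cap (-\infty, 3) = \{k_n\}_{n \geq 1}$ is countable and hence has Hausdorff dimension zero by item (b) of Exercise~\ref{exercise.HD}; the limit then follows from the continuity of $d$ asserted in Moreira's Theorem~\ref{t.Gugu}.

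Next I would compute $HD(X_\varepsilon) = d(3+\varepsilon)$ and $d(X_\varepsilon) = [0, d(3+\varepsilon)]$. For the dimension, writing $L \cap (-\infty, 3+\varepsilon] = (L \cap (-\infty, 3)) \cup X_\varepsilon$ and noting that the first piece is countable while adding the single point $3+\varepsilon$ cannot change dimension, item (b) of Exercise~\ref{exercise.HD} yields $HD(X_\varepsilon) = d(3+\varepsilon)$. For the image, the crucial observation is that $d$ is \emph{constant} on every connected component $(a, b)$ of $\mathbb{R} \setminus L$: for $t \in (a, b)$ one has $L \cap (-\infty, t) = L \cap (-\infty, a]$, so $d(t) = d(a)$. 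The intermediate value theorem applied to the continuous monotonic $d$ on $[3, 3+\varepsilon]$ shows every $v \in [0, d(3+\varepsilon)]$ is attained at some $t$; if $t \notin L$, the left endpoint $a$ of its containing gap satisfies $a \in X_\varepsilon$ (using that $3 \in L$, since $L$ is closed and $k_n \to 3$, so no gap of $L$ can cross $3$) and $d(a) = v$. Thus $d(X_\varepsilon) = [0, d(3+\varepsilon)]$, an interval of Hausdorff dimension $1$ whenever $d(3+\varepsilon) > 0$.

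Finally, assuming for contradiction that $d$ is $\alpha$-H\"older continuous on $\mathbb{R}$ for some $\alpha > 0$, the restriction $d|_{X_\varepsilon}$ is also $\alpha$-H\"older (with the same constant), so item (c) of Exercise~\ref{exercise.HD} applied to this restriction gives
\[
\alpha \;=\; \alpha \cdot HD(d(X_\varepsilon)) \;\leq\; HD(X_\varepsilon) \;=\; d(3+\varepsilon)
\]
for every $\varepsilon > 0$. Sending $\varepsilon \to 0^+$ yields $\alpha \leq 0$, contradicting $\alpha > 0$. The main subtlety is the verification $d(X_\varepsilon) = [0, d(3+\varepsilon)]$, which rests on the fact that $d$ only increases on the support $L$ while never skipping any value in the interval from $d(3)$ to $d(3+\varepsilon)$; the rest is bookkeeping with Moreira's theorem and the elementary properties of Hausdorff dimension collected in Exercise~\ref{exercise.HD}.
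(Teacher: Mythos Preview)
Your proof is correct and follows essentially the same approach as the paper: both argue by contradiction, apply item (c) of Exercise~\ref{exercise.HD} to the restriction of $d$ to $L\cap[3,3+\varepsilon]$, and derive $\alpha\leq d(3+\varepsilon)\to 0$. The paper simply asserts that $d$ maps $L\cap[3,3+\varepsilon]$ onto $[0,d(3+\varepsilon)]$ as a consequence of Theorem~\ref{t.Gugu}, whereas you supply the missing justification (constancy of $d$ on gaps of $L$, the fact that $3\in L$, and the intermediate value theorem); this extra care is appropriate and does not change the underlying strategy.
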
 

\begin{proof} By Theorem \ref{t.Gugu}, $d$ maps $L\cap[3,3+\varepsilon]$ to the non-trivial interval $[0, d(3+\varepsilon)]$ for any  $\varepsilon>0$. By item (c) of Exercise \ref{exercise.HD}, if $t\mapsto d(t)=HD(L\cap(-\infty, t))$ were $\alpha$-H\"older continuous for some $\alpha>0$, then it would follow that 
$$0<\alpha=\alpha\cdot HD([0,d(3+\varepsilon)])\leq HD(L\cap [3,3+\varepsilon]) = d(3+\varepsilon)$$
for all $\varepsilon>0$. On the other hand, Theorem \ref{t.Gugu} (and item (b) of Exercise \ref{exercise.HD})  also says that 
$$\lim\limits_{\varepsilon\to 0} d(3+\varepsilon)=d(3)=HD(L\cap(-\infty,3))=0$$
In summary, $0<\alpha\leq\lim\limits_{\varepsilon\to 0}d(3+\varepsilon) = 0$, a contradiction.  
\end{proof}


\section{Proof of Moreira's theorem}

\subsection{Strategy of proof of Moreira's theorem} Roughly speaking, the continuity of $d(t)=HD(L\cap(-\infty, t))$ is proved in four steps: 

\begin{itemize}
\item if $0<d(t)<1$, then for all $\eta>0$ there exists $\delta>0$ such that $L\cap(-\infty, t-\delta)$ can be ``\emph{approximated from inside}'' by $K+K'=f(K\times K')$ where $K$ and $K'$ are \emph{Gauss-Cantor sets} with $HD(K)+HD(K')=HD(K\times K')>(1-\eta)d(t)$ (and $f(x,y)=x+y$);  
\item by \emph{Moreira's dimension formula} (derived from profound works of Moreira and Yoccoz on the geometry of Cantor sets), we have that 
$$HD(f(K\times K')) = HD(K\times K')$$
\item thus, if $0<d(t)<1$, then for all $\eta>0$ there exists $\delta>0$ such that 
$$d(t-\delta)\geq HD(f(K\times K')) = HD(K\times K')\geq (1-\eta)d(t);$$
hence, $d(t)$ is \emph{lower semicontinuous}; 
\item finally, an elementary compactness argument shows the \emph{upper semicontinuity} of $d(t)$.
\end{itemize}

\begin{remark} This strategy is \emph{purely dynamical} because the particular forms of the height function $f$ and the Gauss map $G$ are \emph{not} used. Instead, we just need the \emph{transversality} of the gradient of $f$ to the stable and unstable manifolds (vertical and horizontal axis) and the \emph{non-essential affinity} of Gauss-Cantor sets. (See \cite{CMM} for more explanations.)
\end{remark}

In the remainder of this section, we will implement (a version of) this strategy in order to deduce the continuity result in Theorem \ref{t.Gugu}.  

\subsection{Dynamical Cantor sets}\label{ss.dynamical-Cantor} A \emph{dynamically defined Cantor set} $K\subset \mathbb{R}$ is 
$$K=\bigcap\limits_{n\in\mathbb{N}}\psi^{-n}(I_1\cup\dots\cup I_k)$$
where $I_1,\dots, I_k$ are pairwise disjoint compact intervals, and $\psi:I_1\cup\dots\cup I_k\to I$ is a $C^r$-map from $I_1\cup\dots\cup I_k$ to its convex hull $I$ such that:
\begin{itemize}
\item $\psi$ is \emph{uniformly expanding}: $|\psi'(x)|>1$ for all $x\in I_1\cup\dots\cup I_k$; 
\item $\psi$ is a (full) \emph{Markov map}: $\psi(I_j)=I$ for all $1\leq j\leq k$.
\end{itemize}

\begin{remark} Dynamical Cantor sets are usually defined with a weaker Markov condition, but we stick to this definition for simplicity. 
\end{remark}

\begin{example} Cantor's middle-third set $C=\{\sum\limits_{i=1}^{\infty} \frac{a_i}{3^i}: a_i\in\{0,2\} \,\, \forall\,i\in\mathbb{N}\}$ is 
$$C=\bigcap\limits_{n\in\mathbb{N}} \psi^{-n}([0,1/3]\cup[2/3,1])$$
where $\psi:[0, 1/3]\cup [2/3, 1]\to [0,1]$ is given by
$$\psi(x)=\left\{\begin{array}{cl} 3x, & \textrm{if } 0\leq x\leq 1/3 \\ 3x-2, & \textrm{if } 2/3\leq x\leq 1\end{array}\right.$$
\begin{figure}[htb!]
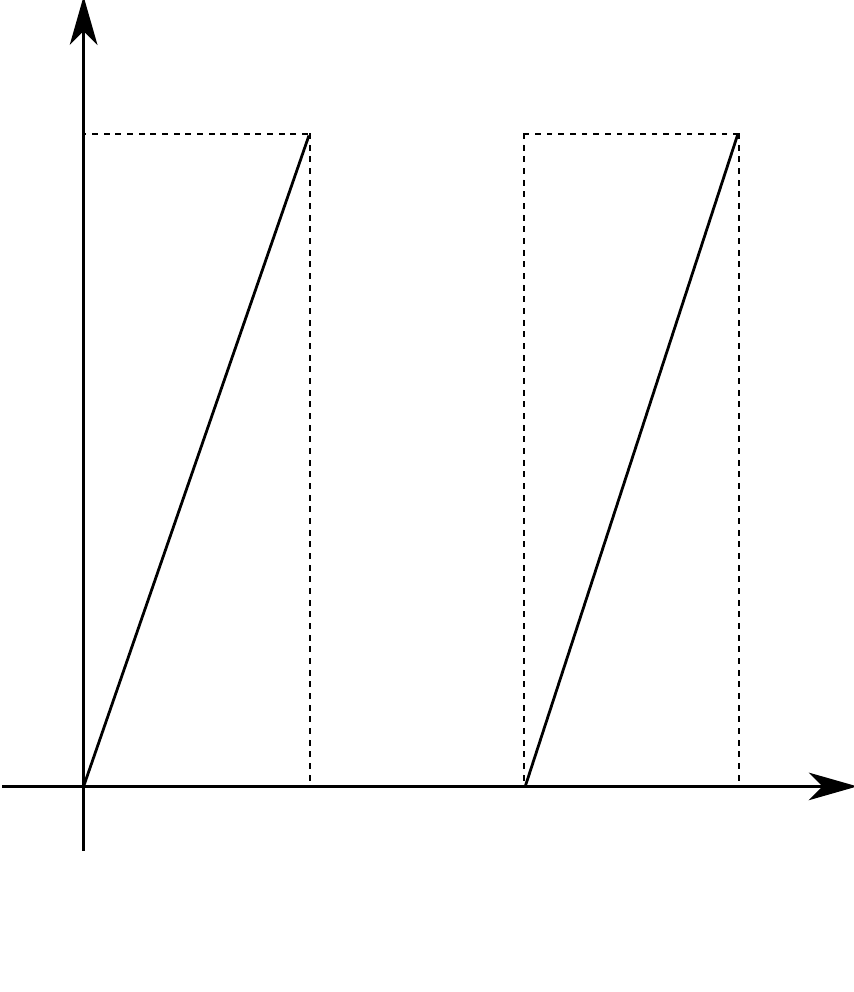
\end{figure}
\end{example}

\begin{remark} A dynamical Cantor set is called \emph{affine} when $\psi|_{I_j}$ is affine for all $j$. In this language, Cantor's middle-third set is an \emph{affine dynamical Cantor set}. 
\end{remark}

\begin{example} Given $A\geq 2$, let $C(A):=\{[0;a_1, a_2,\dots]: 1\leq a_i\leq A\,\,\forall\,i\in\mathbb{N}\}$. This is a dynamical Cantor set associated to Gauss map: for example, 
$$C(2) = \bigcap\limits_{n\in\mathbb{N}} G^{-n}(I_1\cup I_2)$$
where $I_1$ and $I_2$ are the intervals depicted below. 
\begin{figure}[htb!]\label{f.C(2)}
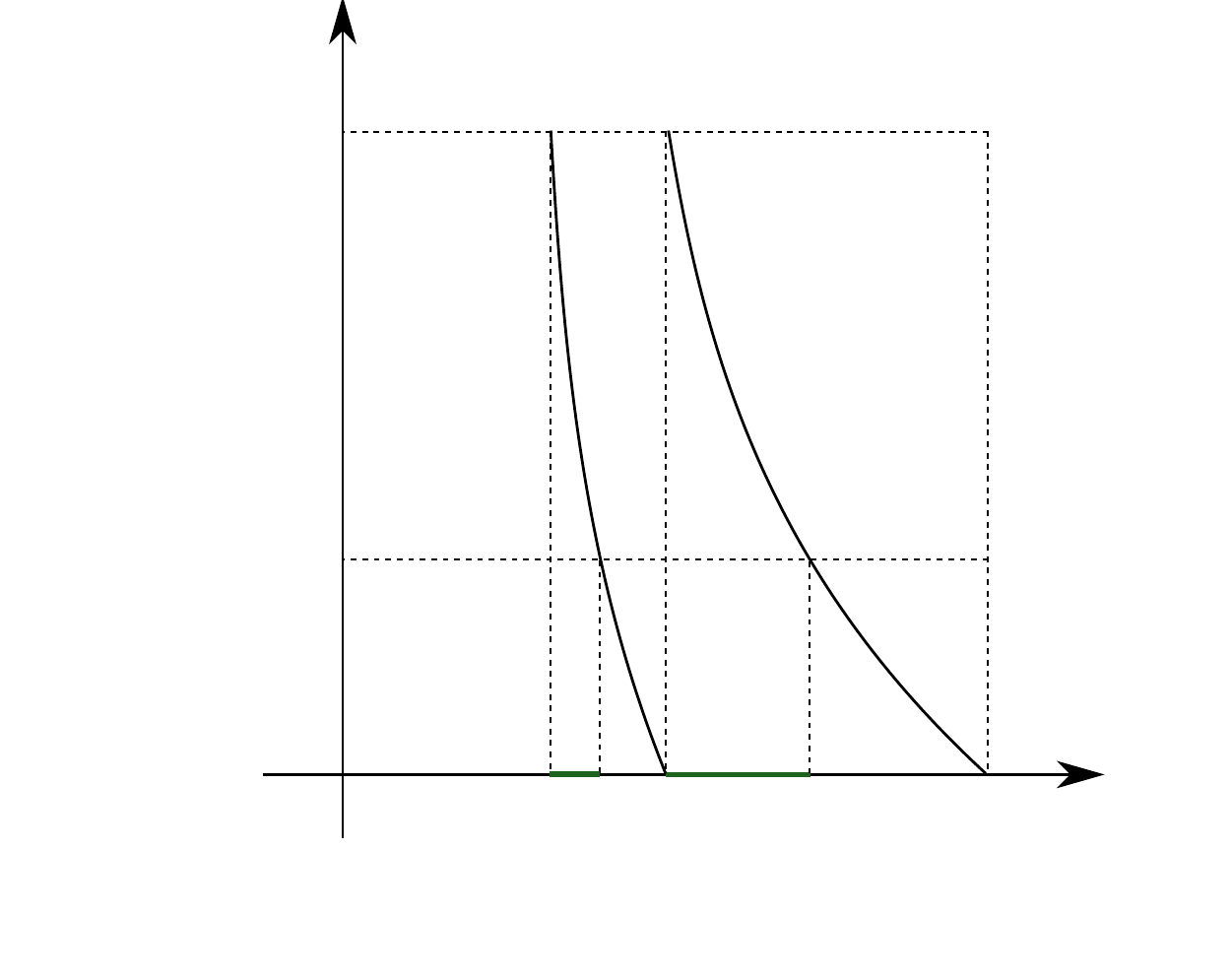
\end{figure}
\end{example}

\begin{remark}\label{r.JP} Hensley \cite{He} showed that 
$$HD(C(A)) = 1-\frac{6}{\pi^2 A} - \frac{72\log A}{\pi^4 A^2} + O(\frac{1}{A^2}) = 1-\frac{1+o(1)}{\zeta(2)A}$$ 
and Jenkinson-Pollicott \cite{JePo1}, \cite{JePo2} used thermodynamical formalism methods to obtain that 
$$HD(C(2)) = 0.53128050627720514162446864736847178549305910901839\dots,$$
$$HD(C(3)) \simeq 0.705\dots, \quad HD(C(4)) \simeq 0.788\dots$$
\end{remark}

\subsection{Gauss-Cantor sets}\label{ss.Gauss-Cantor} 

The set $C(A)$ above is a particular case of \emph{Gauss-Cantor set}:

\begin{definition} Given $B=\{\beta_1,\dots,\beta_l\}$, $l\geq 2$, a finite, primitive\footnote{I.e., $\beta_i$ doesn't begin by $\beta_j$ for all $i\neq j$.} alphabet of finite words $\beta_j\in(\mathbb{N}^*)^{r_j}$, the Gauss-Cantor set $K(B)\subset [0,1]$ associated to $B$ is 
$$K(B):=\{[0;\gamma_1, \gamma_2, \dots]: \gamma_i\in B\,\,\forall\, i\}$$ 
\end{definition}

\begin{example} $C(A) = K(\{1,\dots, A\})$. 
\end{example}

\begin{exercise}\label{ex.Gauss-Cantor} Show that any Gauss-Cantor set $K(B)$ is dynamically defined.\footnote{Hint: For each word $\beta_j\in(\mathbb{N}^*)^{r_j}$, let $I(\beta_j)=\{[0;\beta_j, a_1,\dots]:a_i\in\mathbb{N}\,\,\forall\,i\}=I_j$ and $\psi|_{I_j}:=G^{r_j}$ where $G(x)=\{1/x\}$ is the Gauss map.}
\end{exercise}

From the \emph{symbolic} point of view, $B=\{\beta_1,\dots,\beta_l\}$ as above induces a subshift $$\Sigma(B)=\{(\gamma_i)_{i\in\mathbb{Z}}:\gamma_i\in B \,\,\forall\,i\}\subset \Sigma=(\mathbb{N}^*)^{\mathbb{Z}} = \Sigma^-\times\Sigma^+:=(\mathbb{N}^*)^{\mathbb{Z}^-}\times (\mathbb{N}^*)^{\mathbb{N}}$$
Also, the corresponding Gauss-Cantor is $K(B)=\{[0;\gamma]:\gamma\in\Sigma^+(B)\}$ where $\Sigma^+(B) = \pi^+(\Sigma(B))$ and $\pi^+:\Sigma\to\Sigma^+$ is the natural projection (related to local unstable manifolds of the left shift map on $\Sigma$). 

For later use, denote by $B^T=\{\beta^T:\beta\in B\}$ the \emph{transpose} of $B$, where $\beta^T:=(a_n,\dots, a_1)$ for $\beta=(a_1,\dots,a_n)$. 

The following proposition (due to Euler) is proved in Appendix \ref{a.Euler}: 
\begin{proposition}[Euler]\label{p.Euler} If $[0;\beta]=\frac{p_n}{q_n}$, then $[0;\beta^T]=\frac{r_n}{q_n}$.
\end{proposition}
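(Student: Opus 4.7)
The plan is to exploit the matrix representation \eqref{e.SL2Z-fraction}: iterating that identity (starting from $p_{-1}=1$, $q_{-1}=0$, $p_0=0$, $q_0=1$, which correspond to $a_0=0$ since we are dealing with $[0;a_1,\dots,a_n]$) yields
$$\begin{pmatrix} p_n & p_{n-1} \\ q_n & q_{n-1} \end{pmatrix} = \begin{pmatrix} 0 & 1 \\ 1 & 0 \end{pmatrix} \begin{pmatrix} a_1 & 1 \\ 1 & 0 \end{pmatrix} \cdots \begin{pmatrix} a_n & 1 \\ 1 & 0 \end{pmatrix}.$$
Denoting $N_n := \prod_{i=1}^{n}\begin{pmatrix} a_i & 1 \\ 1 & 0 \end{pmatrix}$, multiplying through by $\left(\begin{smallmatrix} 0 & 1 \\ 1 & 0 \end{smallmatrix}\right)$ (which just swaps rows) gives $N_n = \left(\begin{smallmatrix} q_n & q_{n-1} \\ p_n & p_{n-1} \end{smallmatrix}\right)$.

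Next, I would apply the same identity to the transposed word $\beta^T = (a_n,\dots,a_1)$: writing $\tilde{p}_k/\tilde{q}_k$ for its convergents, the analogue of $N_n$ is $\tilde{N}_n = \prod_{i=1}^{n}\begin{pmatrix} a_{n+1-i} & 1 \\ 1 & 0 \end{pmatrix}$. The crucial observation is that each factor $\left(\begin{smallmatrix} a_i & 1 \\ 1 & 0 \end{smallmatrix}\right)$ is a symmetric matrix, so transposing a product of them simply reverses the order of the factors. Therefore $\tilde{N}_n = N_n^T$, and combining this with the displayed formula applied to $\beta^T$ produces
$$\begin{pmatrix} \tilde{p}_n & \tilde{p}_{n-1} \\ \tilde{q}_n & \tilde{q}_{n-1} \end{pmatrix} = \begin{pmatrix} 0 & 1 \\ 1 & 0 \end{pmatrix} N_n^T = \begin{pmatrix} 0 & 1 \\ 1 & 0 \end{pmatrix}\begin{pmatrix} q_n & p_n \\ q_{n-1} & p_{n-1} \end{pmatrix} = \begin{pmatrix} q_{n-1} & p_{n-1} \\ q_n & p_n \end{pmatrix}.$$

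Reading off the first column yields $\tilde{q}_n = q_n$ and $\tilde{p}_n = q_{n-1}$, so $[0;\beta^T] = q_{n-1}/q_n$ and the proposition holds with $r_n := q_{n-1}$. Note that this tells us a bit more than the bare statement: the numerator $r_n$ for $\beta^T$ is identified with the previous denominator $q_{n-1}$ for $\beta$.

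There is no serious obstacle here; the only care needed is bookkeeping with the indexing convention of Proposition~\ref{p.pnqn} (in particular tracking the initial ``$a_0=0$'' factor $\left(\begin{smallmatrix} 0 & 1 \\ 1 & 0 \end{smallmatrix}\right)$), and the conceptual observation that symmetry of the factor matrices converts transpose into order-reversal.
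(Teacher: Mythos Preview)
Your proof is correct and complete; the bookkeeping with the initial factor $\left(\begin{smallmatrix}0&1\\1&0\end{smallmatrix}\right)$ is handled cleanly, and the key identity $\tilde N_n=N_n^T$ follows exactly as you say from the symmetry of each factor.

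Your route differs from the paper's. The paper (Appendix~\ref{a.Euler}) argues combinatorially via \emph{Euler's rule}: one expands the recursion for $q(a_1,\dots,a_n)$ as a sum of products of the $a_i$ obtained by deleting disjoint adjacent pairs, observes that this description is manifestly invariant under reversal of the word, and concludes $q(a_1,\dots,a_n)=q(a_n,\dots,a_1)$. Your argument instead packages the same recursion as the matrix product \eqref{e.SL2Z-fraction} and uses that transposition of a product of symmetric matrices reverses the order. The matrix approach is shorter and yields the extra information $r_n=q_{n-1}$ (and even $\tilde q_{n-1}=p_n$) for free, which the combinatorial argument does not immediately give; on the other hand, Euler's rule makes the symmetry visible at the level of the continuant polynomial itself, without any linear-algebraic machinery. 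Both are standard and either would serve the application to Corollary~\ref{c.Euler}.
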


A striking consequence of this proposition is: 
\begin{corollary}\label{c.Euler} $HD(K(B)) = HD(K(B^T))$.
\end{corollary}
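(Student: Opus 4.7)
The plan is to use Proposition~\ref{p.Euler} to show that the Markov intervals at each level of the dynamical construction of $K(B)$ (from Exercise~\ref{ex.Gauss-Cantor}) and of $K(B^T)$ have matching lengths up to a uniform bounded distortion, and then to invoke the fact that the Hausdorff dimension of a conformal dynamical Cantor set is determined by the asymptotics of its Markov-interval lengths.

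First, I would establish a length formula. For any finite word $\gamma=(a_1,\dots,a_n)$ with $[0;\gamma]=p_n/q_n$, the cylinder $I(\gamma):=\{[0;a_1,\dots,a_n,t_1,t_2,\dots]:t_i\in\mathbb{N}^*\}$ is an interval whose extremities are $[0;a_1,\dots,a_n]=p_n/q_n$ and $[0;a_1,\dots,a_n,1]=(p_n+p_{n-1})/(q_n+q_{n-1})$. Proposition~\ref{p.pnqn} and Corollary~\ref{c.determinant-fraction} together give
\[
|I(\gamma)| \;=\; \frac{1}{q_n(q_n+q_{n-1})}, \qquad \text{hence} \qquad \tfrac{1}{2q_n^2}\;\leq\;|I(\gamma)|\;\leq\;\tfrac{1}{q_n^2}.
\]
In the construction of $K(B)$ from Exercise~\ref{ex.Gauss-Cantor}, the Markov intervals at level $k$ are exactly the cylinders $I(\gamma_{i_1}\cdots\gamma_{i_k})$ for $(\gamma_{i_1},\dots,\gamma_{i_k})\in B^k$, where $\gamma_{i_1}\cdots\gamma_{i_k}$ denotes concatenation viewed as a single word of length $r_{i_1}+\cdots+r_{i_k}$; their lengths are therefore comparable, up to a uniform multiplicative constant, to $1/q(\gamma_{i_1}\cdots\gamma_{i_k})^2$.

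The key combinatorial point is that the assignment $(\gamma_{i_1},\dots,\gamma_{i_k})\mapsto(\gamma_{i_k}^T,\dots,\gamma_{i_1}^T)$ is a bijection $B^k\to(B^T)^k$, and since $(\gamma_{i_1}\cdots\gamma_{i_k})^T=\gamma_{i_k}^T\cdots\gamma_{i_1}^T$, Proposition~\ref{p.Euler} yields
\[
q(\gamma_{i_1}\cdots\gamma_{i_k}) \;=\; q\bigl((\gamma_{i_1}\cdots\gamma_{i_k})^T\bigr) \;=\; q(\gamma_{i_k}^T\cdots\gamma_{i_1}^T).
\]
Consequently, the multisets of lengths of Markov intervals at level $k$ for the constructions of $K(B)$ and of $K(B^T)$ coincide up to a uniform multiplicative constant independent of $k$.

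To conclude, I would invoke the standard fact --- valid for conformal $C^{1+\varepsilon}$ dynamical Cantor sets, see Falconer~\cite{Falconer} --- that $HD$ is the unique exponent $s_0$ at which the sum $\sum_{(\gamma_{i_1},\dots,\gamma_{i_k})\in B^k}|I(\gamma_{i_1}\cdots\gamma_{i_k})|^s$ transitions between diverging and vanishing as $k\to\infty$ (equivalently, Bowen's pressure equation, using that $|(\psi^k)'|$ is comparable to $1/|I|$ on each level-$k$ Markov interval by the bounded distortion of iterates of the Gauss map). Since the relevant sums for $K(B)$ and for $K(B^T)$ agree up to uniform constants at every level, their critical exponents coincide, yielding $HD(K(B))=HD(K(B^T))$. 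The only (minor) obstacle is checking that Proposition~\ref{p.Euler} applies to arbitrary composite words of length $r_{i_1}+\cdots+r_{i_k}$, but this is immediate since that proposition is stated for arbitrary finite words.
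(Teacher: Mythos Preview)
Your proof is correct and follows essentially the same approach as the paper's sketch: both use Euler's identity $q(\gamma)=q(\gamma^T)$ to match the Markov-interval lengths at every level of the constructions of $K(B)$ and $K(B^T)$, and conclude equality of Hausdorff dimensions from this. You have simply made explicit the bounded-distortion estimate $|I(\gamma)|\asymp q(\gamma)^{-2}$ and the pressure/Bowen-formula argument that the paper's one-line sketch leaves implicit.
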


\begin{proof}[Sketch of proof] The lengths of the intervals $I(\beta) = \{[0;\beta, a_1,\dots]: a_i\in\mathbb{N} \,\,\forall\,i \}$ in the construction of $K(B)$ depend only on the denominators of the partial quotients of $[0;\beta]$. Therefore, we have from Proposition \ref{p.Euler} that $K(B)$ and $K(B^T)$ are Cantor sets constructed from intervals with same lengths, and, \emph{a fortiori}, they have the Hausdorff dimension. 
\end{proof}

\begin{remark} This corollary is closely related to the existence of \emph{area-preserving} natural extensions of Gauss map (see \cite{Ar}) and the coincidence of stable and unstable dimensions of a horseshoe of an area-preserving surface diffeomorphism (see \cite{McCMa}).  
\end{remark}

\subsection{Non-essentially affine Cantor sets} We say that 
$$K=\bigcap\limits_{n\in\mathbb{N}} \psi^{-n}(I_1\cup\dots\cup I_r)$$ 
is \emph{non-essentially affine} if there is \emph{no} global conjugation $h\circ\psi\circ h^{-1}$ such that \emph{all} branches 
$$(h\circ \psi\circ h^{-1})|_{h(I_j)}, \,\,\, j=1, \dots, r$$ 
are affine maps of the real line. 

Equivalently, if $p\in K$ is a periodic point of $\psi$ of period $k$ and $h:I\to I$ is a diffeomorphism of the convex hull $I$ of $I_1\cup\dots \cup I_r$ such that $h\circ\psi^k\circ h^{-1}$ is affine\footnote{Such a diffeomorphism $h$ linearizing \emph{one} branch of $\psi$ always exists by Poincar\'e's linearization theorem.} on $h(J)$ where $J$ is the connected component of the domain of $\psi^k$ containing $p$, then $K$ is non-essentially affine if and only if $(h\circ\psi\circ h^{-1})''(x)\neq 0$ for some $x\in h(K)$. 

\begin{proposition}\label{p.non-ess-aff-Gauss} Gauss-Cantor sets are non-essentially affine.
\end{proposition}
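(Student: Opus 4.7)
My plan is to argue by contradiction via the Schwarzian derivative: a hypothetical diffeomorphism $h$ that makes every branch affine must itself be a Möbius transformation, and that in turn collides with the primitivity of~$B$ via an algebraic rigidity argument in $PSL(2,\mathbb{Z})$. First I would note that each $\phi_j:=\psi|_{I(\beta_j)}=G^{r_j}$ is a Möbius transformation whose matrix $M_{\beta_j}\in PSL(2,\mathbb{Z})$ has nonzero lower-left entry (equal to the denominator $q_{r_j}$ of $[0;\beta_j]$), so $\phi_j$ is \emph{not} affine. Assume for contradiction that some $C^{3}$ diffeomorphism $h:I\to h(I)$ makes every $g_j:=h\circ\phi_j\circ h^{-1}$ affine. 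Since Möbius and affine maps both have vanishing Schwarzian, the chain rule for $S$ applied to $g_j$ reduces the hypothesis to the cohomological equation
$$
Sh(\psi(x))\,\psi'(x)^{2}\;=\;Sh(x),\qquad x\in I_{1}\cup\cdots\cup I_{l}, \qquad(\star)
$$
which will be the engine of the whole argument.

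The second step is to deduce from $(\star)$ that $Sh\equiv 0$ on $I$. At any periodic point $p$ of $\psi$ of period $n$, iterating $(\star)$ gives $Sh(p)\bigl((\psi^{n})'(p)^{2}-1\bigr)=0$, and uniform expansion of $\psi$ forces $Sh(p)=0$. Periodic points being dense in $K(B)$ and $Sh$ being continuous, we get $Sh\equiv 0$ on $K(B)$. To upgrade to the whole interval $I$, I fix $y\in I$, choose any code $(j_k)_{k\geq 1}\in B^{\mathbb{N}}$, and consider the iterated inverse branches $z_k:=\phi_{j_k}^{-1}\circ\cdots\circ\phi_{j_1}^{-1}(y)$, which converge in $K(B)$. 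Inverting $(\star)$ at each stage yields $Sh(y)=Sh(z_k)\cdot(z_k'(y))^{2}$ for every $k$; since $\psi$ is uniformly expanding, $|z_k'(y)|\to 0$ exponentially, while $Sh$ is bounded on the compact interval $I$. Hence $Sh(y)=0$, so $Sh\equiv 0$ on $I$, which forces $h$ to be the restriction of a Möbius transformation.

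The third step produces the contradiction from algebraic rigidity. Since each $g_j$ is affine (hence fixes $\infty$) and $h$ is Möbius, every $\phi_j$ fixes the single point $x_{0}:=h^{-1}(\infty)$. If $x_{0}=\infty$ then $h$ is affine, and the conjugation $h\phi_j h^{-1}$ being affine would force $\phi_j$ itself to be affine---contradicting its nonzero lower-left entry. Otherwise $x_{0}\in\mathbb{R}$ is a common real fixed point of all the hyperbolic elements $\phi_1,\ldots,\phi_l\in PSL(2,\mathbb{Z})$. The two fixed points of each $\phi_j$ are Galois-conjugate quadratic irrationals, roots of an irreducible integer quadratic (the fixed-point polynomial of $M_{\beta_j}$); a common root $x_0$ forces these minimal polynomials to be pairwise proportional, so all $\phi_j$ share \emph{both} fixed points. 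Sharing both fixed points means they commute and together generate a cyclic subgroup of $PSL(2,\mathbb{Z})$, so each $M_{\beta_j}$ is a power of one and the same primitive matrix. Since concatenation of words corresponds to multiplication of the continued-fraction matrices (and continued-fraction expansions are unique), this forces every $\beta_j$ to be a power of a single common primitive word---directly contradicting the primitivity of the alphabet $B$.

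The main obstacle I expect is the propagation step in Paragraph~2: a priori $Sh$ vanishes only on the totally disconnected set $K(B)$, and one must combine the cohomological equation $(\star)$ with the contraction of inverse branches to eliminate any nonzero contribution of $Sh$ on the ``gaps'' of $I\setminus K(B)$. Once $Sh\equiv 0$ on all of $I$ is established, the closing algebraic argument about hyperbolic elements of $PSL(2,\mathbb{Z})$ is classical number-theoretic rigidity.
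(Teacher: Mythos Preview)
Your proof is correct, but it takes a substantially longer route than the paper's. The paper exploits the \emph{equivalent} characterization of ``non-essentially affine'' stated just before the proposition: it suffices to linearize \emph{one} branch $\psi|_{I_1}$ by a diffeomorphism $h$ and then check that $(h\circ\psi|_{I_2}\circ h^{-1})''$ does not vanish identically. Since $\psi|_{I_1}$ is M\"obius, its Poincar\'e linearization $\alpha_1$ is itself M\"obius, so the paper gets ``$h$ is M\"obius'' for free --- completely bypassing your Schwarzian cocycle equation~$(\star)$ and the propagation argument from $K(B)$ to all of $I$. From there the paper's endgame is also leaner than yours: rather than invoking cyclic subgroups of $PSL(2,\mathbb{Z})$ and primitivity of $B$, it simply observes that if $\alpha_1\circ\psi|_{I_2}\circ\alpha_1^{-1}$ were affine then $\alpha_1^{-1}(\infty)$ would be a common root of the two irreducible integer fixed-point quadratics, forcing them to coincide and hence $x_1=x_2$ --- impossible since $x_j\in I_j$ and $I_1\cap I_2=\emptyset$.

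What your approach buys is that it works directly from the primary definition (no global affine model) without appealing to the equivalence, and your Schwarzian argument would apply verbatim to any Cantor set whose branches all have vanishing Schwarzian, not just M\"obius ones. One small point to tighten in your final paragraph: the primitive generator $M_0$ of the cyclic group need not itself be a continued-fraction matrix, so ``each $\beta_j$ is a power of a common word'' is not immediate. You can repair this by noting that $M_{\beta_1}^{k_2}=M_{\beta_2}^{k_1}$ (same signs since both $\phi_j$ are expanding), hence $\beta_1^{k_2}=\beta_2^{k_1}$ by uniqueness of continued-fraction expansions, and then invoke the Fine--Wilf type fact for free monoids. Alternatively, you can shortcut exactly as the paper does: sharing both fixed points forces the positive fixed points $x_1\in I_1$ and $x_2\in I_2$ to coincide, which is already a contradiction.
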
 

\begin{proof} The basic idea is to explore the fact that the second derivative of a non-affine M\"obius transformation never vanishes. 

More concretely, let $B=\{\beta_1,\dots,\beta_m\}$, $\beta_j\in(\mathbb{N}^*)^{r_j}$, $1\leq j\leq m$. For each $\beta_j$, let 
$$x_j:=[0;\beta_j,\beta_j,\dots]\in I_j=I(\beta_j)\subset\{[0;\beta_j,\alpha]:\alpha\geq 1\}$$ 
be the fixed point of the branch $\psi|_{I_j}=G^{r_j}$ of the expanding map $\psi$ naturally\footnote{Cf. Exercise \ref{ex.Gauss-Cantor}.} defining the Gauss-Cantor set $K(B)$.

By Corollary \ref{c.Mobius-fraction}, $\psi|_{I_j}(x)=\frac{q^{(j)}_{r_{j}-1}x - p^{(j)}_{r_{j}-1}}{p^{(j)}_{r_j} - q^{(j)}_{r_j}x}$ where $\frac{p^{(j)}_k}{q^{(j)}_k} = [0; b^{(j)}_1, \dots, b^{(j)}_k]$ and $\beta_j = (b^{(j)}_1, \dots, b^{(j)}_{r_j})$. 

Note that the fixed point $x_j$ of $\psi|_{I_j}$ is the positive solution of the second degree equation 
$$q^{(j)}_{r_j} x^2 + (q^{(j)}_{r_{j}-1} - p^{(j)}_{r_j}) x - p^{(j)}_{r_{j}-1} = 0$$
In particular, $x_j$ is a \emph{quadratic surd}. 

For each $1\leq j\leq k$, the M\"obius transformation $\psi|_{I_j}$ has a hyperbolic fixed point $x_j$. It follows (from Poincar\'e linearization theorem) that there exists a M\"obius transformation 
$$\alpha_j(x) = \frac{a_j x + b_j}{c_j x + d_j}$$ 
linearizing $\psi|_{I_j}$, i.e., $\alpha_j(x_j)=x_j$, $\alpha'(x_j)=1$ and $\alpha_j\circ (\psi|_{I_j})\circ \alpha_j^{-1}$ is an affine map. 

Since non-affine M\"obius transformations have non-vanishing second derivative, the proof of the proposition will be complete once we show that $\alpha_1\circ(\psi|_{I_2})\circ\alpha_1^{-1}$ is not affine. So, let us suppose by contradiction that $\alpha_1\circ(\psi|_{I_2})\circ\alpha_1^{-1}$ is affine. In this case, $\infty$ is a common fixed point of the (affine) maps $\alpha_1\circ(\psi|_{I_2})\circ\alpha_1^{-1}$ and $\alpha_1\circ(\psi|_{I_1})\circ\alpha_1^{-1}$, and, \emph{a fortiori}, $\alpha_1^{-1}(\infty) = -d_1/c_1$ is a common fixed point of $\psi|_{I_1}$ and $\psi|_{I_2}$. Thus, the second degree equations 
$$q^{(1)}_{r_1} x^2 + (q^{(1)}_{r_1-1} - p^{(1)}_{r_1}) x - p^{(j)}_{r_1-1} = 0 \quad \textrm{and} \quad q^{(2)}_{r_2} x^2 + (q^{(2)}_{r_2-1} - p^{(2)}_{r_2}) x - p^{(2)}_{r_2-1} = 0$$ 
would have a common root. This implies that these polynomials coincide (because they are polynomials in $\mathbb{Z}[x]$ which are irreducible\footnote{Thanks to the fact that their roots $x_1, x_2\notin\mathbb{Q}$.}) and, hence, their other roots $x_1$, $x_2$ must coincide, a contradiction. 
\end{proof}

\subsection{Moreira's dimension formula} The Hausdorff dimension of projections of products of non-essentially affine Cantor sets is given by the following formula:

\begin{theorem}[Moreira]\label{t.Moreira-dim} Let $K$ and $K'$ be two $C^2$ dynamical Cantor sets. If $K$ is non-essentially affine, then the projection $f(K\times K')=K+K'$ of $K\times K'$ under $f(x,y)=x+y$ has Hausdorff dimension 
$$HD(f(K+K')) = \min\{1, HD(K)+HD(K')\}$$
\end{theorem}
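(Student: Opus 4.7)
The plan is to establish $HD(K+K') = \min\{1, HD(K)+HD(K')\}$ in two parts. The upper bound is soft: since $f(x,y)=x+y$ is Lipschitz, $HD(K+K') \leq HD(K\times K')$; for $C^2$ dynamically defined Cantor sets Hausdorff and upper box-counting dimensions coincide, so $HD(K\times K') = HD(K)+HD(K')$, and together with the trivial bound $HD(K+K')\leq 1$ this yields the upper estimate.

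For the lower bound the real difficulty is that $f$ corresponds to a single projection direction, whereas Marstrand's projection theorem only guarantees the desired dimension for almost every direction. The whole point of the non-essential affinity hypothesis is to replace averaging over directions by averaging over \emph{scales}. Let $\psi$ and $\tilde{\psi}$ be the expanding maps defining $K$ and $K'$. Given admissible finite words $\underline a$ for $\psi$ and $\underline b$ for $\tilde{\psi}$, the inverse branches $\psi_{\underline a}^{-1}$ and $\tilde{\psi}_{\underline b}^{-1}$ send $K, K'$ onto small pieces $K_{\underline a} \subset K$ and $K'_{\underline b} \subset K'$; after rescaling, $f$ restricted to $K_{\underline a}\times K'_{\underline b}$ becomes a new nonlinear projection of $K\times K'$ whose derivative at any chosen base point makes an angle determined by the ratio of $|D\psi_{\underline a}^{-1}|$ and $|D\tilde{\psi}_{\underline b}^{-1}|$. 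If $K$ were essentially affine, all these rescaled projections would be globally conjugate to a single one and no gain would be possible; non-essential affinity is exactly the condition guaranteeing a non-degenerate distribution of these effective angles as $\underline a$ varies.

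To exploit this I would run an energy/Frostman argument. Take the Gibbs measure $\mu$ on $K$ for the potential $-s\log|D\psi|$ with $s=HD(K)$, the analogous $\mu'$ on $K'$, and estimate the $t$-energy of $f_*(\mu\otimes\mu')$ for every $t<\min\{1, HD(K)+HD(K')\}$. Decomposing scale by scale, the contribution from scale $e^{-n}$ is controlled by the number of pairs of cylinders at that scale whose $f$-images nearly collide. The Moreira--Yoccoz scale recurrence lemma asserts that, under non-essential affinity of $K$, a definite proportion of such cylinder pairs are \emph{transverse} at each scale; this bounds the offending collisions by the Marstrand-style expected count, finite $t$-energy follows, and Frostman's lemma then gives $HD(K+K')\geq t$.

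The hard part of the plan will be the scale recurrence lemma itself. Concretely, one considers a cocycle over the renormalization dynamics that records the affine conjugacy class of the pair $(K_{\underline a}, K'_{\underline b})$. Non-essential affinity of $K$ translates, via a Liv\v{s}ic-type rigidity argument, into the statement that this cocycle is not a coboundary, and hence its distribution under the natural Gibbs state is genuinely spread out. That qualitative non-degeneracy must then be upgraded to the quantitative statement that a positive fraction of the renormalization orbit visits a prescribed set of transverse configurations at every scale. This quantitative ergodic step is the substantive core of Moreira's dimension formula, and is the only step above that is not a routine adaptation of standard techniques.
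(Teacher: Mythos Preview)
The paper does not actually prove this theorem. It explicitly declares the proof ``out of the scope of these notes'' and only lists the ingredients on which it depends: the Moreira--Yoccoz analysis of \emph{limit geometries} and \emph{renormalization operators}, ``recurrence on scales'', ``compact recurrent sets of relative configurations'', and Marstrand's theorem, with a pointer to \cite{Moreira-dim} for details. So there is no in-paper proof to compare your proposal against.

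That said, your sketch is in line with what the paper alludes to. Your upper bound is the standard one. For the lower bound you correctly identify that the role of non-essential affinity is to produce genuine variation in the renormalized relative configurations (what the paper calls limit geometries), and that the substantive input is a quantitative scale-recurrence statement \`a la Moreira--Yoccoz replacing the average over directions in Marstrand by an average over scales. You are also honest that the scale-recurrence step is where the real work lies and is not something you have carried out. As a proof \emph{plan} this is accurate and matches the list of ingredients the paper cites; as a \emph{proof} it is, of course, incomplete in exactly the place you flag, and the actual argument in \cite{Moreira-dim} does require the full machinery of compact recurrent sets of relative configurations rather than a direct energy computation.
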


\begin{remark} This statement is a \emph{particular} case of Moreira's dimension formula (which is sufficient for our current purposes because Gauss-Cantor sets are non-essentially affine). 
\end{remark}

The proof of this result is out of the scope of these notes: indeed, it depends on the techniques introduced in two works (from 2001 and 2010) by Moreira and Yoccoz \cite{MY01}, \cite{MY10} such as fine analysis of \emph{limit geometries} and \emph{renormalization operators}, ``recurrence on scales'', ``compact recurrent sets of relative configurations'', and \emph{Marstrand's theorem}. We refer the reader to \cite{Moreira-dim} for more details. 

\begin{remark} Moreira's dimension formula is coherent with Hall's Lemma \ref{l.Hall}: in fact, since $HD(C(4))>1/2$, it is natural that $HD(C(4)+C(4))=1$. 
\end{remark}

\subsection{First step towards Moreira's theorem \ref{t.Gugu}: projections of Gauss-Cantor sets}

Let $\Sigma(B)\subset(\mathbb{N}^*)^{\mathbb{Z}}$ be a complete shift of finite type. Denote by $\ell(\Sigma(B))$, resp. $m(\Sigma(B))$, the pieces of the Lagrange, resp. Markov, spectrum generated by $\Sigma(B)$, i.e., 
$$\ell(\Sigma(B)) = \{\ell(\underline{\theta}): \underline{\theta}\in\Sigma(B)\}, \,\, \textrm{resp.} \quad 
m(\Sigma(B)) = \{m(\underline{\theta}): \underline{\theta}\in\Sigma(B)\}$$ 
where $\ell(\underline{\theta})=\limsup\limits_{n\to\infty}f(\sigma^n(\underline{\theta}))$, $m(\underline{\theta})=\sup\limits_{n\in\mathbb{Z}}f(\sigma^n(\underline{\theta}))$, $f((\theta_i)_{i\in\mathbb{Z}}) = [\theta_0;\theta_1,\dots]+[0;\theta_{-1},\dots]$ and $\sigma((\theta_i)_{i\in\mathbb{Z}}) = (\theta_{i+1})_{i\in\mathbb{Z}}$ is the shift map. 

The following proposition relates the Hausdorff dimensions of the pieces of the Langrange and Markov spectra associated to $\Sigma(B)$ and the projection $f(K(B)\times K(B^T))$:

\begin{proposition}\label{p.Lagrange-Cantor} One has $HD(\ell(\Sigma(B))) = HD(m(\Sigma(B))) = \min\{1, 2\cdot HD(K(B))\}$.
\end{proposition}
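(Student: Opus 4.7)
The plan is to sandwich both pieces $\ell(\Sigma(B))$ and $m(\Sigma(B))$ between images of products of Gauss--Cantor sets under explicit M\"obius transformations, and then invoke Moreira's dimension formula. First, since $B$ is primitive, every $\underline{\theta}\in\Sigma(B)$ admits a unique parsing as a bi-infinite concatenation of blocks from $B$, so each position $n\in\mathbb{Z}$ falls within some block $\beta_j$ at some offset $k\in\{1,\dots,r_j\}$. Using Corollary~\ref{c.Mobius-fraction} and the fact that the forward tail $(\theta_n,\theta_{n+1},\dots)$ (resp.\ the backward tail $(\theta_{n-1},\theta_{n-2},\dots)$) naturally encodes an element of $K(B)$ (resp.\ $K(B^T)$), the height function decomposes as $f(\sigma^n(\underline{\theta}))=T_{j,k}(\widehat{\xi})+S_{j,k}(\widehat{\eta})$ for some $\widehat{\xi}\in K(B)$ and $\widehat{\eta}\in K(B^T)$, where $T_{j,k}$ and $S_{j,k}$ are M\"obius transformations depending only on $(j,k)$. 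Consequently $f(\Sigma(B))$ is the finite union, over the possible alignments $(j,k)$, of the sumsets $E_{j,k}:=T_{j,k}(K(B))+S_{j,k}(K(B^T))$.

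For the upper bound, I will use a standard subsequence extraction, relying on continuity of $f$ and compactness of $\Sigma(B)$ (the alphabet $B$ being finite): for every $\underline{\theta}\in\Sigma(B)$ with $\ell(\underline{\theta})=\alpha$, one can extract $\sigma^{n_k}(\underline{\theta})\to\underline{\theta}^{*}\in\Sigma(B)$ such that $f(\underline{\theta}^{*})=m(\underline{\theta}^{*})=\alpha$, giving $\ell(\Sigma(B))\subset m(\Sigma(B))\subset f(\Sigma(B))$. Because M\"obius transformations are $C^{\infty}$ diffeomorphisms that preserve the non-essential affinity of Proposition~\ref{p.non-ess-aff-Gauss} (a conjugation linearizing all branches of $T\circ\psi\circ T^{-1}$ would yield one for $\psi$), each $T_{j,k}(K(B))$ is a non-essentially affine $C^{2}$ dynamical Cantor set with $HD(T_{j,k}(K(B)))=HD(K(B))$, and likewise $HD(S_{j,k}(K(B^T)))=HD(K(B^T))=HD(K(B))$ by Corollary~\ref{c.Euler}. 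Moreira's dimension formula (Theorem~\ref{t.Moreira-dim}) then gives $HD(E_{j,k})=\min\{1,\,2HD(K(B))\}$, whence $HD(m(\Sigma(B)))\leq\min\{1,2HD(K(B))\}$ and the same upper bound for $HD(\ell(\Sigma(B)))$.

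For the matching lower bound $HD(\ell(\Sigma(B)))\geq\min\{1,2HD(K(B))\}$, I will construct a bilipschitz parametrization of a subset of $\ell(\Sigma(B))$ by a product of Gauss--Cantor sets whose Hausdorff dimensions sum to $2HD(K(B))$. The natural candidate sends $(\xi,\eta)\in K(B)\times K(B^T)$ --- encoded by sequences $\beta_{j_1}\beta_{j_2}\dots$ and $\beta_{i_1}\beta_{i_2}\dots$ of blocks in $B$ --- to the block concatenation $\underline{\theta}(\xi,\eta)=\dots\beta_{i_2}\beta_{i_1}\beta_{j_1}\beta_{j_2}\dots$ with position $0$ placed at the central boundary; a direct computation gives $f(\underline{\theta}(\xi,\eta))=1/\xi+\eta$. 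The crux is to promote this single value to the $\limsup$ defining $\ell(\underline{\theta})$, i.e.\ to force the supremum over all shifts to be attained (in the limsup sense) precisely at the designed position. To achieve this I will periodize longer and longer central slabs of $\underline{\theta}(\xi,\eta)$ and separate them by copies of a fixed marker block $\beta^{*}\in B$ chosen so that every position inside $\beta^{*}$ has strictly smaller $f$-value than the central one; restricting $(\xi,\eta)$ to the sub-Gauss--Cantor set on which $1/\xi+\eta$ dominates this threshold makes the composite $(\xi,\eta)\mapsto\ell(\underline{\theta})=1/\xi+\eta$ bilipschitz onto its image, and a final application of Theorem~\ref{t.Moreira-dim} to the non-essentially affine Cantor sets $1/K(B)$ and $K(B^T)$ yields the desired bound.

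The hardest step will be this lower bound: arranging the marker construction so that the $\limsup$ is controlled at \emph{every} shift position, not merely the designed ones, while simultaneously verifying that the restriction to the sub-Gauss--Cantor set on which the central value dominates does not cost Hausdorff dimension. If the natural restriction turns out to be dimension-reducing, the fix will be to pass to a sub-alphabet $B'\subset B$ with $HD(K(B'))$ arbitrarily close to $HD(K(B))$ and re-run the argument with $B'$ in place of $B$, then let $B'$ exhaust $B$.
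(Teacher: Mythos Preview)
Your upper bound is fine, though more elaborate than necessary: once you have $m(\Sigma(B))\subset f(\Sigma(B))\subset\bigcup_{j,k}E_{j,k}$, the trivial Lipschitz bound $HD(K+K')\leq HD(K\times K')\leq HD(K)+HD(K')$ already gives $HD(m(\Sigma(B)))\leq 2\,HD(K(B))$ via Corollary~\ref{c.Euler}, with no need for Theorem~\ref{t.Moreira-dim} at this stage. The paper writes the cruder inclusion $m(\Sigma(B))\subset\bigcup_{a=1}^{R}(a+K(B)+K(B^T))$; your M\"obius decomposition over the offsets $(j,k)$ is the honest version of that sentence.

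The lower bound has a genuine gap. Your marker $\beta^{*}$ and the restriction ``$1/\xi+\eta$ exceeds the threshold'' control the $f$-values at positions \emph{inside the marker}, but say nothing about positions inside the slab away from its center. At a position $p$ letters from the center of a long slab, $f$ approximates $f(\sigma^{p}(\underline{\theta}(\xi,\eta)))$, and as the slabs exhaust $\underline{\theta}(\xi,\eta)$ you recover all of these values. Hence the $\limsup$ you actually produce is $m(\underline{\theta}(\xi,\eta))=\sup_{p}f(\sigma^{p}(\underline{\theta}(\xi,\eta)))$, not $1/\xi+\eta$. The map $(\xi,\eta)\mapsto m(\underline{\theta}(\xi,\eta))$ has no reason to be bilipschitz or even locally injective, so you cannot read off the dimension of its image from Theorem~\ref{t.Moreira-dim}.

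The paper's mechanism for forcing the supremum to sit at the designed position is different from (and not recoverable from) your marker idea: one first picks $\widetilde{\theta}\in\Sigma(B)$ realizing $\max m(\Sigma(B))$, freezes a long central block $(\widetilde{\gamma}_{-m},\dots,\widetilde{\gamma}_{m})$ of $\widetilde{\theta}$, and then fills the two wings with words from the \emph{trimmed} alphabet $B^{*}=B\setminus\{\min B,\max B\}$ (after replacing $B$ by $B^{n}$ so that deleting two words costs at most $\varepsilon$ in dimension). Because the wings avoid the extremal words, a compactness argument gives a uniform $\eta>0$ with $f(\sigma^{n}(\theta))<m(\theta)-\eta$ at every non-central position, while the central value varies bilipschitzly with the wings; Theorem~\ref{t.Moreira-dim} then applies to the replica Cantor sets. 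Your fallback ``pass to a sub-alphabet $B'\subset B$'' is in the right direction, but you invoke it for the wrong reason (potential dimension loss of a restriction) rather than the real one: trimming the alphabet on the wings, combined with anchoring at the maximizer, is precisely what pushes $f$ strictly below the central value at \emph{every} non-central slab position.
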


\begin{proof}[Sketch of proof] By definition, 
$$\ell(\Sigma(B))\subset m(\Sigma(B))\subset \bigcup\limits_{a=1}^R(a+K(B)+K(B^T))$$
where $R\in\mathbb{N}$ is the largest entry among all words of $B$. 

Thus, $HD(\ell(\Sigma(B))) \leq HD(m(\Sigma(B))) \leq HD(K(B)) + HD(K(B^T))$. By Corollary \ref{c.Euler}, it follows that 
$$HD(\ell(\Sigma(B))) \leq HD(m(\Sigma(B))) \leq \min\{1, 2\cdot HD(K(B))\}$$ 

By Moreira's dimension formula (cf. Theorem \ref{t.Moreira-dim}), our task is now reduced to show that for all $\varepsilon>0$, there are ``replicas'' $K$ and $K'$ of Gauss-Cantor sets such that 
$$HD(K), HD(K') > HD(K(B))-\varepsilon \quad \textrm{and} \quad f(K\times K')=K+K'\subset \ell(\Sigma(B))$$ 

In this direction, let us order $B$ and $B^T$ by declaring that $\gamma<\gamma'$ if and only if $[0;\gamma] < [0;\gamma']$. 

Given $\varepsilon>0$, we can replace if necessary $B$ and/or $B^T$ by $B^n=\{\gamma_1\dots\gamma_n: \gamma_i\in B\,\,\forall\,i\}$ and/or $(B^T)^n$ for some large $n=n(\varepsilon)\in\mathbb{N}$ in such a way that 
$$HD(K(B^*)), HD(K((B^T)^*)) > HD(K(B))-\varepsilon$$
where $A^*:=\{\min A, \max A\}$. Indeed, this holds because the Hausdorff dimension of a Gauss-Cantor set $K(A)$ associated to an alphabet $A$ with a large number of words does not decrease too much after removing only two words from $A$.  

We \emph{expect} the values of $\ell$ on $((B^T)^*)^{\mathbb{Z}^-}\times (B^*)^{\mathbb{N}}$ to \emph{decrease} because we removed the minimal and maximal elements of $B$ and $B^T$ (and, in general, $[a_0; a_1, a_2, \dots]<[b_0; b_1, b_2, \dots]$ if and only if $(-1)^k(a_k-b_k)<0$ where $k$ is the smallest integer with $a_k\neq b_k$). 

In particular, this gives \emph{some} control on the values of $\ell$ on $((B^T)^*)^{\mathbb{Z}^-}\times (B^*)^{\mathbb{N}}$, but this does \emph{not} mean that $K(B^*)+K((B^T)^*)\subset\ell(\Sigma(B))$. 

We overcome this problem by studying \emph{replicas} of $K(B^*)$ and $K((B^T)^*)$. More precisely, let  $\widetilde{\theta} = (\dots,\widetilde{\gamma}_0,\widetilde{\gamma}_1,\dots)\in\Sigma(B)$, $\widetilde{\gamma}_i\in B$ for all $i\in\mathbb{Z}$, such that 
$$m(\widetilde{\theta}) = \max m(\Sigma(B))$$ 
is attained at a position in the block $\widetilde{\gamma}_0$. 

By compactness, there exists $\eta>0$ and $m\in\mathbb{N}$ such that any 
$$\theta=(\dots,\gamma_{-m-2},\gamma_{-m-1},\widetilde{\gamma}_{-m},\dots,\widetilde{\gamma}_0,\dots,\widetilde{\gamma}_m,\gamma_{m+1},\gamma_{m+2},\dots)$$
with $\gamma_i\in B^*$ for all $i>m$ and $\gamma_i\in (B^T)^*$ for all $i<-m$ satisfies: 
\begin{itemize}
\item $m(\theta)$ is attained in a position in the \emph{central block} $(\widetilde{\gamma}_{-m}, \dots, \widetilde{\gamma}_0, \dots, \widetilde{\gamma}_m)$; 
\item $f(\sigma^n(\theta)) < m(\theta)-\eta$ for any \emph{non-central position} $n$. 
\end{itemize}

By exploring these properties, it is possible to enlarge the central block to get a word called $\tau^{\#}=(a_{-N_1},\dots, a_0,\dots, a_{N_2})$ in Moreira's paper \cite{Moreira} such that the replicas 
$$K=\{[a_0; a_1,\dots, a_{N_2}, \gamma_1, \gamma_2,\dots]: \gamma_i\in B^*\,\,\forall\,i>0\}$$ 
and 
$$K'=\{[0; a_{-1},\dots, a_{-N_1}, \gamma_{-1}, \gamma_{-2},\dots]: \gamma_i\in (B^T)^*\,\,\forall\,i<0\}$$ 
of $K(B^*)$ and $K((B^T)^*)$ have the desired properties that 
$$K+K' = f(K\times K')\subset \ell(\Sigma(B))$$ 
and 
$$HD(K)=HD(K(B^*))>HD(K)-\varepsilon, \quad HD(K')=HD(K((B^T)^*))>HD(K(B^T))-\varepsilon$$ 
This completes our sketch of proof of the proposition. 
\end{proof}

\subsection{Second step towards Moreira's theorem \ref{t.Gugu}: upper semi-continuity} 

Let $\Sigma_t:=\{\theta\in (\mathbb{N}^*)^{\mathbb{Z}}: m(\theta)\leq t\}$ for $3\leq t < 5$. 

Our long term goal is to compare $\Sigma_t$ with its projection $K_t^+:=\{[0;\gamma]:\gamma\in\pi^+(\Sigma_t)\}$ on the unstable part (where $\pi^+:(\mathbb{N}^*)^{\mathbb{Z}}\to (\mathbb{N}^*)^{\mathbb{N}}$ is the natural projection). 

Given $\alpha=(a_1,\dots,a_n)$, its \emph{unstable scale} $r^+(\alpha)$ is 
$$r^+(\alpha) = \lfloor\log 1/(\textrm{length of }I^+(\alpha))\rfloor$$
where $I^+(\alpha)$ is the interval with extremities $[0;a_1,\dots,a_n]$ and $[0; a_1,\dots,a_n+1]$. 

Denote by 
$$P_r^+:=\{\alpha=(a_1,\dots, a_n): r^+(\alpha)\geq r, r^+(a_1,\dots,a_{n-1})<r\}$$ 
and 
$$C^+(t,r):=\{\alpha\in P_r^+: I^+(\alpha)\cap K_t^+\neq\emptyset\}.$$

\begin{remark} By symmetry (i.e., replacing $\gamma$'s by $\gamma^T$'s), we can define $K^-_t$, $r^-(\alpha)$, etc. 
\end{remark}

For later use, we observe that the unstable scales have the following behaviour under concatenations of words: 

\begin{exercise}\label{ex.subadditive} Show that $r^+(\alpha\beta k)\geq r^+(\alpha)+r^+(\beta)$ for all $\alpha$, $\beta$ finite words and for all $k\in\{1,2,3,4\}$.  
\end{exercise}

In particular, since the family of intervals 
$$\{I^+(\alpha\beta k): \alpha\in C^+(t,r), \beta\in C^+(t,s), 1\leq k\leq 4\}$$ 
covers $K_t^+$, it follows from Exercise \ref{ex.subadditive} that 
$$\# C^+(t,r+s)\leq 4\#C^+(t,r)\#C^+(t,s)$$ 
for all $r, s\in\mathbb{N}$ and, hence, the sequence $(4\#C^+(t,r))_{r\in\mathbb{N}}$ is \emph{submultiplicative}.

So, the \emph{box-counting dimension} (cf. Remark \ref{r.box-counting}) $\Delta^+(t)$ of $K_t^+$ is 
$$\Delta^+(t) = \inf\limits_{m\in\mathbb{N}}\frac{1}{m}\log(4\#C^+(t,m)) = 
\lim\limits_{m\to\infty} \frac{1}{m}\log\#C^+(t,m)$$

An elementary compactness argument shows that the upper-semicontinuity of $\Delta^+(t)$:

\begin{proposition}\label{p.upper-sc} The function $t\mapsto \Delta^+(t)$ is upper-semicontinuous. 
\end{proposition}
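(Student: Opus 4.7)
The plan is to prove that for each $r\in\mathbb{N}$ the function $g_r(t):=\frac{1}{r}\log(4\cdot\#C^+(t,r))$ is upper-semicontinuous on $[3,5)$, and then to appeal to the general fact that a pointwise infimum of upper-semicontinuous functions is upper-semicontinuous. Since $\Delta^+(t)=\inf_r g_r(t)$, this will give the desired conclusion. The problem thus reduces to showing that the integer-valued function $t\mapsto \#C^+(t,r)$ is upper-semicontinuous for every fixed $r$.

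First I would check that $\#C^+(t,r)$ is \emph{finite} when $t<5$. If $\theta\in\Sigma_t$ then for every $n\in\mathbb{Z}$ one has $\theta_n\leq f(\sigma^n\theta)\leq m(\theta)\leq t<5$, so $\theta_n\in\{1,2,3,4\}$; hence $K_t^+\subseteq C(4)$ and every $\alpha\in C^+(t,r)$ is a word over the alphabet $\{1,\dots,4\}$ whose unstable scale is at least $r$ while its parent has scale $<r$. There are only finitely many such words. Moreover $C^+(\cdot,r)$ is monotone non-decreasing in $t$: if $s\leq t$ then $\Sigma_s\subseteq\Sigma_t$, so $K_s^+\subseteq K_t^+$ and $C^+(s,r)\subseteq C^+(t,r)$.

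Since $\#C^+(\cdot,r)$ is integer-valued and non-decreasing, upper-semicontinuity at $t$ is equivalent to right-continuity: it suffices to show that whenever $s_k\searrow t$ one has $C^+(s_k,r)=C^+(t,r)$ for all $k$ large enough. Suppose, for contradiction, that there is a word $\alpha$ lying in $C^+(s_{k_j},r)\setminus C^+(t,r)$ along some subsequence. Choose $\theta^{(j)}\in\Sigma_{s_{k_j}}$ with $x_j:=[0;\pi^+(\theta^{(j)})]\in I^+(\alpha)$. By the previous paragraph, $\theta^{(j)}\in\{1,\dots,4\}^{\mathbb{Z}}$ for $j$ large; this space is compact in the product topology, so pass to a convergent subsequence $\theta^{(j)}\to\theta$. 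The height function $f$ is continuous on $\{1,\dots,4\}^{\mathbb{Z}}$, so the Markov value $m=\sup_{n\in\mathbb{Z}}f\circ\sigma^n$ is lower-semicontinuous there, giving $m(\theta)\leq \liminf_j m(\theta^{(j)})\leq \liminf_j s_{k_j}=t$, i.e.\ $\theta\in\Sigma_t$. Continuity of the continued-fraction map on bounded-entry sequences yields $x_j\to x:=[0;\pi^+(\theta)]\in\overline{I^+(\alpha)}$. The two endpoints of $I^+(\alpha)$ are rational while $x$ is irrational (its continued fraction expansion is infinite with entries in $\{1,\dots,4\}$), so $x$ lies in the interior of $I^+(\alpha)$, whence $x\in I^+(\alpha)\cap K_t^+$ and $\alpha\in C^+(t,r)$, a contradiction.

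This establishes the upper-semicontinuity of each $g_r$ and therefore of $\Delta^+$. The main delicate point is the compactness/limit step: it depends both on the a priori bound $\theta_n\leq 4$, which furnishes the needed compactness, and on the irrationality of elements of $K_t^+$, without which the accumulation point $x$ could land on a rational endpoint of $I^+(\alpha)$ and escape the set.
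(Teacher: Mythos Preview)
Your proof is correct and rests on the same key fact as the paper's argument, namely that $\bigcap_{t>t_0}C^+(t,r)=C^+(t_0,r)$; the paper asserts this ``by compactness'' in a single phrase, whereas you carry out the compactness argument in full (sequences in $\{1,\dots,4\}^{\mathbb{Z}}$, lower-semicontinuity of $m=\sup_n f\circ\sigma^n$, irrationality to avoid the rational endpoints of $I^+(\alpha)$). Your packaging is slightly different and arguably cleaner: you use the identity $\Delta^+(t)=\inf_r \tfrac{1}{r}\log(4\,\#C^+(t,r))$ to reduce to upper-semicontinuity of each fixed-$r$ count, while the paper argues by contradiction directly on $\Delta^+$ and only then invokes the same stabilization of $C^+(t,r)$ as $t\searrow t_0$. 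Both routes are short; yours has the advantage of making the finiteness of $\#C^+(t,r)$ for $t<5$ and the role of the entry bound $\theta_n\le 4$ explicit.
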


\begin{proof} For the sake of contradiction, assume that there exist $\eta>0$ and $t_0$ such that $\Delta^+(t)>\Delta^+(t_0)+\eta$ for all $t>t_0$. 

By definition, this means that there exists $r_0\in\mathbb{N}$ such that 
$$\frac{1}{r}\log\#C^+(t,r) > \Delta^+(t_0)+\eta$$ 
for all $r\geq r_0$ and $t>t_0$. 

On the other hand, $C^+(t,r)\subset C^+(s,r)$ for all $t\leq s$ and, by compactness, $C^+(t_0,r)=\bigcap\limits_{t>t_0} C^+(t,r)$. Thus, if $r\to\infty$ and $t\to t_0$, the inequality of the previous paragraph would imply that 
$$\Delta^+(t_0) > \Delta^+(t_0)+\eta,$$ 
a contradiction. 
\end{proof}

\subsection{Third step towards Moreira's theorem \ref{t.Gugu}: lower semi-continuity}

The main result of this subsection is the following theorem allowing us to ``approximate from inside'' $\Sigma_t$ by Gauss-Cantor sets. 

\begin{theorem}\label{t.lower-sc} Given $\eta>0$ and $3\leq t<5$ with $d(t):=HD(L\cap (-\infty,t))>0$, we can find $\delta>0$ and a Gauss-Cantor set $K(B)$ associated to $\Sigma(B)\subset\{1,2,3,4\}^{\mathbb{Z}}$ such that 
$$\Sigma(B)\subset \Sigma_{t-\delta} \quad \textrm{and} \quad HD(K(B))\geq (1-\eta)\Delta^+(t)$$
\end{theorem}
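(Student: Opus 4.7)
The plan is to build the alphabet $B$ from a large pigeonholed sub-collection of $C^+(t, m)$ (for $m$ large) whose elements share a common left-context $P$ and right-context $Q$ inside their $\Sigma_t$-witnesses, and then to confirm both the dimension bound $HD(K(B)) \geq (1-\eta)\Delta^+(t)$ (from the cardinality of $B$) and the inclusion $\Sigma(B) \subset \Sigma_{t-\delta}$ (from the common-context structure together with a compactness argument on the compact set $\Sigma(B)$). The assumption $t<5$ is crucial: it forces every entry of every sequence in $\Sigma_t$ to lie in $\{1,2,3,4\}$, so the pigeonhole alphabet will automatically be finite.

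\textbf{Counting, pigeonhole, and dimension.} Using $\Delta^+(t) = \lim_{m\to\infty}\tfrac{1}{m}\log\#C^+(t,m)$, fix $m$ with $\#C^+(t,m) \geq \exp(m(1-\eta/8)\Delta^+(t))$, and fix $\ell \in \mathbb{N}$ with $\ell\log 4/m$ much smaller than $\eta\Delta^+(t)$. For each $\alpha \in C^+(t,m)$, the hypothesis $I^+(\alpha) \cap K_t^+ \neq \emptyset$ provides a witness $\tilde\alpha \in \Sigma_t$ having $\alpha$ at positions $[0,|\alpha|-1]$, and both contexts $P^\alpha := \tilde\alpha|_{[-\ell,-1]}$ and $Q^\alpha := \tilde\alpha|_{[|\alpha|,|\alpha|+\ell-1]}$ live in $\{1,2,3,4\}^\ell$. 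Pigeonholing on the $\leq 16^\ell \cdot m$ possible triples $(P, Q, |\alpha|)$ yields a subcollection $B_0 \subset C^+(t,m)$ of constant length $m^*$ and constant context $(P, Q)$, still satisfying $|B_0| \geq \exp(m(1-\eta/4)\Delta^+(t))$. Set $B := \{P\alpha Q : \alpha \in B_0\}$. All words in $B$ have the same length $m^*+2\ell$ and by Exercise \ref{ex.subadditive} satisfy $r^+(P\alpha Q) = m + O(\ell)$, so the intervals $I^+(P\alpha Q)$ have comparable sizes $\asymp e^{-m-O(\ell)}$; the Bowen/Moran formula for Gauss-Cantor sets with approximately equal contractions then yields
\begin{equation*}
HD(K(B)) \geq \frac{\log|B|}{m + O(\ell)} \geq (1-\eta)\Delta^+(t).
\end{equation*}

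\textbf{Containment and main obstacle.} To check $\Sigma(B) \subset \Sigma_{t-\delta}$, write $\theta = \ldots\beta_{-1}\beta_0\beta_1\ldots \in \Sigma(B)$ with $\beta_i = P\alpha_i Q$ and bound $f(\sigma^n\theta)$ at each $n$. If $n$ lies at distance $\geq \ell/2$ from every joint, the radius-$\ell/2$ window around $n$ in $\theta$ agrees with the same window in $\tilde\alpha_i$, so continued-fraction tail estimates give $f(\sigma^n\theta) \leq t + O(2^{-\ell/2})$. Otherwise $n$ lies near a joint, where the local structure is of the fixed form $\ldots Q\cdot P\ldots$, so $f(\sigma^n\theta)$ is bounded by a quantity depending only on $(P,Q)$. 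The main obstacle is promoting the weak estimate $m(\theta)\leq t + O(2^{-\ell/2})$ to a strict bound $m(\theta) \leq t-\delta$: the natural remedy is to reinforce the pigeonhole by also recording the $\ell$-letter context on each side of the joint, and select $(P,Q)$ so that the periodic extension $\overline{P\alpha Q}$ of at least one $\alpha \in B_0$ lies in $\Sigma_t$. This guarantees that every local pattern occurring in $\Sigma(B)$ is already realized inside some bi-infinite sequence of $\Sigma_t$, giving the clean inclusion $\Sigma(B)\subset \Sigma_t$. One then invokes compactness of the subshift of finite type $\Sigma(B)$ together with upper semicontinuity of $m$ (cf. Proposition \ref{p.upper-sc}) to see that $\max_{\Sigma(B)} m$ is attained; the strict inequality $\max_{\Sigma(B)} m < t$ — which produces $\delta>0$ and is the most delicate point of the argument — is obtained by exploiting $\Delta^+(t)>0$, which guarantees enough flexibility in $B_0$ to discard any rare ``peak'' word $\alpha$ whose presence would force the maximum to equal $t$, without affecting the dimension estimate of the previous paragraph.
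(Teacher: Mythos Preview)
Your counting/pigeonhole step is fine: fixing a common context $(P,Q)$ and length, and bounding $HD(K(B))$ from below via $\log|B|/(m+O(\ell))$, does give $HD(K(B))\geq(1-\eta)\Delta^+(t)$. This part is close in spirit to the paper's cardinality estimates.

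The containment step, however, has a genuine gap that your remedies do not close. Your ``away from joints'' argument yields only $f(\sigma^n\theta)\leq t+O(2^{-\ell/2})$, which points the wrong way --- you need $\leq t-\delta$. Your two fixes both fail:
\begin{itemize}
\item Asking that some $\overline{P\alpha Q}\in\Sigma_t$ would at best witness the fixed word $QP$ near a joint, but a long subword of a general $\theta\in\Sigma(B)$ straddles \emph{several different} $\alpha_i$, and nothing guarantees such subwords occur in $\Sigma_t$. So you still get only $m(\theta)\leq t+O(2^{-\ell/2})$, never $m(\theta)\leq t$, let alone $\leq t-\delta$.
\item ``Discard rare peak words'' is not an argument: removing a word that realises $\max_{\Sigma(B)}m=t$ can produce a new $\theta$ with $m(\theta)=t$, and nothing forces termination with a large alphabet. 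The hypothesis $\Delta^+(t)>0$ supplies many words, but no mechanism for a \emph{strict} inequality on concatenations.
\end{itemize}

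The paper's route is structurally different and supplies exactly the missing mechanism. It concatenates $k$ blocks from $B_0:=C^+(t,r_0)$ into long words $\beta=(\beta_1,\ldots,\beta_k)\in\widetilde B$ (those still meeting $K_t^+$) and introduces \emph{good positions}: $j$ is (say) right-good for $\beta$ if there exist $\beta^{(1)},\beta^{(2)}\in\widetilde B$ sharing a long prefix with $\beta$ but whose continuations strictly \emph{straddle} $\beta$'s continuation in the continued-fraction order; left-good is symmetric. Since the straddling alternatives themselves lie in $\widetilde B$ (hence are witnessed in $\Sigma_t$), the value of $f$ near a good position is squeezed strictly below $t$ by a definite amount depending only on $B_0$ --- this is the source of $\delta>0$. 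A counting argument shows most positions of most $\beta$ are good; a further pigeonhole on prescribed adjacent blocks at spaced good positions produces $B=\pi_{a,b}(X)$ whose words concatenate consistently (the endpoint block-pairs match) so that every position of every $\theta\in\Sigma(B)$ lies near a good position. That straddling idea --- not context-matching --- is what your argument is missing.
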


This theorem allows us to derive the continuity statement in Moreira's theorem \ref{t.Gugu}:
\begin{corollary}\label{c.Gugu-continuity} $\Delta^-(t)=\Delta^+(t)$ is a continuous function of $t$ and $d(t)=\min\{1,2\cdot\Delta^+(t)\}$. 
\end{corollary}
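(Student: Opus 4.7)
The plan is to establish the corollary in three stages: the equality $\Delta^-(t)=\Delta^+(t)$, the dimension formula $d(t)=\min\{1,2\Delta^+(t)\}$, and continuity of $\Delta^+$.

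First I would prove $\Delta^+(t)=\Delta^-(t)$ by reversal symmetry. The involution $\iota((a_n)_{n\in\mathbb{Z}}):=(a_{-n})_{n\in\mathbb{Z}}$ preserves $\Sigma_t$ because the height $f$ is symmetric in the past and future coordinates, so $m\circ\iota=m$. It swaps $\pi^+$ with $\pi^-$ and sends a word $\alpha$ to its transpose $\alpha^T$; by Euler's Proposition \ref{p.Euler}, $|I^+(\alpha)|=|I^-(\alpha^T)|$, hence $r^-(\alpha^T)=r^+(\alpha)$ and $\#C^-(t,r)=\#C^+(t,r)$ for every $r$, yielding $\Delta^-(t)=\Delta^+(t)$ after passing to the limit.

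Next I would establish the formula. For the upper bound, any $\ell(\theta)\in L\cap(-\infty,t)$ with $t<5$ can be written along a realizing subsequence as $k+[0;\pi^+(\sigma^{n_j}\theta)]+[0;\pi^-(\sigma^{n_j}\theta)]$ with $k\in\{1,2,3,4\}$, so $L\cap(-\infty,t)\subset\bigcup_{k=1}^{4}(k+K_t^++K_t^-)$. Since Hausdorff dimension is bounded by box-counting dimension and subadditive under $(x,y)\mapsto x+y$, this gives $d(t)\le HD(K_t^+)+HD(K_t^-)\le\Delta^+(t)+\Delta^-(t)=2\Delta^+(t)$, and trivially $d(t)\le 1$. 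For the lower bound, if $d(t)>0$, then given $\eta>0$ Theorem \ref{t.lower-sc} furnishes $\delta>0$ and a Gauss-Cantor set $K(B)$ with $\Sigma(B)\subset\Sigma_{t-\delta}$ and $HD(K(B))\ge(1-\eta)\Delta^+(t)$. Every $\theta\in\Sigma(B)$ satisfies $\ell(\theta)\le m(\theta)\le t-\delta$, so $\ell(\Sigma(B))\subset L\cap(-\infty,t)$, and Proposition \ref{p.Lagrange-Cantor} yields
$$d(t)\ge HD(\ell(\Sigma(B)))=\min\{1,2HD(K(B))\}\ge\min\{1,2(1-\eta)\Delta^+(t)\}.$$
Letting $\eta\downarrow 0$ produces the desired inequality; the edge case $d(t)=0$ is dealt with by noting that any positive-dimension Gauss-Cantor set inside $\Sigma_t$ would produce a positive-dimension subset of $L\cap(-\infty,t]$, so $d(t)=0$ forces $\Delta^+(t)=0$ as well.

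Finally, for continuity, $\Delta^+$ is monotone non-decreasing (since $\Sigma_t$ grows with $t$) and upper semicontinuous by Proposition \ref{p.upper-sc}; together these give right-continuity at every $t$. For left-continuity at a point $t$ with $\Delta^+(t)>0$ (equivalently $d(t)>0$, by the formula just established), Theorem \ref{t.lower-sc} yields, for each $\eta>0$, some $\delta>0$ and a Gauss-Cantor set $K(B)\subset K_{t-\delta}^+$ with $HD(K(B))\ge(1-\eta)\Delta^+(t)$; hence $\Delta^+(t-\delta)\ge HD(K(B))\ge(1-\eta)\Delta^+(t)$, giving $\liminf_{s\uparrow t}\Delta^+(s)\ge\Delta^+(t)$. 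At points with $\Delta^+(t)=0$, monotonicity supplies left-continuity automatically. The main obstacle I anticipate is the lower bound at $\Delta^+(t)\ge 1/2$: there $d(t)$ saturates at $1$, so the continuity of $\Delta^+$ cannot simply be transferred from the continuity of $d$ stated in Theorem \ref{t.Gugu} and must instead be extracted directly from the Gauss-Cantor approximation provided by Theorem \ref{t.lower-sc}, together with the tacit bound $HD(K(B))\le\Delta^+(t-\delta)$ coming from $K(B)\subset K_{t-\delta}^+$ and the elementary comparison $HD\le\overline{\dim}_B$.
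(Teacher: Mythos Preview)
Your proof is correct and, for the dimension formula and for continuity, follows the same architecture as the paper: the upper bound $d(t)\le\min\{1,\Delta^+(t)+\Delta^-(t)\}$ via the product embedding, the lower bound via Theorem~\ref{t.lower-sc} and Proposition~\ref{p.Lagrange-Cantor}, and continuity by combining Proposition~\ref{p.upper-sc} with the lower-semicontinuity coming from Theorem~\ref{t.lower-sc}. The one genuine difference is your argument for $\Delta^+(t)=\Delta^-(t)$. You observe directly that the time-reversal involution $\iota$ satisfies $f\circ\iota=f$, hence preserves $\Sigma_t$ and swaps the stable and unstable projections; together with Euler's Proposition~\ref{p.Euler} this gives $\#C^-(t,r)=\#C^+(t,r)$ for every $r$ (in fact it gives $K_t^+=K_t^-$ outright). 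The paper instead extracts the equality from the chain $\Delta^-(t-\delta)\ge HD(K(B^T))=HD(K(B))\ge(1-\eta)\Delta^+(t)$, i.e.\ from Theorem~\ref{t.lower-sc} combined with Corollary~\ref{c.Euler}, and its symmetric counterpart. Your route is more elementary and does not require the approximation machinery just to establish the symmetry; the paper's route has the mild advantage of delivering the equality and the left lower-semicontinuity simultaneously. Your explicit use of monotonicity and your treatment of the edge case $d(t)=0$ (where the hypothesis of Theorem~\ref{t.lower-sc} is not available) are also more careful than the paper's presentation, which leaves these points implicit.
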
 

\begin{proof} By Corollary \ref{c.Euler} and Theorem \ref{t.lower-sc}, we have that 
$$\Delta^-(t-\delta)\geq HD(K(B^T)) = HD(K(B))\geq (1-\eta)\Delta^+(t).$$
Also, a ``symmetric'' estimate holds after exchanging the roles of $\Delta^-$ and $\Delta^+$. Hence, $\Delta^-(t)=\Delta^+(t)$. Moreover, the inequality above says that $\Delta^-(t)=\Delta^+(t)$ is a lower-semicontinuous function of $t$. Since we already know that $\Delta^+(t)$ is an upper-semicontinuous function of $t$ thanks to Proposition \ref{p.upper-sc}, we conclude that $t\mapsto \Delta^-(t)=\Delta^+(t)$ is continuous. Finally, by Proposition \ref{p.Lagrange-Cantor}, from $\Sigma(B)\subset\Sigma_{t-\delta}$, we also have that 
$$d(t-\delta)\geq HD(\ell(\Sigma(B))) = \min\{1,2\cdot HD(K(B))\}\geq (1-\eta)\min\{1,2\Delta^+(t)\}$$
Since $d(t)\leq \min\{1,\Delta^+(t)+\Delta^-(t)\}$ (because $\Sigma_t\subset\pi^-(\Sigma_t)\times\pi^+(\Sigma_t)$), the proof is complete. 
\end{proof}

Let us now sketch the construction of the Gauss-Cantor sets $K(B)$ approaching $\Sigma_t$ from inside. 

\begin{proof}[Sketch of proof of Theorem \ref{t.lower-sc}] Fix $r_0\in\mathbb{N}$ large enough so that 
$$\left|\frac{\log\#C^+(t,r)}{r} - \Delta^+(t)\right|<\frac{\eta}{80}\Delta^+(t)$$ 
for all $r\geq r_0$. 

Set $B_0:=C^+(t,r_0)$, $k=8(\# B_0)^2\lceil 80/\eta\rceil$ and 
$$\widetilde{B}:=\{\beta=(\beta_1,\dots,\beta_k):\beta_j\in B_0 \textrm{ and } I^+(\beta)\cap K_t^+\neq\emptyset\}\subset B_0^k$$ 

It is not hard to show that $\widetilde{B}$ has a significant cardinality in the sense that 
$$\#\widetilde{B} > 2 (\# B_0)^{(1-\tfrac{\eta}{40})k}$$ 
In particular, one can use this information to prove that $HD(K(\widetilde{B}))$ is not far from $\Delta^+(t)$, i.e. 
$$HD(K(\widetilde{B}))\geq (1-\frac{\eta}{20})\Delta^+(t)$$

Unfortunately, since we have no control on the values of $m$ on $\Sigma(\widetilde{B})$, there is no guarantee that $\Sigma(\widetilde{B})\subset\Sigma_{t-\delta}$ for some $\delta>0$. 

We can overcome this issue with the aid of the notion of \emph{left-good} and \emph{right-good} positions. More concretely, we say that $1\leq j\leq k$ is a right-good position of $\beta=(\beta_1,\dots,\beta_k)\in\widetilde{B}$ whenever there are two elements $\beta^{(s)}=\beta_1\dots\beta_j\beta_{j+1}^{(s)}\dots\beta_k^{(s)}\in\widetilde{B}$, $s\in\{1,2\}$ such that 
$$[0;\beta_j^{(1)}]<[0;\beta_j]<[0;\beta_j^{(2)}]$$ 
Similarly, $1\leq j\leq k$ is a left-good position $\beta=(\beta_1,\dots,\beta_k)\in\widetilde{B}$ whenever there are two elements $\beta^{(s)}=\beta_1\dots\beta_j\beta_{j+1}^{(s)}\dots\beta_k^{(s)}\in\widetilde{B}$, $s\in\{3,4\}$ such that 
$$[0;(\beta_j^{(3)})^T]<[0;\beta_j^T]<[0;(\beta_j^{(2)})^T]$$ 
Furthermore, we say that $1\leq j\leq k$ is a \emph{good position}  of $\beta=(\beta_1,\dots,\beta_k)\in\widetilde{B}$ when it is both a left-good and a right-good position. 

Since there are at most two choices of $\beta_j\in B_0$ when $\beta_1,\dots,\beta_{j-1}$ are fixed and $j$ is a right-good position, one has that the subset 
$$\mathcal{E}:=\{\beta\in\widetilde{B}:\beta \textrm{ has } 9k/10 \textrm{ good positions (at least)}\}$$ of \emph{excellent} words in $\widetilde{B}$ has cardinality 
$$\#\mathcal{E} > \frac{1}{2} \#\widetilde{B} > (\# B_0)^{(1-\tfrac{\eta}{40})k}$$ 

We \emph{expect} the values of $m$ on $\Sigma(\mathcal{E})$ to \emph{decrease} because excellent words have many good positions. Also, the Hausdorff dimension of $K(\mathcal{E})$ is not far from $\Delta^+(t)$ thanks to the estimate above on the cardinality of $\mathcal{E}$. However, there is no reason for $\Sigma(\mathcal{E})\subset\Sigma_{t-\delta}$ for some $\delta>0$ because an \emph{arbitrary} concatenation of words in $\mathcal{E}$ might not belong to $\Sigma_t$. 

At this point, the idea is to build a complete shift $\Sigma(B)\subset\Sigma_{t-\delta}$ from $\mathcal{E}$ with the following combinatorial argument. Since $\beta=(\beta_1,\dots,\beta_k)\in\mathcal{E}$ has $9k/10$ good positions, we can find good positions $1\leq i_1\leq i_2\leq\dots\leq i_{\lceil 2k/5\rceil}\leq k-1$ such that $i_s+2\leq i_{s+1}$ for all $1\leq s\leq\lceil 2k/5\rceil-1$ and $i_s+1$ are also good positions for all $1\leq s\leq \lceil 2k/5\rceil$. Because $k:=8(\# B_0)^2\lceil 80/\eta\rceil$, the pigeonhole principle reveals that we can choose positions $j_1\leq \dots\leq j_{3(\# B_0)^2}$ and words $\widehat{\beta}_{j_1}, \widehat{\beta}_{j_1+1},\dots, \widehat{\beta}_{j_{3(\# B_0)^2}}, \widehat{\beta}_{j_{3(\# B_0)^2}+1}\in B_0$ such that $j_s+2\lceil 80/\eta\rceil\leq j_{s+1}$ for all $s<3(\# B_0)^2$ and the subset 
$$X=\{(\beta_1,\dots,\beta_k)\in\mathcal{E}: j_s, j_s+1 \textrm{ are good positions and } \beta_{j_s}=\widehat{\beta}_{j_s}, \beta_{j_s+1}=\widehat{\beta}_{j_s+1} \,\forall\,\,s\leq 3(\# B_0)^2 \}$$ 
of excellent words with prescribed subwords $\widehat{\beta}_{j_s}$, $\widehat{\beta}_{j_s+1}$ at the good positions $j_s$, $j_s+1$ has cardinality 
$$\#X > (\# B_0)^{(1-\tfrac{\eta}{20})k}$$ 
Next, we convert $X$ into the alphabet $B$ of an appropriate complete shift with the help of the projections $\pi_{a,b}:X\to B_0^{j_b-j_a}$, $\pi_{a,b}(\beta_1,\dots,\beta_k) = (\beta_{j_a+1},\beta_{j_a+2},\dots,\beta_{j_b})$. More precisely, an elementary counting argument shows that we can take $1\leq a<b\leq 3(\# B_0)^2$ such that $\widehat{\beta}_{j_a}=\widehat{\beta}_{j_b}$, $\widehat{\beta}_{j_a+1} = \widehat{\beta}_{j_b+1}$, and the image $\pi_{a,b}(X)$ of some projection $\pi_{a,b}$ has a significant cardinality  
$$\#\pi_{a,b}(X) > (\# B_0)^{(1-\tfrac{\eta}{4})(j_b-j_a)}$$ 
From these properties, we get an alphabet $B=\pi_{a,b}(X)$ whose words concatenate in an appropriate way (because $\widehat{\beta}_{j_a}=\widehat{\beta}_{j_b}$, $\widehat{\beta}_{j_a+1} = \widehat{\beta}_{j_b+1}$), the Hausdorff dimension of $K(B)$ is $HD(K(B))>(1-\eta)\Delta^+(t)$ (because $\# B >(\# B_0)^{(1-\tfrac{\eta}{4})(j_b-j_a)}$ and $j_b-j_a>2\lceil\tfrac{80}{\eta}\rceil$), and $\Sigma(B)\subset\Sigma_{t-\delta}$ for some $\delta>0$ (because the features of good positions forces the values of $m$ on $\Sigma(B)$ to decrease). This completes our sketch of proof. 
\end{proof}

\subsection{End of proof of Moreira's theorem \ref{t.Gugu}}

By Corollary \ref{c.Gugu-continuity}, the function 
$$t\mapsto d(t)=HD(L\cap (-\infty,t))$$ 
is continuous. Moreover, an inspection of the proof of Corollary \ref{c.Gugu-continuity} shows that we have also proved the equality $HD(M\cap(-\infty,t)) = HD(L\cap(-\infty,t))$. 

Therefore, our task is reduced to prove that $d(3+\varepsilon)>0$ for all $\varepsilon>0$ and $d(\sqrt{12})=1$. 

The fact that $d(3+\varepsilon)>0$ for any $\varepsilon$ uses explicit sequences $\theta_m\in\{1,2\}^{\mathbb{Z}}$ such that $\lim\limits_{m\to\infty} m(\theta_m) = 3$ in order to exhibit non-trivial Cantor sets in $M\cap (-\infty,3+\varepsilon)$. More precisely, consider\footnote{This choice of $\theta_m$ is motivated by the discussion in Chapter 1 of Cusick-Flahive book \cite{CF}.} the periodic sequences $$\theta_m:=\overline{2\underbrace{1\dots 1}_{2m \textrm{ times}} 2}$$ 
where $\overline{a_1\dots a_k}:=\dots a_1\dots a_k \,\, a_1\dots a_k\dots$. Since the sequence $\theta_{\infty} = \overline{1}, 2, 2, \overline{1}$ has the property that $m(\theta_{\infty}) = [2; \overline{1}]+[0;2,\overline{1}] =3$, and $|[a_0;a_1,\dots, a_n, b_1,\dots]-[a_0;a_1,\dots,a_n,c_1,\dots]|<\frac{1}{2^{n-1}}$ in general\footnote{See Lemma 2 in Chapter 1 of \cite{CF}.}, we have that the alphabet $B_m$ consisting of the two words $2\underbrace{1\dots 1}_{2m \textrm{ times}} 2$ and $2\underbrace{1\dots 1}_{2m+2 \textrm{ times}} 2$ satisfies 
$$\Sigma(B_m)\subset \Sigma_{3+\frac{1}{2^m}}$$ 
Thus, $d(3+\tfrac{1}{2^m})=HD(M\cap(-\infty, 3+\frac{1}{2^m}))\geq HD(\Sigma(B_m)) = 2\cdot HD(K(B_m))>0$ for all $m\in\mathbb{N}$. 

Finally, the fact that $d(\sqrt{12})=1$ follows from Corollary \ref{c.Gugu-continuity} and Remark \ref{r.JP}. Indeed, Perron showed that $m(\theta)\leq\sqrt{12}$ if and only if $\theta\in\{1,2\}^{\mathbb{Z}}$ (see the proof of Lemma 7 in Chapter 1 of Cusick-Flahive book \cite{CF}). Thus, $K_{\sqrt{12}}^+ = C(2)$. By Corollary \ref{c.Gugu-continuity}, it follows that 
$$d(\sqrt{12})=\min\{1,2\cdot \Delta^+(\sqrt{12})\} = \min\{1,2\cdot HD(C(2))\}$$ 
Since Remark \ref{r.JP} tells us that $HD(C(2))>1/2$, we conclude that $d(\sqrt{12})=1$. 


\appendix

\section{Proof of Hurwitz theorem}\label{a.Hurwitz}

Given $\alpha\notin\mathbb{Q}$, we want to show that the inequality 
$$\left|\alpha-\frac{p}{q}\right|\leq\frac{1}{\sqrt{5}q^2}$$ 
has infinitely many rational solutions. 

In this direction, let $\alpha=[a_0;a_1,\dots]$ be the continued fraction expansion of $\alpha$ and denote by $[a_0;a_1,\dots,a_n] = p_n/q_n$. We affirm that, for every $\alpha\notin\mathbb{Q}$ and every $n\geq 1$, we have 
$$\left|\alpha-\frac{p}{q}\right|<\frac{1}{\sqrt{5}q^2}$$ 
for some $\frac{p}{q}\in\{\frac{p_{n-1}}{q_{n-1}}, \frac{p_n}{q_n}, \frac{p_{n+1}}{q_{n+1}}\}$. 

\begin{remark} Of course, this last statement provides infinitely many solutions to the inequality $\left|\alpha-\frac{p}{q}\right|\leq\frac{1}{\sqrt{5}q^2}$. So, our task is reduced to prove the affirmation above. 
\end{remark} 

The proof of the claim starts by recalling Perron's Proposition \ref{p.Perron}:
$$\alpha-\frac{p_n}{q_n} = \frac{(-1)^n}{(\alpha_{n+1}+\beta_{n+1})q_n^2}$$ 
where $\alpha_{n+1}:=[a_{n+1};a_{n+2},\dots]$ and $\beta_{n+1} = 
\frac{q_{n-1}}{q_n} = [0;a_n,\dots,a_1]$. 

For the sake of contradiction, suppose that the claim is false, i.e., there exists $k\geq 1$ such that \begin{equation}\label{e.A1}
\max\{(\alpha_k+\beta_k), (\alpha_{k+1}+\beta_{k+1}), (\alpha_{k+2}+\beta_{k+2})\}\leq \sqrt{5}
\end{equation}

Since $\sqrt{5}<3$ and $a_m\leq\alpha_m+\beta_m$ for all $m\geq 1$, it follows from \eqref{e.A1} that 
\begin{equation}\label{e.A2}
\max\{a_k,a_{k+1},a_{k+2}\}\leq 2
\end{equation} 

If $a_m=2$ for some $k\leq m\leq k+2$, then \eqref{e.A2} would imply that $\alpha_m+\beta_m\geq 2+[0;2,1] = 2+\frac{1}{3}>\sqrt{5}$, a contradiction with our assumption \eqref{e.A1}. 

So, our hypothesis \eqref{e.A1} forces 
\begin{equation}\label{e.A3}
a_k=a_{k+1}=a_{k+2}=1
\end{equation} 

Denoting by $x=\frac{1}{\alpha_{k+2}}$ and $y=\beta_{k+1} = q_{k-1}/q_k\in\mathbb{Q}$, we have from \eqref{e.A3} that 
$$\alpha_{k+1}=1+x, \quad \alpha_k = 1+\frac{1}{1+x}, \quad \beta_{k} = \frac{1}{y}-1, \quad \beta_{k+2} = \frac{1}{1+y}$$

By plugging this into \eqref{e.A1}, we obtain  
\begin{equation}\label{e.A4}
\max\left\{\frac{1}{1+x}+\frac{1}{y}, 1+x+y, \frac{1}{x}+\frac{1}{1+y}\right\}\leq \sqrt{5}
\end{equation}

On one hand, \eqref{e.A4} implies that 
$$\frac{1}{1+x}+\frac{1}{y}\leq \sqrt{5} \quad \textrm{and} \quad 1+x\leq \sqrt{5}-y.$$
Thus, 
$$\frac{\sqrt{5}}{y(\sqrt{5}-y)} = \frac{1}{\sqrt{5}-y}+\frac{1}{y}\leq \frac{1}{1+x}+\frac{1}{y}\leq\sqrt{5},$$
and, \emph{a fortiori}, $y(\sqrt{5}-y)\geq 1$, i.e., 
\begin{equation}\label{e.A5}
\frac{\sqrt{5}-1}{2}\leq y\leq \frac{\sqrt{5}+1}{2}
\end{equation}

On the other hand, \eqref{e.A4} implies that 
$$x\leq \sqrt{5}-1-y \quad \textrm{and} \quad \frac{1}{x}+\frac{1}{1+y}\leq \sqrt{5}.$$ 
Hence, 
$$\frac{\sqrt{5}}{(1+y)(\sqrt{5}-1-y)} = \frac{1}{\sqrt{5}-1-y}+\frac{1}{1+y}\leq \frac{1}{x}+\frac{1}{1+y}\leq\sqrt{5},$$
and, \emph{a fortiori}, $(1+y)(\sqrt{5}-1-y)\geq 1$, i.e., 
\begin{equation}\label{e.A6}
\frac{\sqrt{5}-1}{2}\leq y\leq \frac{\sqrt{5}+1}{2}
\end{equation}

It follows from \eqref{e.A5} and \eqref{e.A6} that $y=(\sqrt{5}-1)/2$, a contradiction because $y=\beta_{k+1}= q_{k-1}/q_k\in\mathbb{Q}$. This completes the argument. 

\section{Proof of Euler's remark}\label{a.Euler}

Denote by $[0; a_1, a_2,\dots, a_n] = \frac{p(a_1,\dots,a_n)}{q(a_1,\dots,a_n)} = \frac{p_n}{q_n}$. It is not hard to see that  
$$q(a_1)=a_1, \quad q(a_1,a_2) = a_1a_2+1, \quad q(a_1,\dots,a_n) = a_n q(a_1,\dots,a_{n-1}) + q(a_1,\dots,a_{n-2}) \,\,\,\,\forall\,\,n\geq 3.$$

From this formula, we see that $q(a_1,\dots,a_n)$ is a sum of the following products of elements of $\{a_1,\dots,a_n\}$. First, we take the product $a_1\dots a_n$ of all $a_i$'s. Secondly, we take all products obtained by removing any pair $a_i a_{i+1}$ of adjacent elements. Then, we iterate this procedure until no pairs can be omitted (with the convention that if $n$ is even, then the empty product gives $1$). This rule to describe $q(a_1,\dots,a_n)$ was discovered by Euler. 

It follows immediately from Euler's rule that $q(a_1,\dots,a_n) = q(a_n,\dots,a_1)$. This proves Proposition \ref{p.Euler}. 


\bibliographystyle{amsplain}

\begin{thebibliography}{99}

\bibitem{Ar}
{\bf P. Arnoux},
\newblock{\it Le codage du flot g\'eod\'esique sur la surface modulaire},
\newblock Enseign. Math. (2) 40 (1994), no. 1-2, 29--48.

\bibitem{CMM}
{\bf A. Cerqueira, C. Matheus and C. G. Moreira}, 
\newblock{\it Continuity of Hausdorff dimension across generic dynamical Lagrange and Markov spectra}, 
\newblock Preprint (2016) available at arXiv:1602.04649.

\bibitem{CF}
{\bf T. Cusick and M. Flahive}, 
\newblock{\it The Markoff and Lagrange spectra},
\newblock Mathematical Surveys and Monographs, 30. American Mathematical Society, Providence, RI, 1989. x+97 pp.

\bibitem{Di}
{\bf P. G. Dirichlet},
\newblock{\it Verallgemeinerung eines Satzes aus der Lehre von den Kettenbr\"uchen nebst einigen Anwendungen auf die Theorie der Zahlen},
\newblock p.633-638
Bericht \"uber die Verhandlungen der K\"oniglich Preussischen Akademie der Wissenschaften. Jahrg. 1842, S. 93-95

\bibitem{Falconer}
{\bf K. Falconer}, 
\newblock{\it The geometry of fractal sets}, 
\newblock Cambridge Tracts in Mathematics, 85. Cambridge University Press, Cambridge, 1986. xiv+162 pp.

\bibitem{Fr68}
{\bf G. Freiman},
\newblock{\it Non-coincidence of the spectra of Markov and of Lagrange},
\newblock Mat. Zametki 3 1968 195--200. 

\bibitem{Fr73}
{\bf G. Freiman},
\newblock{\it Non-coincidence of the spectra of Markov and of Lagrange},
\newblock Number-theoretic studies in the Markov spectrum and in the structural theory of set addition (Russian), pp. 10--15, 121--125. Kalinin. Gos. Univ., Moscow, 1973.

\bibitem{Fr75}
{\bf G. Freiman},
\newblock{\it Diophantine approximations and the geometry of numbers (Markov's problem)},
\newblock Kalinin. Gosudarstv. Univ., Kalinin, 1975. 144 pp.

\bibitem{Hall}
{\bf M. Hall},  
\newblock{\it On the sum and product of continued fractions},
\newblock Ann. of Math. (2) 48, (1947). 966--993.

\bibitem{He}
{\bf D. Hensley}, 
\newblock{\it Continued fraction Cantor sets, Hausdorff dimension, and functional analysis}, 
\newblock J. Number Theory 40 (1992), no. 3, 336--358.

\bibitem{HMU}
{\bf P. Hubert, L. Marchese and C. Ulcigrai}, 
\newblock{\it Lagrange spectra in Teichm\"uller dynamics via renormalization}, 
\newblock Geom. Funct. Anal. 25 (2015), no. 1, 180--255.

\bibitem{Hu}
{\bf A. Hurwitz},
\newblock{\it Ueber die angen\"aherte Darstellung der Irrationalzahlen durch rationale Br\"uche},
\newblock Math. Ann. 39 (1891), no. 2, 279--284.

\bibitem{JePo1}
{\bf O. Jenkinson and M. Pollicott}, 
\newblock{\it Computing the dimension of dynamically defined sets: E2 and bounded continued fractions}, 
\newblock Ergodic Theory Dynam. Systems 21 (2001), no. 5, 1429--1445.

\bibitem{JePo2}
{\bf O. Jenkinson and M. Pollicott}, 
\newblock{\it Rigorous effective bounds on the Hausdorff dimension of continued fraction Cantor sets: a hundred decimal digits for the dimension of $E_2$}, 
\newblock Preprint (2016) available at arXiv:1611.09276.

\bibitem{Kh}
{\bf A. Khinchin}, 
\newblock {\it Continued fractions}, 
\newblock The University of Chicago Press, Chicago, Ill.-London 1964 xi+95 pp.

\bibitem{Le}
{\bf P. L\'evy},
\newblock {\it Sur le d\'eveloppement en fraction continue d'un nombre choisi au hasard}, 
\newblock Compositio Math. 3 (1936), 286--303.

\bibitem{Ma}
{\bf A. Markoff},
\newblock{\it Sur les formes quadratiques binaires ind\'efinies},
\newblock Math. Ann. 17 (1880), no. 3, 379--399.

\bibitem{Marstrand}
{\bf J. Marstrand},
\newblock{\it Some fundamental geometrical properties of plane sets of fractional dimensions}, 
\newblock Proc. London Math. Soc. (3) 4, (1954). 257--302.

\bibitem{Mahler}
{\bf K. Mahler},
\newblock{\it On lattice points in n-dimensional star bodies. I. Existence theorems},
\newblock Proc. Roy. Soc. London. Ser. A. 187, (1946). 151--187.

\bibitem{McCMa}
{\bf H. McCluskey and A. Manning}, 
\newblock{\it Hausdorff dimension for horseshoes},
\newblock Ergodic Theory Dynam. Systems 3 (1983), no. 2, 251--260.

\bibitem{Moreira}
{\bf C. G. Moreira}, 
\newblock{\it Geometric properties of the Markov and Lagrange spectra}, 
\newblock Preprint (2016) available at arXiv:1612.05782.

\bibitem{Moreira-dim}
{\bf C. G. Moreira},
\newblock{\it Geometric properties of images of cartesian products of regular Cantor sets by differentiable real maps}, 
\newblock Preprint (2016) available at arXiv:1611.00933.

\bibitem{MR}
{\bf C. G. Moreira and S. Roma\~na}, 
\newblock{\it On the Lagrange and Markov dynamical spectra}, 
\newblock Ergodic Theory and Dynamical Systems (2016), pp. 1--22. doi: 10.1017/etds.2015.121.
\newblock 

\bibitem{MY01}
{\bf C. G. Moreira and J.-C. Yoccoz}, 
\newblock{\it Stable intersections of regular Cantor sets with large Hausdorff dimensions},
\newblock Ann. of Math. (2) 154 (2001), no. 1, 45--96.

\bibitem{MY10}
{\bf C. G. Moreira and J.-C. Yoccoz}, 
\newblock{\it Tangences homoclines stables pour des ensembles hyperboliques de grande dimension fractale},
\newblock Ann. Sci. \'Ec. Norm. Sup\'er. (4) 43 (2010), no. 1, 1--68. 

\bibitem{Newhouse}
{\bf S. Newhouse}, 
\newblock{\it The abundance of wild hyperbolic sets and nonsmooth stable sets for diffeomorphisms}, 
\newblock Inst. Hautes \'Etudes Sci. Publ. Math. No. 50 (1979), 101--151. 

\bibitem{PP}
{\bf J. Parkkonen and F. Paulin}, 
\newblock{\it Prescribing the behaviour of geodesics in negative curvature}, 
\newblock Geom. Topol. 14 (2010), no. 1, 277--392. 

\bibitem{WM}
{\bf D. Witte Morris}, 
\newblock {\it Ratner's theorems on unipotent flows}, 
\newblock Chicago Lectures in Mathematics. University of Chicago Press, Chicago, IL, 2005. xii+203 pp.

\bibitem{Za}
{\bf D. Zagier},
\newblock{\it On the number of Markoff numbers below a given bound},
\newblock Math. Comp. 39 (1982), no. 160, 709--723.

\bibitem{Za-RH}
{\bf D. Zagier},
\newblock{\it Eisenstein series and the Riemann zeta function},
\newblock Automorphic forms, representation theory and arithmetic (Bombay, 1979), pp. 275–301,
Tata Inst. Fund. Res. Studies in Math., 10, Tata Inst. Fundamental Res., Bombay, 1981.


\end{thebibliography}

\end{document}